\newtheorem{theorem}{Theorem}
\newtheorem{proposition}{Proposition}
\newtheorem{lemma}{Lemma}
\newcommand{\R}{{\mathbb R}}
\newcommand{\Z}{{\mathbb Z}}
\newcommand{\set}[2]{ \left\{ #1 \ \left| \ #2 \right. \right\} }
\newtheorem*{theorem*}{Theorem}
\newcommand{\GL}[1]{\mathrm{GL}_{#1}}
\newcommand{\ang}[1]{\left< #1 \right>}
\newcommand{\tr}{\mathrm{tr}}
\newcommand{\SL}[1]{\mathrm{SL}_{#1}}
\newcommand{\li}[1] {{{}^{#1}\Sigma}}
\newcommand{\ri}[1] {\Sigma^{#1}} 
\newcommand{\inc}{\Sigma}
\newcommand{\blw}{\mathrm{W}}
\newcommand{\ms}{\cdot {\boldsymbol \partial}}
\newcommand{\mso}{{\boldsymbol \partial}}
\newcommand{\MS}{{\mathbb M}}
\title{$L^p$-improving estimates for Radon-like operators and the Kakeya-Brascamp-Lieb inequality}
\author{Philip T. Gressman}
\date{\today}
\begin{document}
\maketitle
\begin{abstract}
This paper considers the problem of establishing $L^p$-improving inequalities for Radon-like operators in intermediate dimensions (i.e., for averages overs submanifolds which are neither curves nor hypersurfaces). Due to limitations in existing approaches, previous results in this regime are comparatively sparse and tend to require special numerical relationships between the dimension $n$ of the ambient space and the dimension $k$ of the submanifolds. This paper develops a new approach to this problem based on a continuum version of the Kakeya-Brascamp-Lieb inequality, established by Zhang \cite{zhang2018} and extended by Zorin-Kranich \cite{zk2018}, and on recent results for geometric nonconcentration inequalities \cite{gressman2018}. As an initial application of this new approach, this paper establishes sharp restricted strong type $L^p$-improving inequalities for certain model quadratic submanifolds in the range $k < n \leq 2k$.
\end{abstract}

\tableofcontents

\section{Introduction}
\subsection{Background and statement of results}

$L^p$-improving estimates for Radon-like operators have been a fundamental object of study in harmonic analysis for many decades and find applications in a number of interesting problems in PDEs and elsewhere (see, e.g., \cite{patel2018}). Since the late 1990s, a favored approach has been a combinatorial one, pioneered by Christ \cite{christ1998}, who was inspired by Bourgain \cite{bourgain1991,bourgain1986}, Wolff \cite{wolff1995,wolff1997}, and Schlag \cite{schlag1997}, as well as others.  As this approach is commonly executed, it involves the construction of a so-called ``inflation map'' which iterates the geometry of the operator in much the same way that a $TT^*$ argument would. A key feature of the inflation map is that the dimension of its domain (usually comprised of products of fibers) and its target space must generally match and, when they do, the map must have a Jacobian determinant which is nonzero on a dense open set. The difficulty of completing a proof, once the inflation map has been obtained, boils down to a delicate understanding of how the degeneracy of the Jacobian determinant leads to certain integral inequalities.

A principal limitation of this approach is that inflation maps are often difficult to construct or analyze unless the dimension and the codimension of the underlying submanifolds happen to satisfy simple numerical relationships, e.g., when one is an integer multiple of the other. For this reason, there are many gaps in the literature for Radon-like operators of intermediate dimension (being neither curves nor hypersurfaces) when the dimension and codimension are generically chosen.

In this paper, we introduce a new approach to this problem which allows one to circumvent the need for an explicit inflation map. The overall philosophy of the proof is still fundamentally combinatorial and very deeply connected to earlier approaches, but incorporates recent ideas including the so-called Kakeya-Brascamp-Lieb inequality, proved by Zhang \cite{zhang2018} and further developed by Zorin-Kranich \cite{zk2018}, and nonconcentration inequalities \cite{gressman2018}. The result is a significant shift in the structure of the argument which removes a number of important barriers and gives a unified framework which applies across a number of situations with wildly different inflation maps (or no known inflation map at all).

Central to this approach is a new understanding of the Brascamp-Lieb constant. To define it in a form which is most suitable for the present purposes, let $m$, $n$, and $k$ be positive integers with $n > k$ and suppose that $\pi_1,\ldots,\pi_m$ are linear maps from $\R^n$ to $\R^{n-k}$. Fix $p := \frac{n}{m(n-k)}$. Let $\blw(\{\pi_j\}_{j=1}^m)$, which will be called the Brascamp-Lieb weight associated to the maps $\{\pi_j\}_{j=1}^m$, be defined to equal the largest nonnegative real number such that
\begin{equation} \blw(\{\pi_j\}_{j=1}^m) \int_{\R^n} \left[ \prod_{j=1}^m f_j( \pi_j x ) \right]^{p} dx  \leq \left[ \prod_{j=1}^m \int_{\R^{n-k}}  f_j \right]^p \label{datadef} \end{equation}
holds for all nonnegative measurable functions $f_j$ on $\R^{n-k}$, $j=1,\ldots,m$.

At the greatest level of generality, the results of this paper are simplest to state for Radon-like operators which are defined in terms of an incidence relation $\Sigma$ which is itself understood to be the zero set of a defining function $\rho$. More precisely,
let $\Omega \subset \R^n \times \R^n$, and let  $\rho : \Omega \rightarrow \R^{n-k}$ be a smooth function such that at every point $(x,y) \in \Omega$ such that $\rho(x,y) = (\rho_1(x,y),\ldots,\rho_{n-k}(x,y)) = 0$, the  matrices
\begin{equation}  D_x \rho := \left[ \begin{array}{ccc} 
\frac{\partial \rho_{1}}{\partial x_1} & \cdots & \frac{\partial \rho_{1}}{\partial x_n} \\
\vdots & \ddots & \vdots \\
\frac{\partial \rho_{n-k}}{\partial x_1} & \cdots & \frac{\partial \rho_{n-k}}{\partial x_n} 
\end{array} \right]  \mbox{ and } D_y \rho :=  \left[ \begin{array}{ccc} 
\frac{\partial \rho_{1}}{\partial y_1} & \cdots & \frac{\partial \rho_{1}}{\partial y_n} \\
\vdots & \ddots & \vdots \\
\frac{\partial \rho_{n-k}}{\partial y_1} & \cdots & \frac{\partial \rho_{n-k}}{\partial y_n} 
\end{array} \right] \label{incmat} \end{equation}
(which will be called the left and right derivative matrices of $\rho$, respectively) both have full rank $n-k$. We call the set $\inc := \set{ (x,y) \in \Omega}{ \rho(x,y) = 0}$ the incidence relation associated to $\rho$ and call $\rho$ a defining function of the incidence relation $\inc \subset \Omega$. By virtue of the Implicit Function Theorem, the sets 
\[ \li{x} := \set{ y \in \R^n}{ (x,y) \in \Omega \mbox{ and } \rho(x,y) = 0} \]
and
\[ \ri{y} := \set{ x \in \R^n}{ (x,y) \in \Omega \mbox{ and } \rho(x,y) = 0} \]
are embedded $k$-dimensional submanifolds of $\R^n$ for any values of the parameters $x$ or $y$, respectively.  The incidence relation $\inc$ will be called left-algebraic of degree $d$ when for each $y$ such that $\ri{y}$ is nonempty, $\ri{y}$ is contained in a $k$-dimensional affine algebraic variety of degree at most $d$ (where we do not distinguish between affine algebraic sets and affine algebraic varieties and do not require irreducibility).  It is also important to define a canonical measure $d \sigma$ on each $\li{x}$ by means of the formula
\begin{equation} \int_{\li{x}} f d \sigma := \int_{\li{x}} f(y) \frac{d \mathcal H^k(y)}{\det (D_y \rho(x,y) (D_y \rho (x,y))^T)^{1/2}}, \label{themeasure} \end{equation}
where $d \mathcal H^k$ is the usual $k$-dimensional Hausdorff measure restricted to $\li{x}$. Analogous measures on $\ri{y}$ may be defined as well, but will not be needed. 

The first main result of this paper is the following continuum version of the Kakeya-Brascamp-Lieb inequality:
\begin{theorem}
Suppose $\inc$ is a left-algebraic incidence relation of degree $d$ with defining function $\rho$. 
Then for any nonnegative Lebesgue integrable functions $f_1,\ldots,f_m$ on $\R^n$, \label{kakeyathm}
\begin{equation} 
\begin{aligned} \int_{\R^n} \left[ \int_{\li{x}} \! \! \!  \! \cdots \int_{\li{x}} \left[\blw(\{D_x \rho(x,y_j)\}_{j=1}^m) \right]^{\frac{1}{p}}  \prod_{j=1}^m f_j(y_j) ~ d \sigma(y_1) \cdots d \sigma(y_m) \right]^{p} \! \!  dx & \\ \leq  C \prod_{j=1}^m  \left( \int f_j  \right)^{p} & \end{aligned} \label{kakeya} \end{equation}
for some $C < \infty$ depending only on $n$, $m$, and $d$, where $\blw(\{D_x \rho(x,y_j)\}_{j=1}^m)$ is the constant as defined by \eqref{datadef} when $\pi_j := D_x \rho(x,y_j)$ for each $j=1,\ldots,m$.
\end{theorem}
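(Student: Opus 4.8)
\emph{Strategy.} The plan is to deduce \eqref{kakeya} from the discrete Kakeya--Brascamp--Lieb inequality of Zorin-Kranich \cite{zk2018} by discretizing at a small dyadic scale $\delta$, using the left-algebraic hypothesis only to verify the combinatorial nonconcentration input that \cite{zk2018} requires. By standard reductions --- monotone convergence, exhausting $\{x : \li x \ne \emptyset\}$ and the supports of the $f_j$ by compact sets, and a partition of unity --- one may assume that $f_1,\dots,f_m$ are bounded with small compact support and that $\rho$, its derivative matrices, and the implicit-function-theorem graphing of $\li x$ are all uniformly controlled on the relevant region. The choice $p=\frac{n}{m(n-k)}$ is precisely the scaling identity $\sum_{j=1}^m p(n-k)=n$ of Bennett--Carbery--Christ--Tao, so that $\blw$ is the reciprocal Brascamp--Lieb constant and \eqref{kakeya} is invariant under isotropic rescaling; this is what makes the discretization scale-uniform.

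\emph{Discretization.} Replace each $f_j$ by its average $f_j^\delta$ over the $\delta$-grid. The Jacobian normalization in \eqref{themeasure} makes $d\sigma$ the coarea measure for the level sets of $\rho(x,\cdot)$, so that integrals $\int_{\li x} g\,d\sigma$ are recovered, in the limit $\delta\to 0$, from the thickened Lebesgue integrals $\delta^{-(n-k)}\int_{N_\delta(\li x)} g\,dy$, where $N_\delta$ denotes the $\delta$-tubular neighborhood. Partitioning $N_\delta(\li x)$ into $\delta$-cubes $Q$, each such cube is given the $(n-k)$-plane $\Pi_{x,Q}$ equal to the row space of $\pi_{x,Q}:=D_x\rho(x,y_{x,Q})$, where $y_{x,Q}$ is the center of $\li x\cap 2Q$; this produces a finite collection of $\delta$-cubes carrying attached $k$-plane directions, indexed by pairs $(x,Q)$ with $x$ in a $\delta$-net $\mathcal N$. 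After this reduction the inner $m$-fold integral in \eqref{kakeya} becomes, up to a multiplicative error tending to $1$, an $m$-fold sum over such cubes weighted by $\blw(\{\pi_{x,Q_j}\})^{1/p}$, and the homogeneities balance exactly because $p=\frac{n}{m(n-k)}$.

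\emph{The role of the algebraic hypothesis.} The discrete inequality of \cite{zk2018} applies to this cube--direction family provided one controls the multiplicity with which a given (cube, direction) pair is produced as $x$ ranges over $\mathcal N$ and, more generally, verifies that the family obeys the polynomial-Wolff-type nonconcentration axioms used in \cite{zk2018}. This is where left-algebraicity enters: if $x_1,\dots,x_M\in\mathcal N$ all produce essentially the same pair located near a point $y_0$, then each $x_i$ lies in $N_{C\delta}(\ri{y_0})$, and since $\ri{y_0}$ is contained in a $k$-dimensional affine variety of degree at most $d$, a Wongkew-type estimate bounds the number of $\delta$-cubes it can meet inside any ball of radius $r\in[\delta,1]$ by $O_{d,k,n}(r^{-k})$; running this at every intermediate scale supplies precisely the nonconcentration input required, with constants depending only on $n,m,d$.

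\emph{Conclusion and the main difficulty.} Feeding the family $\{(Q,\Pi_{x,Q})\}$, the weights built from the $f_j^\delta$, and the Brascamp--Lieb factors into \cite{zk2018} yields $\int_{\R^n}(G^\delta)^p\,dx\le C(n,m,d)\prod_{j=1}^m(\int f_j)^p$ for the discretized integrand $G^\delta$; letting $\delta\to 0$ and applying Fatou's lemma recovers \eqref{kakeya}. I expect the genuine obstacle to lie not in the algebraic bookkeeping but in ensuring that the discretization does not destroy the Brascamp--Lieb weight: $\blw(\{\pi_j\})$ is only upper semicontinuous in the maps $\pi_j$ --- it vanishes on the proper real-algebraic set where the Bennett--Carbery--Christ--Tao dimension inequalities degenerate --- so one cannot assert $G^\delta\to G$ directly. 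This is handled by averaging over translates of the $\delta$-net: for almost every translate the cube centers avoid the bad locus, the discrete weights then converge to the continuum weight by continuity of $\blw$ on the open set where it is positive, and Fubini in the translation variable completes the passage to the limit.
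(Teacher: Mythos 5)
Your high-level strategy---deduce \eqref{kakeya} from the discrete inequality of Zorin-Kranich by discretizing and passing to a limit---is the same as the paper's, but two of your key steps do not connect to the theorem you are invoking. Theorem 1.7 of \cite{zk2018}, as used here, is a statement about \emph{exact} affine algebraic varieties: the left-hand side integrates $\blw(\{T_{x_j}H_j\})^{1/p}$ over $\prod_j(H_j\cap Q)$ with respect to Hausdorff measure, and the right-hand side is $C_n\prod_j(\deg H_j)^p$. It is not an axiomatic statement about families of $\delta$-cubes carrying attached directions, so there are no ``polynomial-Wolff-type nonconcentration axioms'' to verify and your Wongkew-type cube count plays no role; algebraicity enters only through the degree. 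For the same reason, ``feeding the weights built from $f_j^\delta$ into \cite{zk2018}'' is not a step you can take directly: the quoted inequality is unweighted, and upgrading $\prod_j(\deg H_j)^p$ to $\prod_j(\int f_j)^p$ is genuine work. The paper does it by replacing each variety with a union of $N_j(H_j)$ slightly shrunk, translated copies (absorbing the weight into the degree, which is additive under unions), extending to real weights by homogeneity, and then to continuously varying families by a Riemann-sum argument (Propositions \ref{zkprop1} and \ref{zkprop2}). Likewise, the theorem applies to varieties, not to their $\delta$-neighborhoods; the paper bridges this by applying it to the rescaled varieties $\delta^{-1}\ri{y}$ and using a quantitative implicit function theorem (Lemma \ref{specialdf}) to show that $|\tilde\rho(x,y)|<\delta\kappa_n$ forces $\mathcal H^k(\ri{y}\cap Q^\delta_x)\gtrsim\delta^k$, which is what permits replacing the Hausdorff integral over the variety by the indicator of a thickened incidence set and then invoking the coarea formula. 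Your tubular-neighborhood discretization skips this bridge.

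Your diagnosis of the limiting difficulty is also off target. The weight $\blw$ is not merely upper semicontinuous: by Bennett--Bez--Cowling--Flock \cite{bbcf2017} the Brascamp--Lieb constant is continuous in the data, and the paper uses exactly this (together with $\inf_{|z_j-y_j|\le\delta}\omega_x(\{H_j(z_j)\})\le\omega_x(\{H_j(y_j')\})$ and monotone convergence) to pass from cube-center evaluations to the continuum integral. Averaging over translates of the net so that cube centers avoid the zero set of $\blw$ would not suffice: the object you must control is an integral over all of $\li{x}$, and continuity of $\blw$ restricted to the open set where it is positive gives no control near the boundary of that set. Finally, note that some normalization of the defining function is unavoidable at the end: the paper uses the transformation law $\blw(\{M_j\pi_j\})^{1/p}=\blw(\{\pi_j\})^{1/p}\prod_j|\det M_j|$ to check that the factor from the Brascamp--Lieb weight exactly cancels the Jacobian in the measure \eqref{themeasure}, so that the left-hand side of \eqref{kakeya} is independent of the choice of $\rho$; your proposal never addresses this.
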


The inequality \eqref{kakeya} is the main new tool of this paper for studying the $L^p$-improving properties of Radon-like operators in intermediate dimensions. When combined with recent new machinery regarding nonconcentration functionals \cite{gressman2018},  the inequality \eqref{kakeya} can be used as a direct replacement for an inflation map construction and the associated degenerate change of variables formula. This overcomes some significant limitations of that approach in the regime of intermediate dimensions. The most general result of this paper concerning $L^p$-improving properties is the following:
\begin{theorem}
Suppose $\Sigma \subset \Omega$ is a left-algebraic incidence relation with defining function $\rho$. Suppose also that \label{genradonthm}
\begin{equation} \sup_{y_1,\ldots,y_m \in F} [ W ( \{D_x \rho(x,y_j)\}_{j=1}^m) ]^{\frac{1}{p}} \gtrsim (\sigma ( F \cap \li{x}))^{s} \label{nonconcentrate} \end{equation}
for all $x \in \R^n$ and all Borel subsets $F \subset \li{x}$, where $\sigma$ is the measure \eqref{themeasure}. Then the Radon-like transform
\begin{equation} T f(x) := \int_{\li{x}} f d \sigma \label{theradondef} \end{equation}
satisfies the inequality
\begin{equation} \left( \int | T \chi_E(x)|^{\frac{n(m+s)}{(n-k)m}} dx \right)^{\frac{m(n-k)}{n(m+s)}} \leq C |E|^{\frac{m}{m+s}} \label{theradonineq} \end{equation}
for all Borel sets $E \subset \R^n$ with constant $C$ which depends only on $n, k, m,s,$ and the degree of $\Sigma$. Here $|E|$ denotes the Lebesgue measure of $E$.
\end{theorem}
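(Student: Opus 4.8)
The plan is to feed $f_1=\cdots=f_m=\chi_E$ into the Kakeya--Brascamp--Lieb inequality \eqref{kakeya} of Theorem~\ref{kakeyathm} and then, using the nonconcentration hypothesis \eqref{nonconcentrate}, to recognize the resulting left-hand side as controlling a power of the $L^{q}$-norm of $T\chi_E$ with $q=\frac{n(m+s)}{(n-k)m}$. We may assume $|E|<\infty$. Since $p=\frac{n}{m(n-k)}$ we have $\prod_{j=1}^m(\int\chi_E)^p=|E|^{mp}=|E|^{n/(n-k)}$, so Theorem~\ref{kakeyathm} yields
\[
\int_{\R^n}\Bigl[\;\int_{\li{x}}\!\!\cdots\!\!\int_{\li{x}}\bigl[\blw(\{D_x\rho(x,y_j)\}_{j=1}^m)\bigr]^{1/p}\prod_{j=1}^m\chi_E(y_j)\,d\sigma(y_1)\cdots d\sigma(y_m)\Bigr]^{p}dx\;\le\;C\,|E|^{\frac{n}{n-k}}.
\]
It therefore suffices to prove the pointwise-in-$x$ bound
\[
\int_{\li{x}}\!\!\cdots\!\!\int_{\li{x}}\bigl[\blw(\{D_x\rho(x,y_j)\}_{j=1}^m)\bigr]^{1/p}\prod_{j=1}^m\chi_E(y_j)\,d\sigma(y_1)\cdots d\sigma(y_m)\;\ge\;c\,\bigl(T\chi_E(x)\bigr)^{m+s}
\]
with $c>0$ depending only on $n,k,m,s$ and the degree of $\Sigma$: combining the two displays gives $\int_{\R^n}(T\chi_E)^{p(m+s)}\le c^{-p}C|E|^{n/(n-k)}$, and since $p(m+s)=\frac{n(m+s)}{m(n-k)}$, raising to the power $\frac1{p(m+s)}=\frac{m(n-k)}{n(m+s)}$ reproduces \eqref{theradonineq}.

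Everything rests on this pointwise bound. Put $F_x:=E\cap\li{x}$ and $\mu_x:=\sigma(F_x)=T\chi_E(x)$; the integrand vanishes off $F_x^m$, so the claim to be proved reads $\int_{F_x^m}[\blw(\{D_x\rho(x,y_j)\}_{j=1}^m)]^{1/p}\,d\sigma^{\otimes m}\gtrsim\mu_x^{m+s}$. The input is \eqref{nonconcentrate} applied not just to $F_x$ but to \emph{every} Borel subset $F\subseteq F_x$, i.e.\ $\sup_{y_1,\dots,y_m\in F}[\blw(\{D_x\rho(x,y_j)\}_{j=1}^m)]^{1/p}\gtrsim\sigma(F)^s$. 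When $m=1$ this is equivalent, via the layer-cake formula, to $\sigma\{y\in F_x:[\blw(D_x\rho(x,y))]^{1/p}\le t\}\lesssim t^{1/s}$, whence $\int_{F_x}[\blw]^{1/p}\,d\sigma\gtrsim\mu_x^{1+s}$ at once. For $m\ge2$, nonnegativity of the integrand is \emph{not} enough: a weight concentrated in a thin neighborhood of the locus where two of the maps $D_x\rho(x,y_i),D_x\rho(x,y_j)$ nearly coincide can satisfy \eqref{nonconcentrate} while integrating to essentially zero over $F_x^m$. What rescues the argument is that $(y_1,\dots,y_m)\mapsto\blw(\{D_x\rho(x,y_j)\}_{j=1}^m)$ is, on $(\li{x})^m$, a semialgebraic function whose complexity is bounded in terms of $n,m$ and $d$ alone --- a consequence of the Bennett--Carbery--Christ--Tao structure theory for the Brascamp--Lieb constant together with the left-algebraicity of $\Sigma$, which bounds uniformly in $x$ the complexity of the maps $y\mapsto D_x\rho(x,y)$ on $\li{x}$. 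For functionals of this controlled algebraic type, the supremum-over-every-subproduct statement above upgrades --- by a quantitative covering and pigeonholing argument, which is precisely the role of the nonconcentration machinery of \cite{gressman2018} --- to the measure estimate
\[
\sigma^{\otimes m}\bigl\{(y_1,\dots,y_m)\in F_x^m:\bigl[\blw(\{D_x\rho(x,y_j)\}_{j=1}^m)\bigr]^{1/p}\ge c'\mu_x^s\bigr\}\;\ge\;c''\mu_x^m,
\]
and integrating the nonnegative integrand over this set gives the pointwise bound with $c=c'c''$.

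The main obstacle is exactly this step for $m\ge2$: upgrading the supremum-form nonconcentration \eqref{nonconcentrate} to a measure/superlevel-set estimate for the Brascamp--Lieb weight, \emph{uniformly in} $x$ and $E$. The uniformity is what forces quantifying the algebraic complexity of $\blw\circ D_x\rho$ independently of $x$, which is where left-algebraicity of degree $d$ --- and the constant's dependence on $d$ --- enters. By contrast, once Theorem~\ref{kakeyathm} and this nonconcentration upgrade are in hand, the exponent arithmetic and the layer-cake and Fubini steps are routine.
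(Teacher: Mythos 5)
Your overall architecture is the same as the paper's: apply Theorem \ref{kakeyathm} with $f_1=\cdots=f_m=\chi_E$, reduce to the pointwise-in-$x$ lower bound $\int_{(E\cap\li{x})^m}[\blw(\{D_x\rho(x,y_j)\}_{j=1}^m)]^{1/p}\,d\sigma^{\otimes m}\gtrsim (T\chi_E(x))^{m+s}$, and check the exponent arithmetic (which you do correctly). You also correctly identify the crux: for $m\ge 2$ the sup-form hypothesis \eqref{nonconcentrate} does not by itself prevent the weight from being concentrated on a null set of the product, so an ``integral $\gtrsim$ sup'' upgrade is needed.

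The gap is in how you justify that upgrade. You assert that $(y_1,\dots,y_m)\mapsto\blw(\{D_x\rho(x,y_j)\})$ is semialgebraic of complexity bounded via the Bennett--Carbery--Christ--Tao structure theory, and that the machinery of \cite{gressman2018} then yields the product superlevel-set estimate. Neither claim is available off the shelf. BCCT characterizes \emph{finiteness} of the Brascamp--Lieb constant via subspace inequalities; it does not give a quantitative comparison of the constant to algebraic functions of the data, and mere semialgebraicity is not the input format that the nonconcentration machinery of \cite{gressman2018} accepts. That machinery (Proposition \ref{convprop} here, a mild extension of Lemma 1 of \cite{gressman2018}) applies to a \emph{finite-dimensional vector space of functions}: it produces a single large subset $F'\subset F$ on which every element of the space is simultaneously controlled, and a superlevel-set bound uniform over the space. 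To feed the Brascamp--Lieb weight into it, the paper first proves Lemma \ref{bl2polylem} --- a genuinely new result, obtained via Geometric Invariant Theory and the Hilbert--Mumford criterion --- showing $[\blw]^{1/p}$ is comparable, with constants independent of the data, to $\sup_i|\Phi_i|^{(n-k)/d_i}$ over a \emph{finite} family of invariant polynomials $\Phi_i$ in the entries of the $\pi_j$'s. Only then does Proposition \ref{convprop}, applied one variable at a time and iterated over $t_1,\dots,t_m$ (Lemma \ref{suplemma}), yield $\int_{F^m}[\blw]^{1/p}d\sigma^{\otimes m}\gtrsim(\sigma(F))^m\sup_{F'^m}[\blw]^{1/p}$ with $\sigma(F')\ge\sigma(F)/2$, after which \eqref{nonconcentrate} is applied to $F'$ (legitimately, since the hypothesis holds for all Borel subsets, as you note). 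So your proof skeleton is the right one, but the step you label as ``precisely the role of the nonconcentration machinery'' in fact requires an additional substantial ingredient --- the polynomial comparability of the Brascamp--Lieb constant --- that you neither supply nor correctly locate.
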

We call \eqref{theradonineq} a restricted strong type $(\frac{m+s}{m}, \frac{n(m+s)}{(n-k)m})$ inequality for $T$ following usual conventions, e.g. \cite{bos2009}.  In Section \ref{radonsec} we give several examples of how one can verify the main hypothesis \eqref{nonconcentrate} in a number of important special cases. The broadest of these applications is:
\begin{theorem}
For any integers $n,k$ satisfying $k < n \leq 2k$, consider the Radon-like operator acting on functions on $\R^n$ given by \label{radonthm}
\begin{equation}
\begin{split} T f(x) := \int_{\R^k} f & \left(x_1 + t_1,  \ldots, x_k + t_k, \vphantom{x_{k+1} + \frac{1}{2} \sum_{i=1}^k \lambda_{1i} t_i^2,\ldots, x_{n} + \frac{1}{2} \sum_{i=1}^k \lambda_{(n-k)i} t_i^2} \right. \\
& \qquad \left. x_{k+1} + \frac{1}{2} \sum_{i=1}^k \lambda_{1i} t_i^2,\ldots, x_{n} + \frac{1}{2} \sum_{i=1}^k \lambda_{(n-k)i} t_i^2  \right) dt,
\end{split} \label{myoperator}
\end{equation}
where $\lambda_{ji}$ is a $(n-k) \times k$ matrix whose minors satisfy the constraint
\[ \det \left[ \begin{array}{ccc} \lambda_{1 i} & \cdots & \lambda_{1 (i+n-k-1)} \\ \vdots & \ddots & \vdots \\
\lambda_{(n-k) i} & \cdots & \lambda_{(n-k) (i+n-k-1)} \end{array} \right] \neq 0 \]
for all $i$ (interpreting the columns as periodic with period $k$ to make sense of the index $i + n - k -1$ when $i + n-k - 1 > k$).  Then for all Borel sets $E \subset \R^n$, 
\begin{equation} || T \chi_E ||_{L^{\frac{2n-k}{n-k}}(\R^n)} \leq C |E|^{\frac{n}{2n-k}} \label{knapp} \end{equation}
for some $C < \infty$ independent of $E$.
\end{theorem}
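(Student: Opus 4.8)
The plan is to realize \eqref{myoperator} as the Radon-like transform of a left-algebraic incidence relation and then to apply Theorem \ref{genradonthm}, the entire content reducing to a verification of the nonconcentration hypothesis \eqref{nonconcentrate}. Write $\ell := n-k$, so that $1 \leq \ell \leq k$, and take on $\Omega := \R^n \times \R^n$ the defining function $\rho = (\rho_1,\ldots,\rho_\ell)$ with
\[ \rho_j(x,y) := (y_{k+j} - x_{k+j}) - \frac{1}{2}\sum_{i=1}^{k} \lambda_{ji} (y_i - x_i)^2, \qquad j = 1,\ldots,\ell . \]
The last $\ell$ columns of $D_x\rho$ form the block $-I_\ell$ and the last $\ell$ columns of $D_y\rho$ form $I_\ell$, so both derivative matrices have full rank $\ell$ on all of $\Sigma = \{\rho = 0\}$. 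For each $y$ the fiber $\ri{y}$ is the graph over $(x_1,\ldots,x_k)$ of a quadratic map into $\R^\ell$, hence is contained in an affine variety of degree at most $2^{\ell}$; thus $\Sigma$ is left-algebraic of degree bounded in terms of $n$ and $k$. Parametrizing $\li{x}$ by $t \mapsto (x_1 + t_1,\ldots,x_k + t_k, x_{k+1} + \frac{1}{2}\sum_i \lambda_{1i} t_i^2,\ldots)$, the induced Riemannian metric is $I_k + A(t)^T A(t)$, where $A(t)$ denotes the $\ell \times k$ matrix with entries $\lambda_{ji} t_i$, while $D_y\rho\,(D_y\rho)^T = I_\ell + A(t) A(t)^T$; since these Gram determinants coincide by Sylvester's identity, the measure \eqref{themeasure} pulls back to $dt$ and the operator \eqref{myoperator} is exactly \eqref{theradondef}.

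Along $\Sigma$ one has $D_x\rho(x,y) = [\, A(t) \mid -I_\ell \,]$ with $t_i := y_i - x_i$ for $i \leq k$. Since $\frac{n(m+s)}{(n-k)m} = \frac{n}{n-k}\bigl( 1 + \frac{s}{m}\bigr)$, Theorem \ref{genradonthm} produces the exponent $\frac{2n-k}{n-k}$ of \eqref{knapp} precisely when $s = \frac{\ell m}{n}$, and this holds for \emph{every} positive integer $m$; for this $s$ one has $sp = 1$. Using translation invariance to set $x = 0$ and identifying $\li{0}$ with $\R^k$ through the parameter $t$ (so that $\sigma$ becomes Lebesgue measure), the hypothesis \eqref{nonconcentrate} therefore reduces to producing a positive integer $m$ for which
\[ \sup_{t_1,\ldots,t_m \in F}\ W\bigl( \{ [\, A(t_j) \mid -I_\ell \,] \}_{j=1}^{m} \bigr)\ \gtrsim\ |F| \]
for every Borel set $F \subset \R^k$. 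The natural choice is $m := \lceil n/\ell \rceil$, the least value for which the associated Brascamp--Lieb data can be feasible — the scaling condition is automatic because each map is surjective, and it is saturated at $V = \R^n$. This recovers $m = n$ when $\ell = 1$ and $m = 2$ when $\ell = k$.

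It remains to prove the displayed inequality. By the structure theory of the Brascamp--Lieb constant, in the quantitative form underlying the Kakeya--Brascamp--Lieb inequality \cite{zhang2018, zk2018}, the weight $W$ of the stacked data is a geometric nonconcentration functional of exactly the type studied in \cite{gressman2018}: it is built from the $\ell \times \ell$ minors of the matrices $A(t_a) - A(t_b)$ and related subdeterminants of the $m\ell \times n$ matrix obtained by stacking the $[\, A(t_j) \mid -I_\ell \,]$, and, by the transformation rule $W(\{\pi_j \circ L\}) = |\det L|\, W(\{\pi_j\})$ applied to $L = \mathrm{diag}(\mu I_k, I_\ell)$, it is homogeneous of degree $k$ under the common dilation $t_j \mapsto \mu t_j$; the displayed inequality is thus scale invariant, matching the parabolic symmetry of $T$. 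The decisive point is that the cyclic nonvanishing hypothesis on the minors of $(\lambda_{ji})$ forces each $\ell\times\ell$ minor of $A(t_a) - A(t_b)$ coming from a window of $\ell$ consecutive columns to equal $\bigl( \prod_{i} (t_{a,i} - t_{b,i}) \bigr)$ — the product over the indices $i$ in that window — times a nonzero constant, so that $W$ lies in precisely the nondegenerate class to which the nonconcentration estimates of \cite{gressman2018} apply. Invoking those estimates gives $\sup_{t_1,\ldots,t_m \in F} W \gtrsim |F|$, which completes the verification of \eqref{nonconcentrate} and hence, through Theorem \ref{genradonthm}, the proof. I expect the last step to be the main obstacle: one must choose $m$ so that the power of $|F|$ delivered by the machinery of \cite{gressman2018} is exactly $1$, and verify that the cyclic minor hypothesis — together with the restriction $\ell \leq k$, i.e., $n \leq 2k$ — is precisely what places $W$ in that nondegenerate class, so that the combinatorics of choosing $\ell$ consecutive columns out of $k$ enters essentially.
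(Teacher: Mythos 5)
Your setup is sound and matches the paper's: the defining function, the verification via Proposition \ref{pmeasure} that $d\sigma = dt$, the identity $D_x\rho = [\,A(t)\mid -I_{n-k}\,]$, and the exponent bookkeeping showing that \eqref{theradonineq} yields \eqref{knapp} whenever $s = (n-k)m/n$ are all correct (the paper takes $m=n$, $s=n-k$). The gap is in the final step, which you yourself flag as ``the main obstacle'': asserting that the cyclic minor hypothesis places $\blw$ ``in precisely the nondegenerate class to which the nonconcentration estimates of \cite{gressman2018} apply'' is not a proof, and that step is the entire substance of the theorem. To close it you must (i) exhibit a specific invariant polynomial $\Phi(\{\pi_j\}_{j=1}^m)$ with $[\blw(\{\pi_j\})]^{1/p}\gtrsim|\Phi(\{\pi_j\})|$ --- this is what Lemma \ref{bl2polylem} together with the block-determinant description of Lemma \ref{itsadet} supplies, the paper's choice being the $n(n-k)\times n(n-k)$ determinant $\det M(\pi_1,\ldots,\pi_n)$ of Section \ref{radonthmproof} --- and (ii) verify the hypothesis of Theorem \ref{betterthm} for that $\Phi$, i.e.\ bound below the weight \eqref{betterdens}, which after the reductions \eqref{simpler1}--\eqref{simpler2} amounts to the inductive derivative computation \eqref{mainindid} producing exactly the product of the $(n-k)\times(n-k)$ cyclic minors of $\lambda$. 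Neither (i) nor (ii) appears in your argument; knowing that each consecutive-window minor of $A(t_a)-A(t_b)$ factors as a nonzero constant times $\prod_i(t_{a,i}-t_{b,i})$ does not by itself identify a polynomial dominating $\blw^{1/p}$, nor does it establish $\sup_{t_1,\ldots,t_m\in F}\blw\gtrsim|F|$.

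Your choice $m=\lceil n/(n-k)\rceil$ also creates an obstacle the paper avoids by taking $m=n$: with $m=n$ one has $p_j=1/(n-k)$, so $s_\Phi=n-k$ is admissible and the minimal invariant determinants have the clean one-block-row-per-$\pi_j$ structure the paper exploits, whereas for $m=\lceil n/(n-k)\rceil$ with $(n-k)\nmid n$ the exponent $p=n/(m(n-k))$ forces a much larger $s_\Phi$ and you propose no candidate polynomial. Even in the cleanest case $n=2k$, $m=2$, where $\blw$ reduces to a nonzero constant times $\prod_{i=1}^k|t_{1,i}-t_{2,i}|$, the required two-point estimate $\sup_{s,t\in F}\prod_i|s_i-t_i|\gtrsim|F|$ is itself a nontrivial nonconcentration inequality that would need its own proof; the results of \cite{gressman2018} invoked elsewhere in the paper concern the $(k{+}1)$-point multilinear determinant, not this functional. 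The paper's uniform choice $m=n$ sidesteps all of this.
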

A standard Knapp-type argument shows that the exponents in the conclusion \eqref{knapp} cannot be improved; as such, Theorem \ref{radonthm} can be regarded as an extension of work of by D. Oberlin \cite{oberlin2008} concerning ``model surface'' quadratic submanifolds. We note that it is understood through work of Ricci \cite{ricci1997} that quadratic model surfaces exist with dimension $k$ much less than $n/2$ when $n$ is large; the restriction $n \leq k/2$ present in Theorem \ref{radonthm} is not a fundamental limitation of the method; in particular,  Section \ref{radonmaxsec} illustrates how the method can be applied to a canonical non-translation-invariant quadratic Radon-like operator which integrates over submanifolds of dimension $k$ and codimension $k^2$.

\subsection{Outline and notation}
The remainder of this paper is organized as follows: Section \ref{kakeyasec} contains the proof of Theorem \ref{kakeyathm}, which is derived from a discrete inequality of Zhang and Zorin-Kranich using a host of essentially standard limiting arguments. Section \ref{blsec} proves a number of important new results about the nature of the Brascamp-Lieb constant. In the context of Theorem \ref{genradonthm}, the most important of these is Lemma \ref{bl2polylem}, which establishes the comparability of the Brascamp-Lieb constant and a supremum of certain invariant polynomials. The approach is to observe a deep connection between the Brascamp-Lieb constant and the field of Geometric Invariant Theory.  Lemma \ref{itsadet} also gives important insight into the family of these invariant polynomials, and in particular establishes that each such polynomial can be expressed as the determinant of a matrix with certain simple block structure, which is particularly useful when seeking to apply Theorem \ref{genradonthm}. Section \ref{proofsec} gives the proof of Theorem \ref{genradonthm}. The proof is a relatively straightforward combination of Theorem \ref{kakeyathm}, Lemma \ref{bl2polylem} and Proposition \ref{convprop}, which is itself a generalization of a result which was central to the study of nonconcentration inequalities \cite{gressman2018}. Section \ref{appsec} provides a number of sample applications of Theorem \ref{genradonthm} which include the moment curve case studied by Christ \cite{christ1998}, Theorem \ref{radonthm}, and some non-translation-invariant extensions. Finally, Section \ref{appendix} is an appendix which provides some elementary quantitative versions of the Inverse and Implicit Function Theorems which are needed in the proof of Theorem \ref{kakeyathm}.

The remainder of this paper employs the notation $\lesssim$ as is now rather commonly done: the statement $A \lesssim B$ will mean that there exists a finite nonnegative constant $C$ such that $A \leq C B$ holds uniformly over some range of parameters of $A$ and $B$. When those parameters are not readily apparent, they will be explicitly identified, e.g., ``$A_j \lesssim B_j$ uniformly for all $j$.'' The notation $A \gtrsim B$ is defined analogously, and $A \approx B$ will be used to indicate that both $A \lesssim B$ and $A \gtrsim B$ hold simultaneously.

Another important piece of space-saving notation which will be used heavily is the following: for any objects $p_1,\ldots,p_m$, the notation $\{p_j\}_{j=1}^m$ will denote the $m$-tuple $(p_1,\ldots,p_m)$.

\section{Continuous Kakeya-Brascamp-Lieb: Proof of Theorem \ref{kakeyathm}}
\label{kakeyasec}
The core result of this section is the proof of Theorem \ref{kakeyathm}. Our derivation is based directly on the Kakeya-Brascamp-Lieb inequality of Zorin-Kranich \cite{zk2018}, which is a natural evolution of an earlier result of Zhang \cite{zhang2018}. Zhang's result was itself inspired by Guth's approach to endpoint multilinear Kakeya \cite{guth2010}, which was prompted by and built upon work of Bennett, Carbery, and Tao in the non-endpoint case \cite{bct2006}.

\subsection{Reduction to smooth functions}

The first step in the proof of Theorem \ref{kakeyathm} is to show that it suffices to prove \eqref{kakeya} for nonnegative smooth functions $f_j$ of compact support. This follows by standard arguments, but as $p$ will generally be less than one, it is reasonable to proceed carefully nevertheless. The auxiliary result needed is that for any nonnegative Lebesgue integrable function $f$ on $\R^n$ and any $\delta > 0$, there is a pointwise nondecreasing sequence $f_\ell$ of nonnegative smooth functions of compact support such that
\[ f(x) \leq \lim_{\ell \rightarrow \infty} f_\ell(x) \mbox{ for all } x \in \R^n\]
(as opposed to merely almost everywhere) such that 
\[ \int f_\ell \leq \delta + \int f \]
for all $\ell$. To establish this auxiliary result, let $\eta > 0$ be a positive real number satisfying 
\[ (1 + \eta) \int f \leq \frac{\delta}{3} +  \int f  \]
and let $F_j := \set{x \in \R^n}{ (1+\eta)^{j-1} < f(x) \leq ( 1 + \eta)^j}$. By definition of these sets, one has the trivial inequality
\[ f(x) \leq \sum_{j = -\infty}^{\infty} (1 + \eta)^j \chi_{F_j} (x) \]
for every $x \in \R^n$ (where the sum is interpreted as an extended real number).
Next, for each $j \in \Z$, let $O_j$ be an open set containing $F_j$, each chosen so that
\[ \sum_{j} (1 + \eta)^{j}  \left| O_j \setminus F_j \right| \leq \frac{\delta}{3}. \]
Decompose each $O_j$ into nonoverlapping dyadic boxes $Q_{jk}$ (i.e., boxes of the form $[k_1 2^{\ell},(k_1+1) 2^\ell] \times \cdots \times [k_n 2^{\ell}, (k_n+1) 2^{\ell}]$ for integers $k_1,\ldots,k_n$ and $\ell$), and for each dyadic box, select a smooth nonnegative function of compact support $\varphi_{jk}$ which is identically $1$ on $Q_{jk}$ in such a way that the entire ensemble of functions satisfies
\[ \sum_{j,k} (1 + \eta)^j \int_{\R^n \setminus Q_{jk}} \varphi_{jk} \leq \frac{\delta}{3}. \]
To bound $f$ everywhere by the limit of an appropriate nondecreasing sequence $f_\ell$, one may simply select some ordering of the countably many dyadic boxes $Q_{jk}$ and let $f_n$ be the sequence of partial sums of $(1 + \eta)^j \varphi_{jk}$.  The conclusion that $\lim_{\ell \rightarrow \infty} f_\ell(x)$ is greater than $f(x)$ at every point follows directly from the fact that $\varphi_{jk} \geq 1$ on $Q_{jk}$ and the union of the $Q_{jk}$'s contains $F_j$ for each $j$. Similarly,
\begin{align*}
 \lim_{\ell \rightarrow \infty} \int f_\ell = \int \sum_{j,k} (1 + \eta)^j \varphi_{jk} & \leq \sum_{j,k} (1 + \eta)^j \left[ |Q_{jk}| + \int_{\R^n \setminus Q_{jk}} \varphi_{jk} \right] \\
 & \leq \frac{\delta}{3} + \sum_{j} (1 + \eta)^j |O_j|   \\
 & \leq \frac{\delta}{3} + \sum_{j} (1 + \eta)^j \left[ |F_j| + |O_j \setminus F_j|  \right] \\
 & \leq \frac{2 \delta}{3}  + (1 + \eta) \sum_{j} (1 + \eta)^{j-1} |F_j| \\
 & \leq \frac{2 \delta}{3} + (1 + \eta) \int f  \leq \delta + \int f.
 \end{align*}
Assuming that \eqref{kakeya} holds for all $m$-tuples of smooth nonnegative functions of compact support, the passage to general integrable functions is achieved by an application of the Monotone Convergence Theorem (which applies because $p > 0$) for the particular approximating sequences just constructed, one for each of the $m$ functions appearing in \eqref{kakeya}, and then letting $\delta \rightarrow 0^+$.

\subsection{Kakeya-Brascamp-Lieb for functions of varieties}

After restricting attention to smooth functions of compact support, the next significant step in the proof of \eqref{kakeya} builds on the following special case of the Kakeya-Brascamp-Lieb inequality as established by Zorin-Kranich \cite{zk2018}, which is itself a generalization of the closely related Theorem 8.1 of Zhang \cite{zhang2018}:
\begin{theorem*}[Theorem 1.7 of \cite{zk2018}]
Let $\mathcal Q$ be the collection of all boxes $[j_1,j_1+1] \times \cdots \times [j_n, j_n + 1]$ for integers $j_1,\ldots,j_n$ and suppose that $H_1,\ldots,H_m$ are affine algebraic varieties in $\R^n$ with $\dim H_j = k$. Then
\begin{equation}
\begin{aligned} \sum_{Q \in \mathcal Q} \left( \int_{\prod_{j=1}^m  (H_j \cap Q)} \left[ \blw( \{ T_{x_j} H_j \}_{j=1}^m) \right]^{\frac{1}{p}} d \mathcal H^{k}(x_1) \cdots d \mathcal H^{k}(x_m) \right)^{p} & \\
\leq C_n \prod_{j=1}^m & (\deg H_j)^{p}. 
\end{aligned} \label{zkineq} \end{equation}
Here $p$ and $\blw$ are as in \eqref{datadef}, and for each smooth point $x_j$ of $H_j$, $T_{x_j} H_j$ denotes the orthogonal projection from $\R^n$ onto the orthogonal complement of the tangent space of $H_j$ at $x_j$. The constant $C_n$ depends only on $n$.
\end{theorem*}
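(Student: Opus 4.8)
The plan is to reconstruct the proof of this statement — due to Zhang \cite{zhang2018} for ``simple'' Brascamp--Lieb data and to Zorin-Kranich \cite{zk2018} in general — by marrying the structure theory of Brascamp--Lieb constants to the polynomial method for endpoint multilinear Kakeya introduced by Guth \cite{guth2010}. The orienting observation is that \eqref{zkineq} is the exact higher-codimension analogue of Guth's theorem: when $m=n$, $k=n-1$, and each $T_{x_j}H_j$ is the orthogonal projection onto the normal line with unit vector $\nu_j$, one has $p=1$ and $\blw(\{T_{x_j}H_j\}) = |\det(\nu_1,\dots,\nu_n)|$, so \eqref{zkineq} reduces to Guth's endpoint multilinear Kakeya inequality in its ``variety'' form (equivalent, after thickening the varieties to $\delta$-neighborhoods and sending $\delta\to 0$, to the tube form). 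Accordingly the proof splits into (i) reducing the general Brascamp--Lieb weight to a product of Loomis--Whitney-type determinantal weights, and (ii) running the polynomial method for each such determinantal factor.

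For (i) I would use the factorization theory of Bennett--Carbery--Christ--Tao: a feasible Brascamp--Lieb datum decomposes along a chain $0 = V_0 \subsetneq V_1 \subsetneq \cdots \subsetneq V_r = \R^n$ of critical subspaces into \emph{simple} data on the quotients $V_i/V_{i-1}$, and for a simple datum $\blw$ equals, up to a constant depending only on dimensions, a single polynomial in the entries of the $\pi_j$ --- concretely a determinant with the block structure derived in the present paper as Lemma \ref{itsadet}. Since \eqref{zkineq} needs only an \emph{upper} bound on $\blw$, and $\blw$ is upper semicontinuous in the configuration, one may cover the compact space of $m$-tuples of $k$-planes by finitely many pieces on which the critical chain is locally constant and, on each, bound $[\blw(\{T_{x_j}H_j\})]^{1/p}$ from above by the product over the blocks of the corresponding simple weights applied to the compressions of the $T_{x_j}H_j$. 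The two bookkeeping points are that these compressions are orthogonal projections of $\R^n$ onto the $V_i/V_{i-1}$ post-composed with the $\pi_j$, so they map each $H_j$ to an algebraic variety of dimension $\le k$ and degree $\le \deg H_j$, and that the exponents stay critical on each block because $p\,m\,(n-k) = n$. Multiplying the resulting per-block inequalities then gives \eqref{zkineq}.

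For (ii) one is left, on a single simple block, with an inequality of the shape $\sum_{Q\in\mathcal Q}\bigl(\int_{\prod_j(H_j\cap Q)}|\det(\cdots)|^{\theta_j}\, d\mathcal H^k(x_1)\cdots d\mathcal H^k(x_m)\bigr)^p \lesssim_n \prod_j(\deg H_j)^p$, which is Guth's endpoint multilinear Kakeya in variety form. I would discretize: thicken each $H_j$ to its $\delta$-neighborhood, cover that neighborhood inside each unit cube by $\lesssim_n \delta^{-k}\deg H_j$ ``planks'' ($\delta$-neighborhoods of affine $k$-planes) on each of which the tangent plane, hence the determinantal weight, is essentially constant, and so reduce to a discrete multilinear-Kakeya estimate for $m$ families of pairwise transverse planks. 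The now-standard polynomial machinery then applies, whose only geometric inputs are: a B\'ezout/Crofton bound, namely that a real algebraic $k$-variety of degree $D$ meets a transverse family of $\delta$-planks in total transversality-weighted content $\lesssim_n D\,\delta^{k}$ (the coarea formula turns this into a count of intersections with parallel affine slices, at most $D$ apiece); and a polynomial ham-sandwich/partitioning step producing, for any configuration of $N$ planks, a polynomial of degree $\lesssim N^{1/n}$ whose zero set captures a fixed fraction of the mass, after which summing directed contents over the $m$ families against the plank transversality and balancing against the B\'ezout bound closes the estimate. Sending $\delta\to 0^+$ returns to \eqref{zkineq}.

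The main obstacle is step (i). Unlike Guth's situation, $\blw(\{\pi_j\})$ is not a single polynomial in the maps $\pi_j$: its blow-up and vanishing are governed by critical subspaces that can jump as the points $x_j$ move along the $H_j$, so the real work is to establish the pointwise factorization bound $[\blw]^{1/p}\lesssim \prod_{\text{blocks}}(\text{simple weights})$ with constants \emph{uniform} over all configurations --- this is where upper semicontinuity and a compactness reduction are essential --- and then to check that the block projections never increase $\dim H_j$ or $\deg H_j$ and that the per-block inequalities reassemble with matching exponents. Once this structural input is in place, the analytic core is precisely Guth's theorem and transfers with only cosmetic changes. A secondary genuine technicality is making the $\delta$-thickening and the limit $\delta\to 0^+$ legitimate when $p<1$, which is handled by the same kind of monotone-limit argument used elsewhere to pass between discrete and continuous forms.
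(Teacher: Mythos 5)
First, a framing point: the paper does not prove this statement at all --- it is imported verbatim as Theorem 1.7 of \cite{zk2018} (generalizing Theorem 8.1 of \cite{zhang2018}) and used as a black box, so the only meaningful comparison is with those cited proofs. Those proofs run Guth's polynomial method \emph{directly} on the Brascamp--Lieb weight, controlling $\blw$ inside the induction through its Gaussian/geometric characterization and its monotonicity/subadditivity properties; they do not reduce to ``simple'' data first. Your reduction in step (i) is where your argument breaks. The subspace-splitting inequality of \cite{bcct2008} goes the wrong way for your purposes: for an arbitrary subspace $V$ one has $\mathrm{BL} \leq \mathrm{BL}_V\cdot \mathrm{BL}^V$ (Fubini through $V$), i.e.\ $\blw \geq \blw_V\cdot\blw^V$, and the reverse bound $\blw \leq \prod(\text{block weights})$ --- the pointwise upper bound you need to replace the integrand in \eqref{zkineq} --- holds only when the chain is \emph{critical for that particular configuration} $\{T_{x_j}H_j\}$. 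The critical subspaces rotate continuously (and can jump) as the $x_j$ move along the $H_j$, so there is no finite cover of configuration space by pieces on which the chain is a fixed flag; and on the closure of any such piece the frozen chain ceases to be critical, at which point the frozen-chain bound can fail outright (the block data can become infeasible, making the proposed upper bound $0$ while $\blw>0$). Even granting a pointwise factorization, ``multiplying the per-block inequalities'' is not a step: the integral over $\prod_j(H_j\cap Q)$ of a product of block weights does not factor, the fixed unit-cube decomposition of $\R^n$ is incompatible with configuration-dependent quotients $V_i/V_{i-1}$, and the projected varieties can drop dimension, so the pushforward of $\mathcal H^k$ is no longer Hausdorff measure of a $k$-dimensional variety and the degree/measure bookkeeping that makes $p\,m\,(n-k)=n$ work per block collapses.

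Step (ii) has a secondary but real problem: the per-block endpoint inequality you invoke is only ``precisely Guth's theorem'' in the hypersurface case $k=n-1$, $m=n$. For simple blocks of intermediate dimension, the endpoint plank/variety statement with determinantal weights is exactly Zhang's Theorem 8.1 --- the theorem you are trying to prove --- so the proposal is circular there; moreover the thicken-and-discretize passage (covering a degree-$D$ variety's $\delta$-neighborhood by planks with ``essentially constant'' tangent and then taking $\delta\to0$ at the endpoint) is precisely the lossy step Guth's induction on varieties is designed to avoid, and it is not legitimate to treat it as cosmetic, especially with $p<1$. In short: the analytic core cannot be outsourced to Guth after a BCCT factorization; handling the full Brascamp--Lieb weight inside the polynomial method is the substance of \cite{zhang2018} and \cite{zk2018}, and your step (i) as written would fail.
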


The proof of Theorem \ref{kakeyathm} proceeds by deducing some self-improvements of the above theorem which generalize it first to a discrete weighted version of Theorem \ref{kakeyathm} and then to a continuous analogue. These refinements are the contents of the upcoming Propositions \ref{zkprop1} and \ref{zkprop2}, respectively.

For convenience in the arguments that follow, let $Q_0$ be the box $[-1/2,1/2]^n$ and let $Q_x = x + Q_0$ for all $x \in \R^n$. The norm $| \cdot |$ on $\R^n$ will denote the $\ell^\infty$ norm in the standard coordinate basis. Furthermore, given $x \in \R^n$ and an $m$-tuple $\{H_j\}_{j=1}^m$ of affine algebraic varieties in $\R^n$, define
\begin{equation}
 \omega_x(\{H_j\}_{j=1}^m) :=  \int_{\prod_{j=1}^m (H_j \cap Q_x)} \! \! \left[ \blw( \{ T_{x_j} H_j \}_{j=1}^m) \right]^{\frac{1}{p}} d \mathcal H^{k}(x_1) \cdots d \mathcal H^{k}(x_m). \label{newwt}
 \end{equation}

\begin{proposition}
If $E_1,\ldots,E_m$ are finite sets of $k$-dimensional varieties in $\R^n$ and if $N_j : E_j \rightarrow \R_{\geq 0}$ for each $j=1,\ldots,m$, then \label{zkprop1}
\begin{equation} \begin{aligned} \int_{\R^n} \left[ \mathop{\sum_{H_1 \in E_1,\ldots,}}_{H_m \in E_m} \left( \prod_{j=1}^m N_j(H_j) \right)  \omega_x(\{H_j\}_{j=1}^m) \right]^p  & dx  \\ \leq C_n \prod_{j=1}^m  & \left[ \sum_{H \in E_j} N_j(H) \deg H \right]^p \! \!, \end{aligned} \label{zkineq2}
\end{equation}
where $p$ and $C_n$ are the same as in \eqref{zkineq}.
\end{proposition}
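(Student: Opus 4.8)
The plan is to derive Proposition \ref{zkprop1} from the theorem of Zorin-Kranich (the displayed Theorem 1.7 of \cite{zk2018}, reproduced above) by two successive self-improvements: first passing from a single box $Q_0$ to an arbitrary translate $Q_x$ via a covering/averaging argument, and then passing from a single $m$-tuple of varieties to a weighted sum over finite families $E_1,\ldots,E_m$ by a multilinear manipulation. The key reformulation is that \eqref{zkineq} is exactly the statement $\sum_{Q \in \mathcal Q} \omega_{c(Q)}(\{H_j\}_{j=1}^m)^p \leq C_n \prod_j (\deg H_j)^p$, where $c(Q)$ ranges over the centers of the unit boxes with integer corners; the left-hand side of \eqref{zkineq2} replaces this discrete sum over a lattice of box-centers by a genuine integral $\int_{\R^n} dx$, and replaces a single product of weights by a finite sum of such products.

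First I would record the translation-invariance built into the construction: since $\mathcal Q$ is a lattice of unit boxes, applying \eqref{zkineq} to the translated varieties $H_j - v$ for $v \in [0,1)^n$ and then integrating in $v$ over $[0,1)^n$ converts the sum over the lattice into $\int_{\R^n} \omega_x(\{H_j\})^p \, dx$, because $\bigcup_{v \in [0,1)^n}(\mathcal Q - v)$ tiles $\R^n$ and $\omega_{x}$ is (by \eqref{newwt}) continuous in $x$ and unchanged under simultaneously translating all $H_j$ and $x$. Degrees are translation-invariant, so the right side stays $C_n \prod_j (\deg H_j)^p$. This yields the special case of \eqref{zkineq2} in which each $E_j$ is a singleton and each $N_j \equiv 1$.

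Next comes the passage to weighted finite families. The natural device is to note that $\omega_x$ is $m$-linear in the following sense: for fixed $x$, the integrand $[\blw(\{T_{x_j}H_j\})]^{1/p}\prod$ behaves well under replacing each $H_j$ by a formal weighted union. Concretely, given the families $E_j$ with weights $N_j$, one would like to ``realize'' the weighted sum $\sum_{H \in E_j} N_j(H)\deg H$ as (a multiple of) the degree of a single variety and $\sum_{H \in E_j} N_j(H)\omega_x(\ldots)$ as $\omega_x$ of that variety, but weights need not be integers. The cleanest route is to first treat the case of nonnegative integer weights $N_j$ by taking $H_j$ to be a disjoint union (in general position, after a generic translation so that the pieces don't interfere) of $N_j(H)$ translated copies of each $H \in E_j$: then $\deg$ of the union is $\sum_H N_j(H)\deg H$ and, since the Brascamp-Lieb weight $\blw$ depends only on the tangent space at the chosen point and the pieces are essentially disjoint, $\omega_x$ of the union expands as $\sum_{H_1,\ldots,H_m}\prod_j N_j(H_j)\omega_x(\{H_j\})$ up to negligible overlap contributions; apply the singleton case just proved and clear denominators. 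Rational weights follow by scaling both sides (both $\deg$-sums and the integrand scale multilinearly under $N_j \mapsto \lambda N_j$), and general nonnegative real weights follow by continuity/monotone approximation, using that $p>0$ so both sides are continuous in the $N_j$. Since $\omega_x$ is supported (in the relevant sense) near the varieties and everything is locally finite, the limiting arguments are routine.

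The main obstacle is the ``disjoint union in general position'' step: one must check that taking a union of generic translates of the varieties in $E_j$ genuinely multiplies degree additively and that the cross terms — contributions to $\omega_x$ from tuples $(x_1,\ldots,x_m)$ where some $x_j$ lies in an overlap of two translated copies, or near a lower-dimensional intersection — have $\mathcal H^k$-measure zero and hence do not affect the integral. This is where one needs to be slightly careful that the Brascamp-Lieb weight $\blw(\{T_{x_j}H_j\})$, while possibly large, is integrable enough against $\mathcal H^k \times \cdots \times \mathcal H^k$ that measure-zero sets are harmless; this is guaranteed a posteriori by \eqref{zkineq} itself applied to each configuration, so the argument closes. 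An alternative that sidesteps unions of varieties entirely is to prove the $m$-linear inequality directly by expanding $\left[\sum \prod_j N_j(H_j)\omega_x\right]^p$ is \emph{not} linear, so one cannot expand the $p$-th power — which is exactly why the detour through actual varieties (where the Zorin-Kranich theorem is available) is needed rather than a soft duality argument.
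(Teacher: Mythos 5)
Your strategy is the same as the paper's: first convert the lattice sum in \eqref{zkineq} into an integral over $x$ by translation-averaging (the paper applies \eqref{zkineq} to $-x+H_j$ and integrates $x$ over $[0,1]^n$, exactly as you propose), then handle integer weights by applying the single-tuple bound to a union of perturbed copies of the $H_j$'s, and finally pass to real weights by homogeneity in the $N_j$'s and approximation. The one place where your sketch does not close is the step ``apply the singleton case just proved and clear denominators.'' Applying the integralized bound to the union $\tilde H_j = \bigcup_{H,i} (u_{ji}+H)$ controls $\int [\omega_x(\{\tilde H_j\}_{j=1}^m)]^p\,dx$, and the expansion of $\omega_x(\{\tilde H_j\})$ produces terms $\omega_x(\{u_{ji}+H_j\}_{j=1}^m)$ --- computed with the \emph{translated} varieties and the box $Q_x$ --- not the terms $\omega_x(\{H_j\}_{j=1}^m)$ appearing on the left of \eqref{zkineq2}. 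You identify the overlap/measure-zero issue (which is indeed harmless once the copies are pairwise distinct) but never bridge this mismatch: $(u_{ji}+H_j)\cap Q_x = u_{ji}+(H_j\cap Q_{x-u_{ji}})$ involves a different box for each $(j,i)$, so no single evaluation point $x'$ makes the sum equal to $\sum \prod_j N_j(H_j)\,\omega_{x'}(\{H_j\})$. The paper resolves this by using \emph{shrunk} translates $u_{ji}+(1-\delta)H_j$ with $|u_{ji}|<\delta/2$, for which the containment $(1-\delta)^{-1}Q_{x-u_{ji}}\supset Q_{(1-\delta)^{-1}x}$ yields the clean pointwise inequality
\[
\omega_{(1-\delta)^{-1}x}(\{H_j\}_{j=1}^m) \leq (1-\delta)^{-km}\,\omega_x(\{u_{ji}+(1-\delta)H_j\}_{j=1}^m),
\]
after which one sums, changes variables in $x$, and sends $\delta\to 0^+$. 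Your route can be repaired without the dilation by invoking continuity of the Brascamp-Lieb weight in the data and Fatou's lemma as $u_{ji}\to 0$, but some such limiting device must be made explicit; as written, the identification of the expanded union with the weighted sum of the original $\omega_x(\{H_j\})$'s is asserted rather than proved.
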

\begin{proof}
The first step of this proposition is to replace the sum over $Q \in \mathcal Q$ in \eqref{zkineq} by an integral as in \cite{zhang2018}.
To do this, let $x \in \R^n$ be fixed and apply \eqref{zkineq} to the shifted varieties $\{-x + H_j\}_{j=1}^m$; note that shifting does not change degree. For any $Q \in \mathcal Q$,
\begin{align*}
& \int_{(-x + H_1) \cap Q} \! \! \! \cdots \int_{(-x + H_m) \cap Q} \! \! \left[ \blw( \{ T_{x_j} (-x+H_j) \}_{j=1}^m) \right]^{\frac{1}{p}} d \mathcal H^{k}(x_m) \cdots d \mathcal H^{k}(x_1) \\
& =  \int_{H_1 \cap (x+ Q)} \! \! \! \cdots \int_{H_m \cap (x+Q)} \! \! \left[ \blw( \{ T_{x_j} H_j \}_{j=1}^m) \right]^{\frac{1}{p}} d \mathcal H^{k}(x_m) \cdots d \mathcal H^{k}(x_1)
\end{align*}
by translation-invariance of Hausdorff measure.
Since the sum of this quantity over $Q \in \mathcal Q$ is bounded by $C_n \prod_{j} (\deg H_j)^p$ for all $x \in \R^n$, it follows that
\[ \sum_{\ell \in \Z^n} \left[  \omega_{x+\ell}(\{H_j\}_{j=1}^m) \right]^p \leq C_n \prod_{j=1}^m (\deg H_j)^{p} \]
for all $x \in \R^n$. Integrating $x$ over $[0,1]^n$ gives
\begin{align}
\int_{\R^n} \left[  \omega_x(\{H_j\}_{j=1}^m) \right]^p   dx  \leq C_n \prod_{j=1}^m (\deg H_j)^{p} \label{zkineq15}
\end{align}
for any $m$-tuple of affine varieties $H_1,\ldots,H_m$.

The next step is to introduce the weights $N_j$. To that end, suppose initially that $N_j$ is any nonnegative integer-valued function on $E_j$ for each $j=1,\ldots,m$. For any fixed $\delta \in (0,1)$ and each $j=1,\ldots,m$, let $\tilde H_j$ be a union of varieties of the form $u_{ji} + (1-\delta) H_j$ as $H_j$ ranges over all varieties in $E_j$ with $N_j(H_j) > 0$ and as $i$ ranges over $\{1,\ldots,N_j(H_j)\}$. Assume also that the shifts $u_{ji}$ satisfy $|u_{ji}| < \delta/2$ and are chosen so that no two of the varieties $u_{ji} + (1-\delta) H_j$ are equal. The key idea in the proof of this proposition is to apply \eqref{zkineq15} to the varieties $\tilde H_j$. First observe that $\omega_x( \{ \tilde H_j \}_{j=1}^m)$ expands as a sum of terms of the form $\omega_x( \{ u_{ji} + (1 -\delta) H_j \}_{j=1}^m)$, where for each $j$, $u_{ji} + (1-\delta) H_j$ is one of the varieties just described whose union is $\tilde H_j$. Each such term $\omega_x( \{ u_{ji} + (1 -\delta) H_j \}_{j=1}^m)$ is itself an integral over $((u_{1i} + (1-\delta) H_1) \cap Q_x) \times \cdots ((u_{mi} + (1-\delta) H_m) \cap Q_x)$ of the corresponding weight $\blw^{1/p}$ generated by the orthogonal projections onto the orthogonal complement of the tangent spaces $T_{x_j} (u_{ji} + (1-\delta) H_j)$.
Observe that $(u_{ji} + (1-\delta) H_j) \cap Q_x = u_{ji} + ((1-\delta) H_j) \cap Q_{x - u_{ji}} = u_{ji} + (1-\delta) ( H_j \cap (1-\delta)^{-1} Q_{x-u_{ji}})$ and that $(1-\delta)^{-1} Q_{x-u_{ji}} \supset Q_{(1-\delta)^{-1} x}$. To see this last fact, note that
\[ (1 - \delta)^{-1} x + y = (1-\delta)^{-1} (x - u_{ji}) + (1-\delta)^{-1} u_{ji} + y, \]
and when $|y| < 1/2$, it must follow that $|(1-\delta)^{-1} u_j + y| \leq \delta (1-\delta)^{-1}/2 + 1/2 = (1-\delta)^{-1}/2$, so that 
\[ (1-\delta)^{-1} x + y = (1-\delta)^{-1} \left( x - u_{ji} + \tilde y \right) \]
for some $|\tilde y| \leq 1/2$.
These elementary observations combined with a sequence of changes of variables imply that
\begin{align*}
& \int_{\prod_{j=1}^m (( u_{ji} + H_j) \cap Q_x)} \left[ \blw(\{ T_{x_j} (u_{ji} + (1-\delta) H_j)\}_{j=1}^m) \right]^{\frac{1}{p}} d \mathcal H^{k}(x_1) \cdots d \mathcal H^{k}(x_m) \\
& \geq \int_{\prod_{j=1}^m (1-\delta)(H_j \cap  Q_{(1-\delta)^{-1} x}) } \left[ \blw(\{ T_{x_j} ((1-\delta) H_j)\}_{j=1}^m) \right]^{\frac{1}{p}} d \mathcal H^{k}(x_1) \cdots d \mathcal H^{k}(x_m) \\
& = (1-\delta)^{mk} \int_{\prod_{j=1}^m (H_j \cap  Q_{(1-\delta)^{-1} x}) } \left[ \blw(\{ T_{x_j}  H_j\}_{j=1}^m) \right]^{\frac{1}{p}} d \mathcal H^{k}(x_1) \cdots d \mathcal H^{k}(x_m),
\end{align*}
i.e.,
\[ \omega_{(1-\delta)^{-1} x} ( \{H_j\}_{j=1}^m) \leq (1-\delta)^{-km} \omega_x( \{ u_{ji} + (1 -\delta) H_j \}_{j=1}^m). \]
Summing over the varieties forming each $\tilde H_j$ gives
\begin{equation} \sum_{H_1 \in E_1,\ldots,H_m \in E_m} N_j(H_j) \omega_{(1-\delta)^{-1} x} ( \{H_j\}_{j=1}^m) \leq (1-\delta)^{-km} \omega_x( \{ \tilde H_j \}_{j=1}^m). \label{zkscale1} \end{equation}
Since $\deg \tilde H_j \leq \sum_{H_j \in E_j} N_j(H_j) \deg H_j$, applying \eqref{zkineq15} to the varieties $\tilde H_j$, invoking the inequality \eqref{zkscale1}, applying a change of variables in $x$, and sending the spacing parameter $\delta \rightarrow 0^+$ gives the conclusion of this proposition when $N_j$ is integer-valued.

Because both sides of this inequality are homogeneous of degree $p$ with respect to each $N_j$, multiplying each $N_j$ by a nonzero real number preserves both sides of the inequality, meaning the inequality remains true when each $N_j$ is a positive real multiple of a nonnegative integer-valued function. However, every nonnegative real-valued function $N_j$ is uniformly comparable to such a function with constants which are as close as desired to $1$. Therefore the proposition must be true in the general case of each $N_j$ being an arbitrary nonnegative real-valued function.
\end{proof}

\begin{proposition}
For each $j=1,\ldots,m$, let $U_j \subset \R^n$ be an open set and let $H_j$ be a mapping from $U_j$ into the set of $k$-dimensional varieties on $\R^n$ of degree at most $D_j$ such that $H_j(y)$ depends smoothly on $y$. For any nonnegative measurable functions $f_j$ on $U_j$,  \label{zkprop2}
\begin{equation} \begin{aligned} \int_{\R^n} \left[ \int  \left( \prod_{j=1}^m f_j(y_j) \right) \omega_x( \{H_j(y_j)\}_{j=1}^m) dy_1 \cdots d y_m \right]^p  dx & \\  \leq C_n \prod_{j=1}^m  \left[ D_j \int f_j \right]^p & , \end{aligned} \label{zkineq25}
\end{equation}
where $p$ and $\omega_x$ are as above. The constant $C_n$ is the same as in Proposition \ref{zkprop1}.
\end{proposition}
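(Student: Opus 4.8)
The plan is to obtain Proposition \ref{zkprop2} from the discrete inequality of Proposition \ref{zkprop1} by a routine discretization-and-limiting argument, which also explains why the constant $C_n$ is inherited verbatim. As a preliminary reduction, observe that both sides of \eqref{zkineq25} are monotone under pointwise increase of the $f_j$ and, since $p>0$, the Monotone Convergence Theorem lets one pass from $f_j^{(N)} := \min(f_j,N)\chi_{K_j^{(N)}}$ (with $K_j^{(N)} \uparrow U_j$ compact and $N \uparrow \infty$) to general $f_j$; so it suffices to treat bounded $f_j$ supported in a fixed compact $K_j \subset U_j$. In that situation, for $y_j \in K_j$ the variety $H_j(y_j)$ has degree at most $D_j$, so a standard bound gives $\mathcal H^k(H_j(y_j) \cap Q_x) \le C(n,D_j)$ uniformly in $x$; since each $T_{z_j} H_j(y_j)$ is an orthogonal projection, for which $\blw \le 1$, one obtains a uniform bound $\omega_x(\{H_j(y_j)\}_{j=1}^m) \le M < \infty$ valid for all $x \in \R^n$ and all $y_j \in K_j$. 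This uniform bound will supply the domination needed at the end.

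Next I would discretize. For each $j$ choose a nested sequence of finite Borel partitions $\{P^{(\ell)}_{j,i}\}_i$ of $K_j$ with mesh tending to $0$, together with representatives $y^{*,(\ell)}_{j,i} \in P^{(\ell)}_{j,i}$ chosen so that, for a.e.\ $y_j \in K_j$, the representative of the piece containing $y_j$ tends to $y_j$ as $\ell \to \infty$ (dyadic cubes with their centers will do). Apply Proposition \ref{zkprop1} with $E_j := \{ H_j(y^{*,(\ell)}_{j,i}) \}_i$ (combining weights if two representatives produce the same variety) and $N_j(H_j(y^{*,(\ell)}_{j,i})) := \int_{P^{(\ell)}_{j,i}} f_j$. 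Since $\deg H_j(\cdot) \le D_j$ and the $P^{(\ell)}_{j,i}$ tile $K_j \supseteq \operatorname{supp} f_j$, the right-hand side of \eqref{zkineq2} is at most $C_n \prod_{j=1}^m [ D_j \int f_j ]^p$; and, writing
\[ G_\ell(x) := \sum_{i_1,\ldots,i_m} \Bigl( \prod_{j=1}^m \int_{P^{(\ell)}_{j,i_j}} f_j \Bigr)\, \omega_x\bigl(\{H_j(y^{*,(\ell)}_{j,i_j})\}_{j=1}^m\bigr), \]
the left-hand side of \eqref{zkineq2} equals $\int_{\R^n} (G_\ell(x))^p\, dx$. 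Hence $\int_{\R^n}(G_\ell(x))^p\,dx \le C_n \prod_{j=1}^m [D_j \int f_j]^p$ for all $\ell$.

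Finally I would pass to the limit $\ell \to \infty$. Note that $G_\ell(x) = \int \prod_j f_j(y_j)\, \Omega_\ell(x,y)\, dy$, where $\Omega_\ell(x,\cdot)$ is the piecewise-constant approximant to $y \mapsto \omega_x(\{H_j(y_j)\}_{j=1}^m)$ attached to the $\ell$-th partition; the crux is to show $\Omega_\ell(x,y) \to \omega_x(\{H_j(y_j)\}_{j=1}^m)$ for a.e.\ $(x,y)$. This reduces to showing that, for a.e.\ $x$, the map $y \mapsto \omega_x(\{H_j(y_j)\}_{j=1}^m)$ is continuous at a.e.\ $y$: the smooth dependence of $H_j(y_j)$ on $y_j$ makes the varieties and their tangent spaces (at smooth points) vary continuously, whence the integrand $[\blw(\{T_{z_j}H_j(y_j)\}_{j=1}^m)]^{1/p}$ varies continuously by continuity of the Brascamp--Lieb weight in its defining maps (standard Brascamp--Lieb structure theory; see also Section \ref{blsec}), while the weak convergence of the measures $\mathcal H^k|_{H_j(y_j)}$ on bounded sets controls the integration; the only obstruction is that $\partial Q_x$ may meet some $H_j(y_j)$ in positive $\mathcal H^k$-measure, which by Fubini happens only on a null set of $(x,y)$. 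With the uniform domination $\Omega_\ell \le M$ from the first step, the Dominated Convergence Theorem gives $G_\ell(x) \to G_\infty(x) := \int \prod_j f_j(y_j)\, \omega_x(\{H_j(y_j)\}_{j=1}^m)\, dy$ for a.e.\ $x$, and Fatou's lemma (valid since $p>0$) yields $\int_{\R^n} (G_\infty(x))^p\, dx \le \liminf_\ell \int_{\R^n} (G_\ell(x))^p\, dx \le C_n \prod_{j=1}^m [D_j \int f_j]^p$, which is \eqref{zkineq25} in the reduced case; undoing the reduction completes the argument.

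The step I expect to be the main obstacle is the continuity claim in the last paragraph: pinning down that the Brascamp--Lieb weight $\blw$ is (two-sidedly, not merely upper-semi-) continuous as a function of the tuple of projections, and verifying that the boundary effect from $\partial Q_x$ is genuinely negligible. Everything else is bookkeeping with monotone and dominated convergence.
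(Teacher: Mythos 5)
Your proposal is correct and follows essentially the same route as the paper: discretize each $U_j$ into small cubes, apply Proposition \ref{zkprop1} with weights $N_j$ given by the integrals of $f_j$ over the cubes, and pass to the limit using continuity of $y \mapsto \omega_x(\{H_j(y_j)\}_{j=1}^m)$. The only difference is the limiting mechanism --- the paper replaces $\omega_x(\{H_j(y_j)\}_{j=1}^m)$ by $\inf_{|z_j-y_j|\le\delta}\omega_x(\{H_j(z_j)\}_{j=1}^m)$, which is dominated by the discretized quantity and recovers $\omega_x$ as $\delta\to 0^+$, so Monotone Convergence suffices and no uniform upper bound, Dominated Convergence, or Fatou step is needed; the continuity of the Brascamp--Lieb weight that you flag as the main obstacle is exactly what the paper imports from Bennett, Bez, Cowling, and Flock \cite{bbcf2017}.
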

\begin{proof}
Because $H_j(y_j)$ depends smoothly on $y$,  $\omega_x(\{H_j(y_j)\}_{j=1}^m$ is known to be a continuous function of $y_1,\ldots,y_m$ as a result work by Bennett, Bez, Cowling, and Flock \cite{bbcf2017}.
For any $\delta > 0$, decompose $\R^n$ into a nonoverlapping union of boxes of side length $\delta$. Fix arbitrary compact sets $K_j \subset U_j$ and let ${\mathcal Q}_j(\delta)$ be a finite collection of these cubes which covers $K_j$. For each $j$, let $E_j$ be the collection of varieties given by
\[ E_j := \set{ H }{ H = H_j(y) \mbox{ for } y \mbox{ at the center of a cube } Q' \in {\mathcal Q}_j(\delta)}. \]
For convenience, let $H_j(Q')$ also denote the variety $H_j(y)$ when $y$ is taken to be the center of $Q'$.
Fix any nonnegative measurable functions $f_j$ on $U_j$ and let 
\[ N_j(H) := \mathop{\sum_{Q' \in {\mathcal Q}_j(\delta)}}_{H_j(Q') = H} \int_{Q' \cap K_j} f_j \]
The left-hand side of \eqref{zkineq2} is exactly equal to 
\begin{equation} \int \left[ \int_{K_m} \cdots \int_{K_1} \prod_{j=1}^m f_j(y_j) \omega_x(\{H_j(y_j')\}_{j=1}^m) d y_1 \cdots dy_m \right]^p dx, \label{zkineq3} \end{equation}
where $y_j'$ is the center of the cube $Q' \in Q_j(\delta)$ containing $y_j$ (which is uniquely defined for a.e. $y_j$). By Monotone Convergence and continuity of the reciprocal of the Brascamp-Lieb constant, 
\[
\begin{aligned}
\int & \left[ \int_{K_m} \cdots \int_{K_1} \prod_{j=1}^m f_j(y_j) \omega_x(\{H_j(y_j)\}_{j=1}^m) d y_1 \cdots dy_m \right]^p dx \\
= & \int  \lim_{\delta \rightarrow 0^+} \left[ \int_{K_m} \cdots \int_{K_1} \prod_{j=1}^m f_j(y_j) \inf_{|z_j - y_j| \leq \delta} \omega_x(\{H_j(z_j)\}_{j=1}^m) d y_1 \cdots dy_m \right]^p dx \\
= & \lim_{\delta \rightarrow 0^+} \int \left[ \int_{K_m} \cdots \int_{K_1} \prod_{j=1}^m f_j(y_j) \inf_{|z_j - y_j| \leq \delta} \omega_x(\{H_j(z_j)\}_{j=1}^m) d y_1 \cdots dy_m \right]^p dx.
\end{aligned}
\]
For each $\delta > 0$, 
 \[ \inf_{|z_j - y_j| \leq \delta} \omega_x(\{H_j(z_j)\}_{j=1}^m) \leq \omega_x(\{H_j(y_j')\}_{j=1}^m) \]  %\eqref{zkineq3}
 because $|y -y'| \leq \delta$. But then by \eqref{zkineq2}, this means that the limit of \eqref{zkineq3} as $\delta \rightarrow 0^+$ is dominated by
\[  C_n \prod_{j=1}^m \left[ D_j \sum_{Q' \in \mathcal Q_j(\delta)} \int_{Q' \cap K_j} f_j \right]^p = C_n \prod_{j=1}^m \left[ D_j \int_{K_j} f_j \right]^p\]
as desired.
Because each $K_j$ is arbitrary, a second application of Monotone Convergence establishes the proposition.
 \end{proof}
 
 \subsection{Deduction of Theorem \ref{kakeyathm} from Proposition \ref{zkprop2}}
 \label{lastksec}
 \begin{proof}[Proof of Theorem \ref{kakeyathm}]
 As already observed, it suffices to assume each $f_j$ is smooth and compactly supported. As the submanifolds $\ri{y}$ depend smoothly on $y$, it follows from Proposition \ref{zkprop2} that for any $\delta > 0$,
  \[ \begin{aligned} 
 \int \left[ \int \cdots \int\prod_{j=1}^m f_j(y_j) \omega_x (\{ \delta^{-1} \ri{y_j} \}_{j=1}^m) dy_1 \cdots dy_m \right]^p dx
 \lesssim \prod_{j=1}^m \left( \int f_j \right)^{p}
 \end{aligned} \]
 for some implicit constant depending only on $n$ and the maximum degree of any $\ri{y}$.
After a change of variables $x \mapsto \delta^{-1} x$,
  \[ \begin{aligned} 
 \int \left[ \delta^{-m(n-k)} \int \cdots \int\prod_{j=1}^m f_j(y_j) \omega_{\delta^{-1} x} (\{ \delta^{-1} \ri{y_j} \}_{j=1}^m)  dy_1 \cdots dy_m \right]^p dx \\
 \lesssim \prod_{j=1}^m \left(\int f_j \right)^{p} 
 \end{aligned} \]
 uniformly for all positive $\delta$, where the factor $\delta^{-m(n-k)p} = \delta^{-n}$ arises as the Jacobian determinant of the change of variables.
 
 By Lemma \ref{specialdf} from the Appendix, it is possible to use an alternate defining function $\tilde \rho$ which exhibits better uniformity properties than $\rho$ itself might. In particular, for the defining function $\tilde \rho$ constructed there, the matrices $D_x \tilde \rho$ are exactly the orthogonal projections onto the orthogonal complement of the tangent space of $\ri{x}$ at $x$ and smallness of $|\tilde \rho(x,y)|$ implies proximity of $x$ to $\ri{y}$ in a uniform way: $|\tilde \rho(x,y)| \leq \delta \kappa_n$ for sufficiently small $\delta$ implies that the set $x + (-\delta,\delta)^n$ intersects $\ri{y}$ in a set of $k$-dimensional Hausdorff measure at least comparable to $\delta^k$. To proceed, one first observes that
 $T_{x_j} \delta^{-1} \ri{y_j}$ is the projection from $\R^n$ onto the orthogonal complement of the tangent space at $x_j \in \delta^{-1} \ri{y_j}$. By rescaling, the tangent plane of $\delta^{-1} \ri{y}$ at $x_j$ is simply a shift of the tangent plane at $\delta x_j$ of $\ri{y_j}$, so $T_{x_j} \delta^{-1} \ri{y_j} = D_{x_j} \tilde \rho(\delta x_j,y_j)$. Consequently, if $Q^{\delta}_x$ denotes the set $x + [-\delta/2,\delta/2]^n$, it follows that
 \begin{align*} 
  \omega_{\delta^{-1} x}  & ( \{ \delta^{-1} \ri{y_j} \}_{j=1}^m) \\ 
 = & \int_{\prod_{j=1}^m ((\delta^{-1}\ri{y_j}) \cap Q_{\delta^{-1}x})} \left[ \blw(\{T_{x_j} \delta^{-1} \ri{y_j} \}_{j=1}^m ) \right]^{\frac{1}{p}} d \mathcal H^{k}(x_1) \cdots d \mathcal H^k(x_m) \\
 \geq & \mathop{\inf_{x_1 \in (\delta^{-1} \ri{y_1}) \cap Q_{\delta^{-1}x},\ldots}}_{x_m \in (\delta^{-1} \ri{y_m}) \cap Q_{\delta^{-1}x}} [ \blw ( \{ D_{x_j} \tilde \rho(\delta x_j,y_j) \}_{j=1}^m)]^\frac{1}{p}  \prod_{j=1}^m {\mathcal H}^k \left( \delta^{-1} \ri{y_j} \cap Q_{\delta^{-1} x} \right) \\
 = & \mathop{\inf_{x_1 \in \ri{y_1} \cap Q^\delta_x,\ldots}}_{x_m \in \ri{y_m} \cap Q_x^\delta} [ \blw ( \{ D_{x_j} \tilde \rho(x_j,y_j) \}_{j=1}^m)]^\frac{1}{p}  \delta^{-km} \prod_{j=1}^m {\mathcal H}^k \left(\ri{y_j} \cap Q_{x}^\delta \right).
 \end{align*}
  By Lemma \ref{specialdf}, then, it follows that for any compact subset $K \subset \inc$, there is some open set $U$ containing $K$ such that whenever $\delta$ is sufficiently small,
  \begin{align*}\omega_{\delta^{-1} x} & ( \{ \delta^{-1} \ri{y_j} \}_{j=1}^m) \\ & \geq c_n^m \mathop{\inf_{x_1 \in \ri{y_1} \cap Q^\delta_x,\ldots}}_{x_m \in \ri{y_m} \cap Q_x^\delta} [ \blw ( \{ D_{x_j} \tilde \rho(x_j,y_j) \}_{j=1}^m)]^\frac{1}{p}  \prod_{j=1}^m \chi_{|\tilde \rho(x,y_j)| < \delta \kappa_n/2} \end{align*}
provided $(x,y_j) \in U$ for all $j=1,\ldots,m$. 

 Now the coarea formula dictates that for any continuous function $f_j$
 \[ \int f(y_j) \chi_{|\tilde \rho(x,y_j)| < \delta \kappa_n/2} dy_j = \int_{[-\delta \kappa_n/2,\delta \kappa_n/2]^{n-k}} \int_{\tilde \rho(x,\cdot) = u} f_j(y_j) d \sigma_u(y_j) d u, \]
where $d \sigma_u$ is a measure of continuous density with respect to $k$-dimensional Hausdorff measure on the level set $\set{y_j \in \R^n}{\tilde \rho(x,y_j) = u}$, which is a well-defined $k$-dimensional submanifold of $\R^n$ when $u$ is sufficiently small. In the special case $u = 0$, $ \sigma_0$ is exactly the measure $d \sigma$ on $\li{x}$ which was defined in \eqref{themeasure} (assuming that $\rho$ there is replaced by $\tilde \rho$). Since everything is continuous as a function of $\delta$ when $f$ is assumed to be continuous with compact support, the limit as $\delta \rightarrow 0^+$ of the quantity
\begin{align*}  \delta^{-m(n-k)} \int \cdots \int \prod_{j=1}^m f_j(y_j) \mathop{\inf_{x_1 \in \ri{y_1} \cap Q_{x}^\delta,\ldots}}_{x_m \in \ri{y_m} \cap Q_{x}^\delta}  [ \blw ( \{ D_{x_j} \tilde \rho(x_j,y_j) \}_{j=1}^m)]^\frac{1}{p} \\ \cdot \prod_{j=1}^m \chi_{|\tilde \rho(x,y_j)| < \delta \kappa_n/2} dy_1 \cdots d y_m 
\end{align*}
exists and equals a constant times
\[ \int_{\li{x}} \cdots \int_{\li{x}} [ \blw (\{D_{x} \tilde \rho(x,y_j) \}_{j=1}^m)]^{\frac{1}{p}} \prod_{j=1}^m f_j(y_j) d \sigma(y_1) \cdots d \sigma(y_m). \]
Thus
\begin{align*}
\int & \left[ \int_{\li{x}} \cdots \int_{\li{x}} [ \blw (\{D_{x} \tilde \rho(x,y_j) \}_{j=1}^m)]^{\frac{1}{p}} \prod_{j=1}^m f_j(y_j) d \sigma(y_1) \cdots d \sigma(y_m) \right]^p dx \\
 \leq & \limsup_{\delta \rightarrow 0^+} \int \left[ \frac{1}{\delta^{m(n-k)}} \int \cdots \int \omega_{\delta^{-1} x} ( \{ \delta^{-1} \ri{y_j}\}_{j=1}^m) \prod_{j=1}^m f_j(y_j) d y_1 \cdots d y_m \right]^p dx \\
 \lesssim & \prod_{j=1}^m \left( \int f_j \right)^p,
\end{align*}
which is 
the desired inequality \eqref{kakeya} with $\rho$ replaced by $\tilde \rho$.

To revert from $\tilde \rho$ back to $\rho$, it simply remains to assume that switching the defining function in this way leaves the left-hand side of \eqref{kakeya} unchanged.
This follows from the identify
\[ \left[ \blw( \{ M_j \pi_j \}_{j=1}^m ) \right]^{\frac{1}{p}} = \left[ \blw ( \{\pi_j\}_{j=1}^m) \right]^{\frac{1}{p}} \prod_{j=1}^m |\det M_j| \]
for Brascamp-Lieb constants, where $M_j$ are any invertible matrices. The inequality is easily proved by replacing each $f_j(u)$ with $f_j(M_j u)$ in \eqref{datadef}. Since $\tilde \rho$ differs from any fixed defining function $\rho$ by multiplication on the left by an invertible matrix, it follows by Lemma \ref{specialdf} that
\[ [ \blw(\{D_x \rho(x,y_j)\}_{j=1}^m) ]^{\frac{1}{p}} d \sigma(y_1) \cdots d \sigma (y_n) \]
is unchanged when defined using $\tilde \rho$ instead of $\rho$ itself because the extra factors of $\det ( D_x \rho (D_x \rho)^T)$ arising from the Brascamp-Lieb constant are exactly cancelled by the extra factors arising from the measure $d \sigma$. This completes the proof.
\end{proof}

 \section{The Brascamp-Lieb constant and Geometric Invariant Theory}
 \label{blsec}
The next major task is to establish several general facts about the Brascamp-Lieb constant and its connection to Geometric Invariant Theory. These facts play a central role in understanding and verifying the main hypothesis \eqref{nonconcentrate} of Theorem \ref{genradonthm}.  Throughout this section, for each $j=1,\ldots,m$,  each $\pi_j : \R^n \rightarrow \R^{n_j}$ will be an arbitrary linear map and each $p_j$ will be a real number in $[0,1]$. Following the usual convention, let the Brascamp-Lieb constant $\mathrm{BL}(\{\pi_j, p_j\}_{j=1}^m)$ be defined to equal the smallest nonnegative real number such that
 \begin{equation} \int_{\R^n} \prod_{j=1}^m \left( f_j ( \pi_j (x)) \right)^{p_j} dx \leq \mathrm{BL}(\{\pi_j, p_j\}_{j=1}^m) \prod_{j=1}^m \left( \int_{\R^{n_j}} f_j \right)^{p_j} \label{blineq} \end{equation}
for all nonnegative measurable functions $f_j \in L^1(\R^{n_j})$. When $p_1 = \cdots = p_m = \frac{n}{m(n-k)}$ and $n_1 = \cdots = n_m = n-k$, note that the Brascamp-Lieb constant is merely the reciprocal of the already-defined Brascamp-Lieb weight \eqref{datadef}. This special case will of course be the most important one for the purposes of Theorem \ref{genradonthm}, but throughout most of the section the $p_j$'s will be allowed to differ.

The overall goal of this section is to establish the existence of certain invariant polynomials in the entries of the $\pi_j$'s which give meaningful quantitative information about the Brascamp-Lieb constant. These polynomials should be thought of as generalizations of the determinant. For this description to be useful, it will be critical to show not only existence of such polynomials, but also to provide a means by which they may be explicitly constructed, so that they can be used as computational tools.
 
 \subsection{Brascamp-Lieb and minimum vectors}
 
 The first major result of this section is the following lemma, which establishes an identity for the Brascamp-Lieb constant involving an infimum analogous to the one relating to minimum vectors in the sense of Kempf and Ness \cite{kn1979}:
 \begin{lemma}
 Suppose that the exponents $p_j$ and dimensions $n_j$ satisfy \label{bl2mv}
 \begin{equation}  \sum_{j=1}^m \frac{p_j n_j}{n} = 1. \label{blscale} \end{equation}
 (Note: it is well-known and can be seen from scaling that \eqref{blscale} is necessary for the finiteness of the Brascamp-Lieb constant.) 
Then $\mathrm{BL}(\{\pi_j, p_j\}_{j=1}^m)$ satisfies
 \begin{equation} \left[ \mathrm{BL}(\{\pi_j, p_j\}_{j=1}^m) \right]^{-1} = \mathop{\mathop{\inf_{A_1 \in \SL{n_1},\ldots,}}_{A_m \in \SL{n_m},}}_{A \in \SL{n}} \prod_{j=1}^m n_j^{-\frac{p_j n_j}{2}} |||A_j \pi_j A^*|||^{p_j n_j},\label{minimumvector} \end{equation}
 where $||| \cdot |||$ denotes the Hilbert-Schmidt norm computed with respect to the standard bases and $\SL{n_j}$ is the Lie group of invertible $n_j \times n_j$ real matrices with determinant $1$.
 \end{lemma}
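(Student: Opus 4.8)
The plan is to reduce \eqref{minimumvector} to an optimization over Gaussian test functions by means of Lieb's theorem on Gaussian extremizers for Brascamp--Lieb inequalities, and then to evaluate that optimization using two instances of the arithmetic--geometric mean (AM--GM) inequality: one which writes the determinant of a positive definite matrix as an infimum of normalized traces over $\SL{n}$, and a weighted one which invokes the scaling hypothesis \eqref{blscale} to remove certain scalar degrees of freedom.

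The first step is to invoke Lieb's theorem: testing \eqref{blineq} against the Gaussians $f_j(y) = e^{-\pi \langle M_j y, y \rangle}$, where $M_j$ ranges over the positive definite symmetric matrices on $\R^{n_j}$, and taking the supremum yields
\begin{equation*} \left[ \mathrm{BL}(\{\pi_j, p_j\}_{j=1}^m) \right]^{-1} = \inf_{M_1, \ldots, M_m > 0} \frac{\left( \det \sum_{j=1}^m p_j \pi_j^* M_j \pi_j \right)^{1/2}}{\prod_{j=1}^m \left( \det M_j \right)^{p_j/2}}. \end{equation*}
Before continuing I would dispose of the degenerate situations in which $\bigcap_j \ker \pi_j \neq \{0\}$ or some $\pi_j = 0$: in either case $\mathrm{BL} = \infty$, while the right side of \eqref{minimumvector} is directly seen to vanish (shrink $A_j$ on the image of $\pi_j$, or stretch $A^* A$ along a common kernel vector of all the $\pi_j$), so \eqref{minimumvector} holds as $0 = 0$. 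In the remaining cases the identity will emerge from the computation below, and in particular the right side of \eqref{minimumvector} will then automatically vanish in those subtler cases where $\mathrm{BL} = \infty$ on account of an intermediate subspace.

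Assume now that $\bigcap_j \ker \pi_j = \{0\}$ and that no $\pi_j$ vanishes. I would factor each $M_j = c_j N_j$ with $c_j := (\det M_j)^{1/n_j} > 0$ and $\det N_j = 1$, so that $\det M_j = c_j^{n_j}$ and $P := \sum_j p_j c_j \pi_j^* N_j \pi_j$ is positive definite. Next I would use the elementary identity
\begin{equation*} \det P = \inf_{A \in \SL{n}} \left( \frac{\tr( A P A^* )}{n} \right)^{n}, \end{equation*}
valid for any positive definite $P$ on $\R^n$ and following from AM--GM applied to the positive eigenvalues of $A P A^*$ together with $\det( A P A^* ) = \det P$, the infimum being attained at $A = (\det P)^{1/(2n)} P^{-1/2}$. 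Writing $Q := A^* A$ and using that every positive definite matrix of determinant $1$ equals $B^* B$ for some $B$ in the corresponding special linear group, we may set $N_j = A_j^* A_j$ and observe that $\tr( Q \pi_j^* N_j \pi_j ) = |||A_j \pi_j A^*|||^2$. Combining these steps gives
\begin{equation*} \left[ \mathrm{BL} \right]^{-1} = \inf_{\substack{A_j \in \SL{n_j},\ A \in \SL{n} \\ c_1, \ldots, c_m > 0}} \frac{\left( \frac{1}{n} \sum_{j=1}^m p_j c_j\, |||A_j \pi_j A^*|||^2 \right)^{n/2}}{\prod_{j=1}^m c_j^{p_j n_j/2}}. \end{equation*}

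The last step is to carry out the infimum over $c_1, \ldots, c_m$ by the weighted AM--GM inequality with weights $w_j := p_j n_j / n$, which sum to $1$ by \eqref{blscale}:
\begin{equation*} \frac{1}{n} \sum_{j=1}^m p_j c_j\, |||A_j \pi_j A^*|||^2 = \sum_{j=1}^m w_j \left( c_j \frac{|||A_j \pi_j A^*|||^2}{n_j} \right) \geq \prod_{j=1}^m \left( c_j \frac{|||A_j \pi_j A^*|||^2}{n_j} \right)^{w_j}, \end{equation*}
with equality attained by choosing the $c_j$ so that all the bracketed quantities coincide (possible since each $|||A_j \pi_j A^*||| > 0$ under our standing assumption). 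Raising both sides to the power $n/2$ produces on the right a factor $\prod_j c_j^{w_j n / 2} = \prod_j c_j^{p_j n_j / 2}$ which cancels the denominator exactly, so that the infimum over $c_1, \ldots, c_m$ equals $\prod_j \left( |||A_j \pi_j A^*|||^2 / n_j \right)^{p_j n_j/2} = \prod_j n_j^{-p_j n_j/2} |||A_j \pi_j A^*|||^{p_j n_j}$. Taking the remaining infimum over $A_1, \ldots, A_m$ and $A$ then gives the right side of \eqref{minimumvector} exactly. The step I expect to require the most care is the degeneracy bookkeeping together with the verification that equality in both AM--GM steps is genuinely attainable, which is what licenses interchanging and collapsing the nested infima; apart from that, every step is routine linear algebra, the one substantial external ingredient being Lieb's Gaussian-extremizer theorem.
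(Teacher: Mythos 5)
Your proposal is correct and follows essentially the same route as the paper: Lieb's Gaussian-extremizer theorem, then AM--GM on the eigenvalues of the conjugated quadratic form to pass to an infimum over $\SL{n}$, then a weighted AM--GM over scalar normalizations using \eqref{blscale}. The only differences are organizational (you dispose of the degenerate kernel cases at the outset, whereas the paper absorbs them into a limiting argument over singular directions of $A$, and you extract the scalars $c_j$ before rather than after the second AM--GM), and these do not change the substance of the argument.
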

 
Before proceeding to the proof, it is worth observing that the direct link between the computation of the Brascamp-Lieb constant and Geometric Invariant Theory given by \eqref{minimumvector} provides a rather immediate interpretation of the work of
Garg, Gurvits, Oliveira, and Wigderson \cite{ggow2017}. Geometric Brascamp-Lieb data as they define it is exactly the set of data which are critical points of the functional on the right-hand side of \eqref{minimumvector} when $A_1,\ldots,A_n,A$ are all identity matrices (i.e., geometric Brascamp-Lieb data correspond to minimum vectors in GIT). The functional can be shown to be convex along flows $(A_1,\ldots,A_m,A) := (\exp(t M_1),\ldots,\exp(t M_m), \exp(t M))$, $t \in \R$, so critical points are automatically global minima. The iterative method in \cite{ggow2017} to compute the Brascamp-Lieb constant approximates  the argument of the infimum (argmin) of \eqref{minimumvector} when it exists by alternately computing the argmin $(A_1,\ldots,A_m)$ for fixed $A$ in one step and the argmin $A$ for fixed $(A_1,\ldots,A_m)$ in the subsequent step. (Also note that when the data is merely semi-stable and no global minimum exists, the algorithm instead produces a minimum vector with closed orbit contained in the original non-closed orbit.)

 \begin{proof}
 Lieb \cite{lieb1990} established that any Brascamp-Lieb inequality has an extremizing sequence of Gaussians, which implies that
\[ \left[ \mathrm{BL}(\{\pi_j, p_j\}_{j=1}^m) \right]^{-1} = \mathop{\inf_{A_1 \in \GL{n_1},\ldots,}}_{A_m \in \GL{n_m}}  \left[ \frac{ \det \left(  \sum_{j=1}^m p_j \pi_j^* A_j^* A_j \pi_j \right)}{ \prod_{j=1}^m ( \det A_j^* A_j )^{p_j}} \right]^\frac{1}{2}. \]
For any matrix $A \in \SL{n}$,
\[ \sum_{j=1}^m p_j ||| A_j \pi_j A^* |||^2 = \sum_{j=1}^m p_j \tr (A \pi_j^* A_j^* A_j \pi_j A^*) = \tr \left( \sum_{j=1}^m p_j A \pi_j^* A_j^* A_j \pi_j A^* \right). \]
Both the trace and determinant of the matrix $\sum_{j=1}^m p_j A \pi_j^* A_j^*A_j \pi_j A^*$ can be expressed in terms of its eigenvalues, all of which are nonnegative. By the inequality of arithmetic and geometric means, abbreviated as the AM-GM inequality, applied to the eigenvalues, it follows that
\[ \left| \sum_{j=1}^m \frac{p_j}{n} ||| A_j \pi_j A^*|||^2 \right|^n \! \! \geq  \det  \sum_{j=1}^m p_j A \pi_j^* A_j^* A_j \pi_j A^*   = \det \sum_{j=1}^m p_j \pi_j^* A_j^* A_j \pi_j . \]
When the infimum of the left-hand side is taken over all $A \in \SL{n}$, the inequality must be equality; to see this, fix $M := \sum_{j=1}^m p_j \pi_j^* A_j^* A_j \pi_j$. When $M$ is invertible, equality must hold when $A := M^{-1/2} (\det M)^{1/(2n)}$; if $M$ has a kernel of dimension $\ell > 0$, let $P$ be orthogonal projection onto the kernel. Equality holds in the limit $t \rightarrow \infty$ when $A_t := t^{1/\ell} P + t^{-1/(n-\ell)} (I - P)$. Therefore
\[ \left[ \mathrm{BL}(\{\pi_j, p_j\}_{j=1}^m) \right]^{-1} = \mathop{\mathop{\inf_{A_1 \in \GL{n_1},\ldots,}}_{A_m \in \GL{n_m},}}_{A \in \SL{n}} \left[ \frac{  \sum_{j=1}^m p_j ||| A_j \pi_j A^*|||^2 }{ n \prod_{j=1}^m |\det A_j|^{\frac{2p_j}{n}} } \right]^{\frac{n}{2}}. \]
A similar application of the AM-GM inequality also gives that
\[\mathop{\inf_{t_1 > 0, \ldots,}}_{t_m > 0}  t_1^{-\frac{2 p_1 n_1}{n}} \cdots t_m^{-\frac{2p_m n_m}{n}} \sum_{j=1}^m \frac{p_j n_j}{n}  t_j^2 \frac{ ||| A_j \pi_j A^* |||^2}{n_j} =  \prod_{j=1}^m \left( \frac{|||A_j \pi_j A^* |||^2}{n_j}\right)^{\frac{p_j n_j}{n}}. \]
To see this, the left-hand side can be seen to be greater than or equal to the right-hand side by using the version of AM-GM inequality which raises the term
$$t_1^{-\frac{2 p_1 n_1}{n}} \cdots t_m^{-\frac{2p_m n_m}{n}} t_j^2 \frac{||| A_j \pi_j A^*|||^2}{n_j}$$
to the power $p_j n_j/n$, which is allowed precisely because \eqref{blscale} guarantees that the exponents sum to $1$. The reverse inequality can be established by fixing $t_j := ( ||| A_j \pi_j A^*|||^2/n_j )^{-1/2}$ when all such constants are well-defined or by an appropriate limiting argument if any such $t_j$ happens to be infinite. Writing each matrix $A_j$ as a nonzero constant times a matrix of determinant $1$ then gives that
\[ \left[ \mathrm{BL}(\{\pi_j, p_j\}_{j=1}^m) \right]^{-1} = \mathop{\mathop{\inf_{A_1 \in \SL{n_1},\ldots,}}_{A_m \in \SL{n_m},}}_{A \in \SL{n}} \prod_{j=1}^m n_j^{-\frac{p_j n_j}{2}} |||A_j \pi_j A^*|||^{p_j n_j}. \]
This is exactly \eqref{minimumvector}.
\end{proof}

Before continuing, it will be helpful record an important calculation relating to Lemma \ref{bl2mv} which will be useful later.
As it relates to the hypothesis \eqref{nonconcentrate} of Theorem \ref{genradonthm}, Lemma \ref{bl2mv} establishes that
\begin{equation} \left[ \blw(\{\pi_j\}_{j=1}^m) \right]^{\frac{1}{p}}  = \mathop{\inf_{A_1,\ldots,A_m \in \SL{n-k}}}_{A \in \SL{n}}  \frac{1}{(n-k)^{\frac{m(n-k)}{2}}  }  \prod_{j=1}^m ||| A_j \pi_j A^*|||^{n-k}  \label{minimumvector2} \end{equation}
when each $\pi_j$ is an $(n-k) \times n$ matrix and $1/p = m(n-k)/n$.

The next step in this section is to give an abstract proof of the existence of invariant polynomials in the entries of the $\pi_j$'s which strongly quantify the magnitude of the Brascamp-Lieb constant. Following this, we will consider the question of how to more explicitly find these polynomials.

A few minor reductions are in order. The first is that attention will be restricted to only those cases in which each $p_j$ is rational. By Theorem 1.13 of Bennett, Carbery, Christ, and Tao \cite{bcct2008}, the extreme points of the convex set 
\[ P := \set{ \{p_j\}_{j=1}^m \in [0,1]^m}{ \mathrm{BL}(\{\pi_j, p_j\}_{j=1}^m) < \infty} \]
all have rational exponents $\{p_j\}_{j=1}^m$, and likewise rational exponents play a central role in Theorem \ref{genradonthm}.
It may also be assumed that no $p_j$ equals zero since the inequality \eqref{blineq} will be trivially independent of $\pi_j$ for any index $j$ such that $p_j = 0$, meaning that one can simply reduce $m$ and consider the Brascamp-Lieb inequality for a strictly smaller number of $\pi_j$'s.

The expression \eqref{minimumvector} has deep connections to the theory of minimum vectors in Geometric Invariant Theory. Pursuing this analogy, it is natural to make a connection between $\mathrm{BL}(\{\pi_j,p_j\}_{j=1}^m)$ and polynomials invariant under the underlying group representation
$\rho$ of $\SL{n_1} \times \cdots \times \SL{n_m} \times \SL{n}$ defined by
\begin{equation} \rho_{(A_1,\ldots,A_m,A)} (\{ \pi_j \}_{j=1}^m) := \{ A_j \pi_j A^* \}_{j=1}^m. \label{blgroup} \end{equation}
Let $\Phi$ be any nonzero polynomial function of the matrices $\{\pi_j\}_{j=1}^m$ which is homogeneous of degree $d_j > 0$ in each $\pi_j$ and is $\rho$-invariant, i.e.,
\begin{equation} \Phi( \{ \lambda_j \pi_j \}_{j=1}^m ) = \lambda_1^{d_1} \cdots \lambda_m^{d_m} \Phi ( \{ \pi_j \}_{j=1}^m) \mbox{ for all } \lambda_1,\ldots,\lambda_m \in \R \label{algebra1} \end{equation}
and
\begin{equation} \Phi (\{ A_j \pi_j A^* \}_{j=1}^m) = \Phi ( \{\pi_j\}_{j=1}^m ) \label{algebra2} \end{equation}
whenever $\det A_1 = \cdots = \det A_m = 1 = \det A$.
If $||| \Phi |||$ is the maximum of $|\Phi|$ on all $m$-tuples $\{\tilde \pi_j\}_{j=1}^m$ such that $||| \tilde \pi_j ||| \leq 1$ for all $j=1,\ldots,m$, then scaling dictates that
\[ ||| \Phi ||| \prod_{j=1}^m ||| A_j \pi_j A^* |||^{d_j} \geq | \Phi( \{A_j \pi_j A^* \}_{j=1}^m) | =  | \Phi( \{ \pi_j \}_{j=1}^m ) | \]
for all inputs $\{\pi_j\}_{j=1}^m$. If each degree $d_j$ happens to satisfy
\begin{equation} \frac{p_1 n_1}{d_1} = \cdots = \frac{p_m n_m}{d_m} = \frac{1}{s_\Phi} \label{algebra3} \end{equation}
for some real number $s_\Phi$, then \eqref{minimumvector} implies that
\begin{equation}  \left[ \mathrm{BL}(\{\pi_j, p_j\}_{j=1}^m) \right]^{-1}\geq \left( \prod_{j=1}^m n_j^{-\frac{p_j n_j}{2}} \right) ||| \Phi|||^{-\frac{1}{s_\Phi}} | \Phi(\{\pi_j\}_{j=1}^m) |^{\frac{1}{s_\Phi}}. \label{blblock} \end{equation}
In the specific case relating to Theorem \ref{genradonthm}, the constraint \eqref{algebra3} is trivially satisfied whenever $d_1 = \cdots = d_m$ and \eqref{minimumvector2} yields the inequality
\begin{equation}   \left[ W (\{ \pi_j \}_{j=1}^m) \right]^{\frac{1}{p}}   \geq (n-k)^{-\frac{m(n-k)}{2}} ||| \Phi|||^{-\frac{n-k}{d}} | \Phi(\{\pi_j\}_{j=1}^m) |^{\frac{n-k}{d}}. \label{blblock2} \end{equation}
The following lemma establishes that the collection of all such invariant polynomials can be used to compute the order of magnitude of the Brascamp-Lieb constant:
\begin{lemma}
Suppose that the exponents $\{p_j\}_{j=1}^m \in (0,1]^m$ are rational and satisfy \eqref{blscale}. Let $\mathrm{IP}$ be the collection of all nonzero invariant polynomials $\Phi$ satisfying \eqref{algebra1}, \eqref{algebra2}, and \eqref{algebra3}. Then \label{bl2polylem}
\begin{equation} \left[ \mathrm{BL}(\{\pi_j, p_j\}_{j=1}^m) \right]^{-1} \approx \sup_{\Phi \in \mathrm{IP}} ||| \Phi|||^{-\frac{1}{s_\Phi}} | \Phi(\{\pi_j\}_{j=1}^m) |^{\frac{1}{s_\Phi}} \label{blcompare} \end{equation}
with implicit constants that are independent of $\{\pi_j\}_{j=1}^m$ (where the supremum is understood to be zero if $\mathrm{IP} = \emptyset$). Moreover, there exists a finite subset $\mathrm{IP}_0 \subset \mathrm{IP}$ such that
\[ \sup_{\Phi \in \mathrm{IP}} |||\Phi|||^{-\frac{1}{s_\Phi}} | \Phi( \{\pi_j\}_{j=1}^m)|^{\frac{1}{s_\Phi}} \approx \sup_{\Phi \in \mathrm{IP}_0} |||\Phi|||^{-\frac{1}{s_\Phi}} | \Phi( \{\pi_j\}_{j=1}^m)|^{\frac{1}{s_\Phi}}. \]
\end{lemma}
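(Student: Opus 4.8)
The plan is to prove the two inequalities comprising \eqref{blcompare} separately and to extract the finite subset $\mathrm{IP}_0$ from Hilbert's finiteness theorem; throughout I abbreviate $[\mathrm{BL}]^{-1} := [\mathrm{BL}(\{\pi_j,p_j\}_{j=1}^m)]^{-1}$. One of the two inequalities is essentially already available: applied to a fixed $\Phi\in\mathrm{IP}$, the bound \eqref{blblock} reads $[\mathrm{BL}]^{-1}\ge\left(\prod_{j=1}^m n_j^{-p_jn_j/2}\right)|||\Phi|||^{-1/s_\Phi}|\Phi(\{\pi_j\}_{j=1}^m)|^{1/s_\Phi}$ with a constant depending only on the dimensions, so taking the supremum over $\mathrm{IP}$ gives $[\mathrm{BL}]^{-1}\gtrsim\sup_{\Phi\in\mathrm{IP}}|||\Phi|||^{-1/s_\Phi}|\Phi(\{\pi_j\}_{j=1}^m)|^{1/s_\Phi}$. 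The remaining task is therefore the reverse bound together with the reduction to finitely many $\Phi$, and for this I would work inside the invariant-theory picture indicated by \eqref{blgroup}.

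First I would fix the algebraic setup. Let $\hat G:=\GL{n_1}\times\cdots\times\GL{n_m}\times\GL{n}$ act on $V:=\bigoplus_{j=1}^m\mathrm{Mat}_{n_j\times n}(\R)$ by $(\{A_j\}_j,A)\cdot\{\pi_j\}_j:=\{A_j\pi_jA^*\}_j$, and let $\chi$ be the primitive character of $\hat G$ whose exponents $(a_1,\ldots,a_m,b)$ on $(\det A_1,\ldots,\det A_m,\det A)$ satisfy $a_j/b=p_j$ (this uses rationality of the $p_j$). A short bookkeeping check, using $\sum_j p_jn_j=n$ from \eqref{blscale}, shows that $\mathrm{IP}=\bigcup_{N\ge 1}\left(R_N\setminus\{0\}\right)$, where $R_N$ is the space of polynomials $P$ on $V$ with $P(g\cdot v)=\chi(g)^N P(v)$ for all $g\in\hat G$, and that $s_\Phi=Nb$ for $\Phi\in R_N\setminus\{0\}$. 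By the Hilbert--Nagata theorem ($\hat G$ is reductive in characteristic zero), $R:=\bigoplus_{N\ge 0}R_N$ is a finitely generated $\R$-algebra with $R_0=\R$; I would fix homogeneous generators $\Phi_1,\ldots,\Phi_r$ of positive degree and set $\mathrm{IP}_0:=\{\Phi_1,\ldots,\Phi_r\}$, so that the common zero locus of the $\Phi_i$ equals the null cone $\mathcal N:=\{v\in V:\Phi(v)=0\ \text{for all }\Phi\in\mathrm{IP}\}$. Write $\Psi(v):=\max_{1\le i\le r}|||\Phi_i|||^{-1/s_{\Phi_i}}|\Phi_i(v)|^{1/s_{\Phi_i}}$, which is continuous and vanishes exactly on $\mathcal N$. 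The crucial qualitative input is the dictionary between Brascamp--Lieb feasibility and stability: for every $v=\{\pi_j\}_j\in V$ one has $\mathrm{BL}(\{\pi_j,p_j\}_{j=1}^m)<\infty$ if and only if $v\notin\mathcal N$. I would deduce this by combining Lemma \ref{bl2mv} — which (via Lieb \cite{lieb1990}) expresses $[\mathrm{BL}]^{-1}$ as an infimum over $\SL{n_1}\times\cdots\times\SL{n_m}\times\SL{n}$ — with the Bennett--Carbery--Christ--Tao finiteness criterion \cite{bcct2008}: the BCCT ``scaling plus subspace'' conditions coincide term by term with the Hilbert--Mumford numerical criterion for $\chi$-semistability (one-parameter subgroups of the diagonal torus of $\hat G$ correspond to $\R$-filtrations of $\R^n$, with Hilbert--Mumford weight equal to the defect in the associated subspace inequality), and $\chi$-semistability means precisely that some positive-degree element of $R$, hence some $\Phi_i$, is nonzero at $v$.

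With the dictionary in hand, I would finish by showing $[\mathrm{BL}]^{-1}(v)\le C\,\Psi(v)$ for all $v$ (the inequality is trivial on $\mathcal N$, where both sides vanish) through a homogeneity-and-compactness argument. Both $[\mathrm{BL}]^{-1}$ and $\Psi$ are nonnegative, invariant under $\SL{n_1}\times\cdots\times\SL{n_m}\times\SL{n}$ (for $[\mathrm{BL}]^{-1}$ by the infimum formula of Lemma \ref{bl2mv}, for $\Psi$ by \eqref{algebra2}), and homogeneous of the same multidegree $(p_1n_1,\ldots,p_mn_m)$ under the rescalings $\{\pi_j\}_j\mapsto\{t_j\pi_j\}_j$ (for $[\mathrm{BL}]^{-1}$ this follows from the definition \eqref{blineq} by rescaling each $f_j$, for $\Psi$ from \eqref{algebra1} and \eqref{algebra3}); moreover $[\mathrm{BL}]^{-1}$ is upper semicontinuous as an infimum of continuous functions. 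If the inequality failed there would be $v^{(\ell)}$ with $[\mathrm{BL}]^{-1}(v^{(\ell)})>0$ and $\Psi(v^{(\ell)})/[\mathrm{BL}]^{-1}(v^{(\ell)})\to 0$. Using $\SL{}$-invariance I would move each $v^{(\ell)}$ within its group orbit so that $\prod_j n_j^{-p_jn_j/2}|||\pi_j^{(\ell)}|||^{p_jn_j}\le 2[\mathrm{BL}]^{-1}(v^{(\ell)})$, and then — since there are $m$ rescaling parameters and $\sum_j p_jn_j=n$ — use the rescalings to normalize simultaneously $[\mathrm{BL}]^{-1}(v^{(\ell)})=1$ and $|||\pi_1^{(\ell)}|||=\cdots=|||\pi_m^{(\ell)}|||$; chaining the two normalizations bounds the common norm by a dimensional constant, so $\{v^{(\ell)}\}$ lies in a fixed compact subset of $V$ with $[\mathrm{BL}]^{-1}(v^{(\ell)})=1$ and $\Psi(v^{(\ell)})\to 0$. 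Passing to a convergent subsequence $v^{(\ell)}\to v^\ast$, continuity of $\Psi$ forces $\Psi(v^\ast)=0$, so $v^\ast\in\mathcal N$ and hence $[\mathrm{BL}]^{-1}(v^\ast)=0$ by the dictionary; but upper semicontinuity gives $[\mathrm{BL}]^{-1}(v^\ast)\ge\limsup_\ell[\mathrm{BL}]^{-1}(v^{(\ell)})=1$, a contradiction. Therefore $[\mathrm{BL}]^{-1}\le C\,\Psi=C\sup_{\Phi\in\mathrm{IP}_0}|||\Phi|||^{-1/s_\Phi}|\Phi|^{1/s_\Phi}\le C\sup_{\Phi\in\mathrm{IP}}|||\Phi|||^{-1/s_\Phi}|\Phi|^{1/s_\Phi}$; combining with the first paragraph yields $[\mathrm{BL}]^{-1}\approx\sup_{\Phi\in\mathrm{IP}}|||\Phi|||^{-1/s_\Phi}|\Phi|^{1/s_\Phi}\approx\sup_{\Phi\in\mathrm{IP}_0}|||\Phi|||^{-1/s_\Phi}|\Phi|^{1/s_\Phi}$, which is the lemma.

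I expect the main obstacle to be the qualitative dictionary of the second paragraph — identifying finiteness of the Brascamp--Lieb constant with the datum lying off the null cone of the $\chi$-twisted $\hat G$-action — since this requires carefully reconciling the Bennett--Carbery--Christ--Tao subspace criterion with the Hilbert--Mumford criterion and, because everything takes place over $\R$ rather than $\C$, checking that the real forms of the Hilbert--Mumford and Kempf--Ness theorems apply. The bookkeeping that pins down the exponents of the character $\chi$ and identifies $\mathrm{IP}$ with $\bigcup_{N\ge1}(R_N\setminus\{0\})$ (so that $s_\Phi$ becomes the linear function $Nb$ of the grading) is elementary but must be carried out with care.
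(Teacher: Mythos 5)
Your overall architecture matches the paper's: the lower bound is read off from \eqref{blblock}, the upper bound is obtained by normalizing the data within its $\SL{n_1}\times\cdots\times\SL{n_m}\times\SL{n}$-orbit and under scalar rescalings so that a putative counterexample sequence lives in a compact set with $[\mathrm{BL}]^{-1}=1$, passing to a limit point at which every invariant vanishes, and deriving a contradiction from the fact that positive $[\mathrm{BL}]^{-1}$ forces the data off the null cone; the finite subset $\mathrm{IP}_0$ comes from finite generation of the (semi-)invariant algebra. The genuine divergence is in how you establish the one direction of the ``dictionary'' that the contradiction actually uses, namely $[\mathrm{BL}]^{-1}(v)>0\Rightarrow\Phi(v)\neq 0$ for some $\Phi\in\mathrm{IP}$. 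You route this through the Bennett--Carbery--Christ--Tao finiteness criterion and a term-by-term matching of the subspace conditions with the Hilbert--Mumford numerical criterion for $\chi$-semistability. That correspondence is true (it is essentially the Garg--Gurvits--Oliveira--Wigderson observation), but it is a substantial lemma in its own right — one must handle arbitrary one-parameter subgroups up to conjugation, the filtration/subspace translation, and the passage from $\C$ to $\R$ — and you correctly flag it as the main obstacle. The point worth internalizing is that this detour is unnecessary: Lemma \ref{bl2mv} already \emph{is} the Kempf--Ness bridge. Raising the product $\prod_j|||A_j\pi_jA^*|||^{p_jn_j}$ to the power $q$ (with $p_jn_jq=q_j\in\Z$) exhibits it as the Hilbert--Schmidt norm of the tensor $\Pi^v:=\bigotimes_j\langle x,\pi_j y\rangle^{\otimes q_j}$ moved by the group, so $[\mathrm{BL}]^{-1}(v)>0$ says precisely that $0$ does not lie in the Euclidean closure of the orbit of $\Pi^v$; Birkes' real Hilbert--Mumford theorem then hands you a nonconstant homogeneous invariant $P$ with $P(\Pi^v)\neq 0$, and $P(\Pi^v)$, viewed as a polynomial in $\{\pi_j\}_{j=1}^m$, lies in $\mathrm{IP}$. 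This is the paper's route; it needs no input from \cite{bcct2008} and only the single citation to the real Hilbert--Mumford criterion. Your version of the finiteness claim (generators of the semi-invariant ring $R=\bigoplus_N R_N$, each automatically multihomogeneous of degrees $d_j=n_jp_jNb>0$ and hence in $\mathrm{IP}$) is clean and, if anything, slightly tidier than the paper's choice of $\mathrm{IP}_0$ as the pullbacks $P(\Pi^{\{\pi_j\}})$ of a generating set of invariants on the tensor space; the two choices serve the same purpose in the contradiction. With the dictionary step replaced by the direct argument just described, your proof closes completely.
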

\begin{proof}
The lower bound follows immediately from \eqref{blblock}. The upper bound will be proved by contradiction. Without loss of generality, it may be assumed that data exists such that the left-hand side of \eqref{blcompare} is strictly positive. Suppose for each positive integer $N$, there is some data $\{\pi^N_j\}_{j=1}^m$ such that
\begin{equation} \left[ \mathrm{BL}(\{\pi_j^N, p_j\}_{j=1}^m) \right]^{-1} \geq N \sup_{\Phi \in \mathrm{IP}} |||\Phi|||^{-\frac{1}{s_\Phi}} | \Phi( \{\pi_j^N\}_{j=1}^m)|^{\frac{1}{s_\Phi}}. \label{cont} \end{equation}
By homogeneity of both sides in the data $\{ \pi_j^N \}_{j=1}^m$, it may be assumed that $\mathrm{BL}(\{\pi_j^N, p_j\}_{j=1}^m) = 1$ for each $N$, and by replacing each tuple $\{\pi_j^N\}_{j=1}^m$ with $\rho_{(A_1^N,\ldots,A_m^N,A^N)} (\{\pi_j^N\}_{j=1}^m)$ for some choice of $A_1^N,\ldots,A_m^N$ and $A^N$ for each $N$ which tend to minimizers of the right-hand side of \eqref{minimumvector} as $N \rightarrow \infty$, it may further be assumed that
\[ \prod_{j=1}^m ||| \pi_j^N |||^{p_j n_j} \rightarrow \prod_{j=1}^m n_j^{\frac{p_j n_j}{2}} \]
as $N \rightarrow \infty$. Once again, noting that both sides of \eqref{cont} are homogeneous in $\pi_j$ for each $j$, rescaling individual $\pi_j$'s as necessary allows one to assume that $||| \pi_j^N ||| \rightarrow n_j^{1/2}$ as $N \rightarrow \infty$ for each $j = 1,\ldots,m$. By passing to a subsequence in $N$, this means that $\pi_j^N$ converges to some limiting data for each $j=1,\ldots,m$. Let this limit data be denoted $\{ \pi_j^\infty\}_{j=1}^m$.   Now for any matrices $A_1,\ldots,A_m, A$, by Lemma \ref{bl2mv},
\begin{align*}
 \prod_{j=1}^m & ||| A_j \pi_j^\infty A^* |||^{p_j n_j} = \lim_{N \rightarrow \infty}  \prod_{j=1}^m ||| A_j \pi_j^N A^* |||^{p_j n_j } \\
 & \geq \limsup_{N \rightarrow \infty} \left( \prod_{j=1}^m n_j^{\frac{p_j n_j}{2}} \right) \left[ \mathrm{BL}(\{\pi_j^N, p_j\}_{j=1}^m) \right]^{-1} = \prod_{j=1}^m n_j^{\frac{p_j n_j}{2}}, 
 \end{align*}
so taking an infimum over all $A_1,\ldots,A_m,A$ gives that $\mathrm{BL}(\{\pi_j^\infty, p_j\}_{j=1}^m) \leq 1$. In fact, this inequality must be an equality, which can be seen by simply taking each $A_j$ and $A$ to be the identity. Now for any $\Phi \in \mathrm{IP}$, 
\[ 1 =  \left[ \mathrm{BL}(\{\pi_j^N, p_j\}_{j=1}^m) \right]^{-1} \geq N |||\Phi|||^{-\frac{1}{s_\Phi}} | \Phi( \{\pi_j^N\}_{j=1}^m)|^{\frac{1}{s_\Phi}}, \]
which means that $\Phi(\{\pi_j^N\}_{j=1}^m) \rightarrow 0$ as $N \rightarrow \infty$. By continuity of each $\Phi$, it follows that 
\begin{equation} \left[ \mathrm{BL}(\{\pi_j^\infty, p_j\}_{j=1}^m) \right]^{-1} = 1 \mbox{ and }  \sup_{\Phi \in \mathrm{IP}} |||\Phi|||^{-\frac{1}{s_\Phi}} | \Phi( \{\pi_j^\infty \}_{j=1}^m)|^{\frac{1}{s_\Phi}} = 0. \label{cbackhere} \end{equation}

Since each exponent $p_j$ is rational and nonzero, it must be possible to find positive integers $q_1,\ldots,q_m$ and $q$ such that $p_j n_j = q_j / q$ for each $j$.
Now suppose  that 
\[ \Pi ( \{x_i^1,y_i^1\}_{i=1}^{q_1},\ldots, \{x_i^m,y_i^m\}_{i=1}^{q_m} ) \]
is any real-valued map which is linear in each $x_i^{j} \in \R^{n_j}$ and each $y_i^{j} \in \R^n$ for $i = 1,\ldots,q_j$ and $j=1,\ldots,m$.
The group $\SL{n_1} \times \cdots \times \SL{n_m} \times \SL{n}$ acts on the vector space $V$ of all such $\Pi$ by defining
\begin{align*} \rho_{(A_1,\ldots,A_m,A)} & \Pi ( \{x_i^1,y_i^1\}_{i=1}^{q_1},\ldots, \{x_i^m,y_i^m\}_{i=1}^{q_m} ) \\ & := \Pi ( \{A_1^* x_i^1,A^* y_i^1\}_{i=1}^{q_1},\ldots, \{A_m^* x_i^m,A^* y_i^m\}_{i=1}^{q_m} ).
\end{align*}
Let $\Pi^\infty \in V$ be the multilinear functional given by
\begin{equation} 
\Pi^\infty ( \{x_i^1,y_i^1\}_{i=1}^{q_1},\ldots, \{x_i^m,y_i^m\}_{i=1}^{q_m} ) := \prod_{j=1}^m \prod_{i=1}^{q_j} \ang{ x_i , \pi_j^\infty y_i}
\label{restrpr} \end{equation}
where $\ang{ \cdot , \cdot}$ is the usual inner product on $\R^n$.  The Hilbert-Schmidt norm of $\rho_{(A_1,\ldots,A_m,A)} \Pi^\infty$ is exactly equal to
\[ \prod_{j=1}^m ||| A_j \pi_j^\infty A_j^* |||^{q_j}, \]
so by Lemma \ref{bl2mv}, it follows that
\begin{equation} 1 = \left[ \mathrm{BL}(\{\pi_j^\infty,p_j\}_{j=1}^m) \right]^{-1} = \prod_{j=1}^m n_j^{-\frac{q p_j n_j}{2}} \mathop{\mathop{\inf_{A_1 \in \SL{n_1},\ldots,}}_{A_m \in \SL{n_m},}}_{A \in \SL{n}}  ||| \rho_{(A_1,\ldots,A_m,A)} \Pi^\infty |||. \label{not0closed} \end{equation}
By the real Hilbert-Mumford criterion \cite{birkes1971}, $0$ belongs to the closure of the $\rho$-orbit of $\Pi^\infty$ in the standard topology if and only if $0$ belongs to the Zariski closure; furthermore, $0$ belongs to the Zariski closure if and only if all nonconstant homogeneous $\rho$-invariant polynomials on $V$ vanish on $\Pi^\infty$. Since \eqref{not0closed} guarantees that $0$ is not in the standard closure of the orbit, there must exist a nonconstant homogeneous $\rho$-invariant polynomial $P$ on $V$ such that $P(\Pi^\infty) \neq 0$. If the degree of $P$ is equal to $d$, then $P(\Pi^\infty)$ must itself be a polynomial function of $\{\pi_j\}_{j=1}^m$ which satisfies both \eqref{algebra1} (with $d_j := d q_j$ for each $j$) and \eqref{algebra2}. This means that $P(\Pi^\infty)$ also satisfies \eqref{algebra3} with $s_\Phi := d q$. Thus this polynomial $P(\Pi^\infty)$ contradicts \eqref{cbackhere}.

The the finite subset $\mathrm{IP}_0$ can be taken to be only those polynomials of the form $P(\Pi^\infty)$ for $P$ belonging to any finite generating set of the $\rho$-invariant algebra on $V$, since the contradiction just derived will still hold if $P(\Pi^\infty) = 0$ for all such polynomials.
\end{proof}

\subsection{Invariant polynomials and the Caley $\Omega$ process}

While Lemma \ref{bl2polylem} a the theoretical foundation upon which much of this paper rests, it is necessary to have a more concrete way of describing polynomials in the class $\mathrm{IP}$. To that end, it is useful to appeal to the very old and
well-known fact in invariant theory that invariants associated to the group $\SL{n}$ are generated by application of the ``Cayley $\Omega$ process,'' which is briefly described here as it applies to the more general situation of Brascamp-Lieb invariant polynomials satisfying \eqref{algebra1}, \eqref{algebra2} and \eqref{algebra3}. As before, it will be assumed that the exponents $p_j$ are positive, rational, and satisfy the scaling condition \eqref{blscale}.

If $\Phi$ is any polynomial in $\{\pi_j\}_{j=1}^m$ satisfying \eqref{algebra1}, \eqref{algebra2} and \eqref{algebra3}, then for any matrices $A_1,\ldots,A_m, A$ with strictly positive determinants, by homogeneity and $\rho$-invariance it must be the case that
\begin{equation} \Phi ( \{A_j \pi_j A^*\}_{j=1}^m) = \left[ (\det A)^{s_\Phi} \prod_{j=1}^m (\det A_j)^{p_j s_\Phi} \right] \Phi(\{\pi_j\}_{j=1}^m). \label{detid} \end{equation}
Since matrices with positive determinant form an open set in $\R^{n\times n}$ for all $n$ and since the left-hand side of the identity \eqref{detid} must be a polynomial function in the entries of each $A_j$, this forces $s_\Phi$ and $p_j s_\Phi$ to be positive integers and it further forces \eqref{detid} to hold for all matrices $A_1,\ldots, A_m$ and $A$ even if some of the determinants are zero or negative. 

Let $\Omega_A$ be the Cayley $\Omega$ operator associated to $A$, i.e.,
\[ \Omega_A := \sum_{\sigma \in {\mathfrak S}_n} (-1)^{\sigma} \frac{\partial}{\partial A_{1 \sigma_1}} \cdots \frac{\partial}{\partial A_{n \sigma_n}}. \]
(Here and throughout the remainder of Section \ref{blsec}, $\sigma$ will denote a permutation rather than referring to the measure \eqref{themeasure}.)
The Cayley $\Omega$ operator associated to $A$ satisfies the identity
\[ \Omega^{s}_{A} (\det A)^s = c_{n,s} > 0 \]
for all positive integers $s$ and also satisfies $\Omega_A f( B A) = (\det B) (\Omega_A f) (B A)$ for any $n \times n$ matrix $B$ and any $C^n$ function $f$ of $\R^{n \times n}$ (for both facts, see Sturmfels \cite{sturmfelsbook}). These facts together imply that
\[ \Omega_A^{s_\Phi} \Omega_{A_1}^{p_1 s_\Phi} \cdots \Omega_{A_m}^{p_m s_\Phi} \Phi(\{A_j \pi_j A^*\}_{j=1}^m) = c \Phi(\{\pi_j\}_{j=1}^m) \]
for some nonzero constant $c$ depending only on the exponents $d_j$, $p_j$, and $n_j$ when $\Phi$ satisfies \eqref{algebra1}, \eqref{algebra2} and \eqref{algebra3}. They also imply that 
that for any $\Phi$ satisfying \eqref{algebra1} and \eqref{algebra3} only, the function of $\{\pi_j\}_{j=1}^m$ given by $$\Omega_A^{s_\Phi} \Omega_{A_1}^{p_1 s_\Phi} \cdots \Omega_{A_m}^{p_m s_\Phi} \Phi(\{A_j \pi_j A^*\}_{j=1}^m)$$ necessarily satisfies each of \eqref{algebra1}, \eqref{algebra2}, and \eqref{algebra3}. To understand the space of homogeneous invariant polynomials of a given degree, then, it suffices to understand the image of the map $\Phi \mapsto \Omega_A^{s_\Phi} \Omega_{A_1}^{p_1 s_\Phi} \cdots \Omega_{A_m}^{p_m s_\Phi} \Phi(\{A_j \pi_j A^*\}_{j=1}^m)$ for polynomials $\Phi$ satisfying \eqref{algebra1} and \eqref{algebra3} only.

\subsection{Polynomial invariants of Brascamp-Lieb data}

\label{blpoly}

We come now to the main result of this section, which gives a concrete characterization of the class $\mathrm{IP}$ in terms of polynomials which are expressible as determinants of block-form matrices. In light of Lemma \ref{bl2polylem}, these determinants can be reasonably regarded as quantifying various sorts of transversality of the maps $\{\pi_j\}_{j=1}^m$ which allow for finiteness of the Brascamp-Lieb constant for any desired rational exponents $\{p_j\}_{j=1}^m \in (0,1]^m$. This approach to understanding the Brascamp-Lieb constant is complementary to the work of Bennett, Carbery, and Tao \cite{bct2006} and Bennett, Carbery, Christ, and Tao \cite{bcct2008} in exactly the same way that direct computations with invariant polynomials complement characterizations of the nullcone in Geometric Invariant Theory. The strength of the finiteness criteria established in \cite{bcct2008} is that one need only show that a single (cleverly-chosen) inequality is violated to deduce that the Brascamp-Lieb constant is infinite.
Lemma \ref{bl2polylem}, in contrast, allows one to deduce the \textit{finiteness} of the constant by demonstrating the nonvanishing of a single (cleverly-chosen) invariant polynomial.

\begin{lemma}
Suppose $\{ p_j \}_{j=1}^m \in (0,1]^m$ be rational exponents satisfying the scaling condition \eqref{blscale}. Let $s$ be an integer such that $p_j s$ is an integer for all $j=1,\ldots,m$. Let $V_s$ be the vector space of all polynomials $\Phi$ satisfying \eqref{algebra1}, \eqref{algebra2}, and \eqref{algebra3} for $s_\Phi = s$. Then $V_s$ is spanned by polynomials of the form $\det M( \{\pi_j\}_{j=1}^m)$, where $M(\{ \pi_j\}_{j=1}^m)$ is an $ns \times ns$ matrix consisting of block elements of size $n_j \times n$ for $j=1,\ldots,m$ arranged in the following way: \label{itsadet}
\begin{itemize}
\item Each block entry is a constant multiple of $\pi_j$ for some $j = 1,\ldots,m$.
\item For each $j=1,\ldots,m$, there are $p_j s$ block rows of height $n_j$ (i.e., the block row is a group of $n_j$ adjacent rows of $M$). In each such block row, all block entries are multiples of $\pi_j$. At most $n_j$ of these block entries are nonzero.
\item There are $s$ block columns of width $n$. In each block column, there are at most $n$ nonzero block entries.
\end{itemize}
Figure \ref{scheme1} illustrates the structure of all such matrices $M$.
\end{lemma}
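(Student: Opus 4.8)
The plan is to establish the two inclusions separately: that every $\det M$ of the advertised block form lies in $V_s$, and conversely that such determinants span $V_s$.

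For the first inclusion I would argue directly. If $M(\{\pi_j\}_{j=1}^m)$ has the stated block structure, then replacing each $\pi_j$ by $A_j\pi_j A^*$ is the same as multiplying $M$ on the left by the block-diagonal matrix $\mathcal A_L$ whose diagonal blocks are $A_1$ repeated $p_1 s$ times, \ldots, $A_m$ repeated $p_m s$ times, and on the right by the block-diagonal matrix $\mathcal A_R$ consisting of $s$ diagonal blocks each equal to $A^*$; that is, $M(\{A_j\pi_j A^*\}_{j=1}^m) = \mathcal A_L\, M(\{\pi_j\}_{j=1}^m)\, \mathcal A_R$. Both $\mathcal A_L$ and $\mathcal A_R$ are $ns\times ns$ by the scaling condition \eqref{blscale}, so taking determinants gives $\det M(\{A_j\pi_j A^*\}_{j=1}^m) = \big[(\det A)^s \prod_{j=1}^m(\det A_j)^{p_j s}\big]\det M(\{\pi_j\}_{j=1}^m)$, which is precisely \eqref{detid} with $s_\Phi = s$. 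In particular $\det M$ satisfies \eqref{algebra2}; and since multilinearity of the determinant in its rows shows $\det M$ is homogeneous of degree $p_j n_j s$ in $\pi_j$ (the $\pi_j$-blocks occupy $p_j s$ block rows of height $n_j$), it satisfies \eqref{algebra1} and \eqref{algebra3} with $s_\Phi = s$. Hence $\det M \in V_s$.

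For the reverse inclusion I would invoke the Cayley $\Omega$ process exactly as set up above: $V_s$ coincides with the image of the linear map $\Phi_0 \mapsto \Omega_A^{s}\,\Omega_{A_1}^{p_1 s}\cdots\Omega_{A_m}^{p_m s}\,\Phi_0(\{A_j\pi_j A^*\}_{j=1}^m)$, where $\Phi_0$ ranges over polynomials obeying \eqref{algebra1} and \eqref{algebra3} with $s_\Phi = s$ only, and by linearity it suffices to evaluate this map on a monomial $\Phi_0 = \prod_{j=1}^m \prod_{\ell=1}^{d_j}(\pi_j)_{a_{j\ell}\, b_{j\ell}}$, $d_j = p_j n_j s$. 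Substituting $(A_j\pi_j A^*)_{a,b} = \sum_{r,c}(A_j)_{ar}(\pi_j)_{rc}(A)_{bc}$ and noting that each $\Omega$ operator has total order equal to the degree of $\Phi_0(\{A_j\pi_j A^*\})$ in the corresponding matrix variable ($d_j$ in $A_j$, and $\sum_j d_j = ns$ in $A$, again by \eqref{blscale}), one applies the standard action of the Cayley $\Omega$ operator on products of linear forms in a matrix — namely that it sends a transversal of such forms to the determinant of their coefficient vectors — and obtains a signed sum, the outer summation running over (i) a partition of the $ns$ ``right'' slots $(j,\ell)$ into $s$ groups of size $n$ compatible with the indices $b_{j\ell}$, and (ii) for each $j$ a partition of the $d_j$ ``left'' slots of index $j$ into $p_j s$ groups of size $n_j$ compatible with the indices $a_{j\ell}$.

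The last step, which I expect to be the crux, is to identify each term of this double sum with $\det M$ for a block matrix $M$ of the required form. The recipe is forced: the left groups become the block rows — for each $j$ there are $p_j s$ of them, of height $n_j$, carrying multiples of $\pi_j$ — the right groups become the $s$ block columns of width $n$, and the block in a given (left group, right group) cell is the scalar multiple of $\pi_j$ whose coefficient encodes, with the sign inherited from the $\epsilon$-contractions, which of the original factors of $\Phi_0$ lie in both groups. The two stated bounds are then automatic: a left group of index $j$ contains only $n_j$ factors, each in exactly one right group, so it meets at most $n_j$ block columns; symmetrically a right group contains $n$ factors, so meets at most $n$ block rows. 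What requires care is checking that the generalized Laplace expansion of $\det M$ along its block columns, together with multilinearity within each block, reproduces the matching summand of the $\Omega$-computation with exactly the right signs and multiplicities (in particular when several factors of $\Phi_0$ occupy the same cell); this is essentially the classical proof of the first fundamental theorem of invariant theory for $\SL{n_1}\times\cdots\times\SL{n_m}\times\SL{n}$ adapted to the two-sided action \eqref{blgroup}, and completing it finishes the proof.
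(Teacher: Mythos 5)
Your overall strategy coincides with the paper's: the spanning direction is run through the Cayley $\Omega$ process applied to monomials in the entries of the $\pi_j$'s (equivalently, to the tensor powers \eqref{mytensor2}), producing a signed sum indexed by a partition of the $ns$ slots into $s$ classes of size $n$ for the $\Omega_A$ contractions together with partitions into classes of size $n_j$ for the $\Omega_{A_j}$ contractions. Your verification of the easy inclusion --- that every $\det M$ of the advertised block form lies in $V_s$, via the factorization $M(\{A_j\pi_jA^*\}_{j=1}^m)=\mathcal A_L\,M(\{\pi_j\}_{j=1}^m)\,\mathcal A_R$ and row-multilinearity --- is correct, and is in fact spelled out more explicitly than in the paper.

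The gap sits exactly where you flag the crux. A fixed pair of partitions does \emph{not} yield a single determinant $\det M$ in which the scalar in the cell indexed by a left group and a right group ``encodes which of the original factors of $\Phi_0$ lie in both groups'': when $r\geq 2$ factors occupy the same cell, the alternating contraction is a mixed polarization of the determinant in those $r$ arguments, which is not the determinant evaluated with multiplicity $r$ as a coefficient. Already for $m=1$, $n=n_1=2$, $p_1=1$, $s=1$, both slots land in the one cell and the contraction equals $\partial_{t_1}\partial_{t_2}\det\bigl((t_1+t_2)\pi_1\bigr)=2\det\pi_1$, whereas the determinant of the cell taken with coefficient $2$ is $\det(2\pi_1)=4\det\pi_1$; with several such cells the polarization is genuinely a sum of products of minors and need not equal any single block determinant. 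The paper closes this by writing the contraction attached to the partition pair $(I,J)$ exactly as $\bigl(\prod_{\lambda}\partial/\partial t_\lambda\bigr)\det\sum_{\lambda}t_\lambda\pi_\lambda$, where $\pi_\lambda$ is the copy of $\pi_{j_\lambda}$ planted in block position $([\lambda]_I,[\lambda]_J)$, and then observing that derivatives of polynomials are finite differences: each summand is therefore a \emph{linear combination} of determinants $\det\sum_\lambda t_\lambda\pi_\lambda$ at finitely many numerical choices of the $t_\lambda$, and each such matrix has the required block structure with the stated bounds on nonzero blocks per block row and block column (these follow, as you note, from the sizes of the partition classes). Since the lemma only claims a spanning set, this linear-combination statement is all that is needed; replacing your one-determinant-per-term identification by this polarization step completes the argument.
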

\begin{figure}[ht]
\begin{center}
\begin{tikzpicture}[rotate=90,xscale=-0.37,yscale=-0.37]
\draw[xstep=3,ystep=7,dashed] (0,0) grid (21,21);
\draw (0,0) rectangle (21,21);
\draw (0,0) rectangle (3,7);  \node at (1.5,3.5) {$\displaystyle c_{111} \pi_1$};  %UL
\draw (0,14) rectangle (3,21); \node at (1.5,17.5) {$\displaystyle c_{11s} \pi_1$}; %UR
\node at (1.5,10.5) {$\displaystyle \cdots$}; \node at (7.5,10.5) {$\displaystyle \cdots$};
\node at (4.5,3.5) {$\displaystyle \vdots$}; \node at (4.5,17.5) {$\displaystyle \vdots$};
\node at (4.5,10.5) {$\displaystyle \ddots$};
\draw (6,0) rectangle (9,7);  \node at (7.5,3.5) {$\displaystyle c_{1 (p_1 s)1} \pi_1$};  %UL
\draw (6,14) rectangle (9,21); \node at (7.5,17.5) {$\displaystyle c_{1 (p_1 s) s} \pi_1$}; %UR

\node at (10.5,3.5) {$\displaystyle \vdots$}; \node at (10.5,17.5) {$\displaystyle \vdots$};
\node at (10.5,10.5) {$\displaystyle \ddots$};

\draw (12,0) rectangle (15,7);  \node at (13.5,3.5) {$\displaystyle c_{m11}\pi_m$};  %UL
\draw (12,14) rectangle (15,21); \node at (13.5,17.5) {$\displaystyle c_{m1 s} \pi_m$}; %UR
\node at (13.5,10.5) {$\displaystyle \cdots$}; \node at (19.5,10.5) {$\displaystyle \cdots$};
\node at (16.5,3.5) {$\displaystyle \vdots$}; \node at (16.5,17.5) {$\displaystyle \vdots$};
\node at (16.5,10.5) {$\displaystyle \ddots$};
\draw (18,0) rectangle (21,7);  \node at (19.5,3.5) {$\displaystyle c_{m (p_m s) 1} \pi_m$};  %UL
\draw (18,14) rectangle (21,21); \node at (19.5,17.5) {$\displaystyle c_{m (p_m s) s} \pi_m$}; %UR

\draw [decorate,decoration={mirror,brace,amplitude=10pt}]
(0,0) -- (9,0) node [black,midway,left,text width=2.5cm] 
{$p_1 s$ block rows of height $n_1$};

\node at (10.5,-4) {$\displaystyle \vdots$}; 

\draw [decorate,decoration={mirror,brace,amplitude=10pt}]
(12,0) -- (21,0) node [black,midway,left,text width=2.5cm] 
{$p_m s$ block rows of height $n_m$};

\draw [decorate,decoration={brace,amplitude=10pt}]
(0,0) -- (0,21) node [black,midway,above,yshift=10pt] 
{$s$ block columns of width $n$};

\end{tikzpicture}
\end{center}
\caption{Block structure of $ns \times n s$ matrices $M$ whose determinants span the space of invariant polynomials of Brascamp-Lieb data satisfying \eqref{algebra1}, \eqref{algebra2}, and \eqref{algebra3} for $s_\Phi = s$.  Here each $c_{i_1i_2i_3}$ is a scalar. \label{scheme1} }
\end{figure}
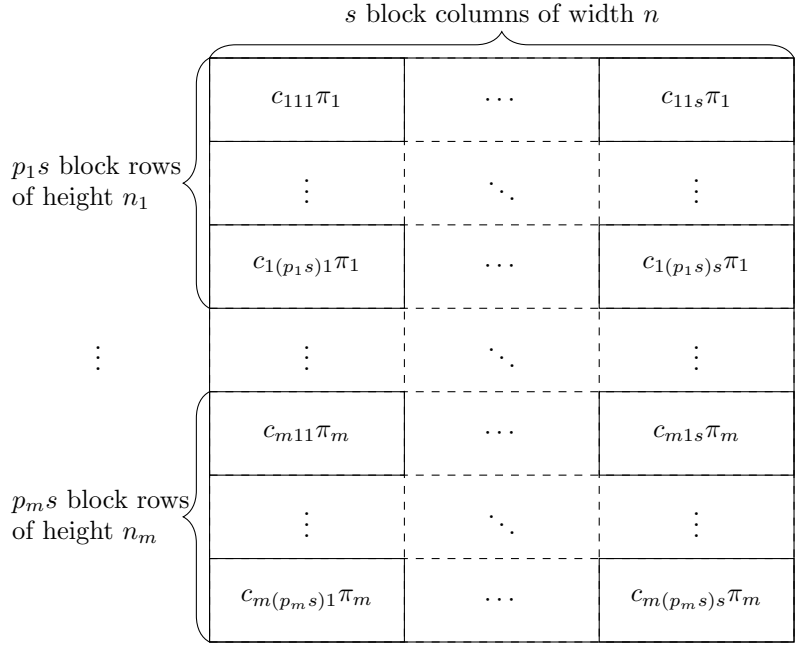

\begin{proof}
The proof proceeds by an analysis of the action of the Cayley $\Omega$ operator on general multilinear functionals.
One could instead formulate this problem as a quiver representation and appeal to a number of general results concerning the structure of semi-invariants (see, for example Domokos and Zubikov \cite{dz2001}), but for the present purposes the $\Omega$ operator will yield a more elementary and transparent proof from the standpoint of analysis. Readers should also note the similarity of the matrices $M(\{\pi_j\}_{j=1}^m)$ and the Brascamp-Lieb operator as defined in \cite{ggow2017}.

Suppose that $\Pi : (\R^n)^{n} \rightarrow \R$ is a multilinear functional on $\R^n$. This $\Pi$ is expressed in the standard basis by the formula
\[ \Pi(\{x_i\}_{i=1}^{n}) := \sum_{j_1,\ldots,j_{n}=1}^n \Pi_{j_1 \cdots j_{n}} x_{1, j_1} \cdots x_{n, j_{n}} \]
where $x_{i,j}$ is the $j$-th coordinate of $x_{i}$.
For any $n \times n$ matrix $A$, 
\[ \Pi (\{A x_i\}_{i=1}^{n}) = \mathop{\sum_{j_1,\ldots,j_{n}=1}^n}_{k_1,\ldots,k_{n}=1} \Pi_{j_1 \cdots j_{n}} A_{j_1 k_1} \cdots A_{j_{n} k_{n}} x_{1, k_1} \cdots x_{n, k_{n}}. \]
If this sum is differentiated by ${\partial^n}/{\partial A_{1\sigma_1} \cdots \partial A_{n \sigma_n}}$, the result will equal zero unless $j_1,\ldots,j_n$ are distinct and $k_i = \sigma_{j_i}$ for each $i=1,\ldots,n$. Thus
\begin{equation} \frac{\partial^n}{\partial A_{1\sigma_1} \cdots \partial A_{n \sigma_n}} \Pi (\{A x_i\}_{i=1}^{n}) = \mathop{\sum_{j_1,\ldots,j_n = 1}^n}_{\mbox{distinct}} \Pi_{j_1 \cdots j_n} x_{1, \sigma_{j_1}} \cdots x_{n, \sigma_{j_n}}. \label{almostc1} \end{equation}
%The Cayley $\Omega$ operator is exactly the differential operator
%\[ \Omega_A := \sum_{\sigma \in {\mathfrak S}_n} \partial_{A_{1 \sigma_1} \cdots A_{n \sigma_n}}; \]
Multiplying \eqref{almostc1} by $(-1)^{\sigma}$ and summing over $\sigma \in {\mathfrak S}_n$ gives that
\begin{align*}
\Omega_A \Pi (\{A x_i\}_{i=1}^{n}) & = \left( \sum_{\tau \in {\mathfrak S}_n} (-1)^{\tau} \Pi_{\tau_1 \cdots \tau_{n}} \right) \left( \sum_{\sigma  \in {\mathfrak S}_n} (-1)^{\sigma} x_{1, \sigma_1} \cdots x_{n, \sigma_n} \right) \\
& = \left( \sum_{\tau \in {\mathfrak S}_n} (-1)^{\tau} \Pi_{\tau_1 \cdots \tau_{n}} \right)  [ x_1 \cdots x_n ].
\end{align*}
The notation $[x_1 \cdots x_n]$ is simply shorthand for the determinant of the $n\times n$ matrix whose columns are given by the vectors $x_1,\ldots,x_n$.
The quantity in parentheses on the last line above will be called the alternating contraction of $\Pi$ in the indices $(1,\ldots,n)$ and will be denoted $\left. \Pi \right|_{(1,\ldots,n)}$. Suppose now that $\Pi$ has some arbitrary degree of multilinearity, i.e., $\Pi : (\R^n)^{\Lambda} \rightarrow \R$ for some ordered index set $\Lambda$. If $\#\Lambda < n$, then $\Omega_A \Pi (\{A x_i\}_{i \in \Lambda}) = 0$ trivially. If instead $k > n$, then by the product rule it must be the case that
\[ \Omega_A \Pi (\{A x_i\}_{i \in \Lambda}) = \mathop{\sum_{I \subset \Lambda}}_{\# I = n} \left. \Pi \right|_{I}  ( \{ A x_i\}_{i \in \Lambda \setminus I}) [ x ]_{I} \]
where $\left. \Pi \right|_I$ is the multilinear functional with index set $\Lambda \setminus I$ obtained by performing an alternating contraction in the indices $I$ (arranged in the usual order) and where $[x]_{I} := [x_{i_1} \cdots x_{i_n}]$ with $i_1 < \cdots < i_n$ being the elements of $I$. By induction, for any $s$ such that $\# \Lambda \geq ns$,
\begin{equation} \begin{split} \Omega_A^{s}&  \Pi (\{A x_i\}_{i \in \Lambda}) \\ & = \mathop{ \sum_{\# I_1 = n} \cdots \sum_{\# I_s = n} }_{I_1,\ldots I_s \mbox{ pairwise disjoint}} \left.   \Pi \right|_{I_1} \cdots  \left. \right|_{I_s} ( \{ A x_i\}_{i \in \Lambda \setminus \bigcup_{j=1}^s I_j )}) [x]_{I_1} \cdots [x]_{I_s}. \end{split} \label{expandit} \end{equation}
When $\# \Lambda = ns$ and $\Lambda = I_1 \cup \cdots \cup I_s$ for pairwise disjoint $I_j$'s, the quantity $\left. \Pi \right|_{I_1} \cdots  \left. \right|_{I_s}$ is simply a scalar obtained by performing an alternating contraction in each of the index subsets $I_1,\ldots,I_s$.

Now consider the multilinear functional  
\begin{equation} \Pi( \{x_i^1,y_i^1\}_{i=1}^{q_1},\ldots, \{x_i^m,y_i^m\}_{i=1}^{q_m} ) := \prod_{j=1}^m \prod_{i=1}^{q_j} \ang{ x_j^i, \pi_j y_j^i} \label{mytensor1} \end{equation}
where $p_j n_j = q_j/q$ and where the $\pi_j$ are as in the previous section; this is exactly the same construction as \eqref{restrpr}. If $A \in \SL{n}$ and $A_j \in \SL{n_j}$ for each $j=1,\ldots,m$, then we seek homogeneous polynomials of degree $d$ in the entries of $\Pi$ which are invariant under the action of these matrices given by
\[\Pi( \{A_1 x_i^1,A y_i^1\}_{i=1}^{q_1},\ldots, \{A_m x_i^m,A y_i^m\}_{i=1}^{q_m} ). \]
(Note that this action differs from $\rho$ by replacing $A_j^*$ and $A^*$ by $A_j$ and $A$; since the special linear group is closed under adjoints, this change is inconsequential and simplifies notation.)
Any polynomial function of $\Pi$ must belong to the span of $d$-fold products of the expressions \eqref{mytensor1}, where in each term of the product, the $x_i^j$'s and $y_i^j$'s are regarded as fixed but may change from factor to factor (which is to say that evaluating $\Pi$ on specific tuples of $x_i^j$'s and $y_i^j$'s gives a basis of functions from which the algebra of polynomial functions of $\Pi$ can be generated). If this polynomial happens to be invariant under the action of the matrices $(A_1,\ldots,A_m,A)$, that polynomial must be preserved (up to multiplication by a nonzero constant) by the operator $\Omega_A^{s_\Phi} \Omega_{A_1}^{p_1 s_\Phi} \cdots \Omega_{A_m}^{p_m s_\Phi}$ when $s_\Phi := d q$. Moreover this compound Cayley operator maps all homogeneous polynomials of $\Pi$ satisfying \eqref{algebra1} and \eqref{algebra3} into the space of invariant polynomials satisfying \eqref{algebra1}, \eqref{algebra2}, and \eqref{algebra3}. By virtue of the calculations above, the space of all such invariant homogeneous polynomials of a fixed degree is spanned by repeated alternating contractions of tensor powers of $\Pi$, where the contractions take place with respect to compatible entries. Specifically this means forming alternating contractions of the multilinear functional
\begin{equation} \Pi^d( \{x_i^1,y_i^1\}_{i=1}^{dq_1},\ldots, \{x_i^m,y_i^m\}_{i=1}^{dq_m} ) := \prod_{j=1}^m \prod_{i=1}^{dq_j} \ang{ x_j^i, \pi_j y_j^i} \label{mytensor2} \end{equation}
in such a way that contractions are in $n$-tuples of indices corresponding to the variables $y_j^i$ for any values of $i$ and $j$ and in $n_j$-tuples of indices corresponding to the variables $x^i_j$ for each $j=1,\ldots,m$. After performing such an operation, the object that remains is a scalar quantity because $d q_j = n_j p_j s_\Phi$ is an integer multiple of $n_j$ and $d (q_1 + \cdots + q_m) = d q (p_1 n_1 + \cdots + p_m n_m) = s_\Phi n$ is an integer multiple of $n$.

Let \[\Lambda := \set{(i,j) \in \Z^2}{ i \in \{1,\ldots,d q_j\}, \ j \in \{1,\ldots,m\}} \]
and suppose $\Lambda$ is given the lexicographic ordering. This is the index set associated to the product \eqref{mytensor2}.
For any $\lambda \in \Lambda$, let its coordinates be denoted $i_\lambda$ and $j_\lambda$, i.e., $\lambda := (i_\lambda,j_\lambda)$. The structure of the expansion of
\begin{equation} \Omega_A^{s_\Phi} \Omega_{A_1}^{p_1 s_\Phi} \cdots \Omega_{A_m}^{p_m s_\Phi} \Pi^d(\{A_1 x_i^1,A y_i^1\}_{i=1}^{dq_1},\ldots,\{A_m x_i^m, A y_i^m \}_{i=1}^{dq_m}) \label{toexpand} \end{equation}
will include a sum over all partitions $J := \{J_1,\ldots,J_{s_\Phi}\}$ of $\Lambda$ into pairwise disjoint sets of cardinality $n$ where alternating contractions of length $n$ are performed over the groups of variables $y_j^i$ indexed by each of the subsets $J_1, \ldots,J_{s_\Phi}$. Summing over all such partitions will yield the expansion of the $\Omega_A^{s_\Phi}$ factor. The expansions of all the remaining factors of $\Omega$ can be expressed as a sum over a different type of partition
$I :=  \{I_1,\ldots,I_{s_\Phi(p_1+\cdots+p_m)}\}$ of $\Lambda$.
In this case, the alternating contractions will involve $n_j$ indices and variables $x_j^{i_1},\ldots,x_j^{i_{n_j}}$ for values of $j$ between $1$ and $m$. In other words, each $I_1,\ldots,I_{s_\Phi(p_1+\cdots + p_m)}$ must consist of indices of the form $\{ (i_1,j), \ldots, (i_{n_j},j) \}$ for some $j$.
While it is perhaps clear what one means by applying the formula \eqref{expandit} to compute the alternating contraction of \eqref{mytensor2} with respect to these partitions $I$ and $J$, carefully carrying out this computation explicitly and compactly requires some additional notation. First, for any $\lambda \in \Lambda$, let $[\lambda]_{I}$ denote the unique subset $I_\ell \in I$ such that $\lambda \in I_\ell$. Likewise let $[\lambda]_{J}$ be the unique element of the partition $J$ containing $\lambda$. Let $\mathfrak S_{I}$ be all permutations of $\Lambda$ such that $[ \sigma_\lambda]_{I} = [\lambda]_{I}$ for all $I$ (i.e., $\mathfrak S_I$ is restricted to permutations of $\Lambda$ which preserve the partition $I$) and analogously for $\mathfrak S_{J}$. 
Lastly, let $r^{I}(\lambda)$ be the total number of indices $\lambda' \in [\lambda]_{I}$ such that $\lambda' \leq \lambda$ and similarly let $c^J(\ell)$ be the total number of indices $\lambda' \in [\lambda]_J$ such that $\lambda' \leq \lambda$. It follows that the repeated alternating contraction of $\Pi^d$ associated to the partitions $I$ and $J$ is given exactly by
\begin{equation} \sum_{\sigma \in {\mathfrak S}_{I}, \tau \in {\mathfrak S}_{J}} (-1)^{\sigma + \tau} \prod_{\lambda \in \Lambda} (\pi_{j_\lambda})_{r^I(\sigma_\lambda) c^J(\tau_\lambda)} \label{allcont1} \end{equation}
where $(\pi_j)_{\ell \ell'}$ is the $\ell \ell'$-entry of the matrix of $\pi_j$ in the standard basis. The formula \eqref{allcont1} can be seen to be an alternating contraction precisely because inside each $I_{\ell} \in I$, $\sigma$ merely permutes elements of $I_{\ell}$, which means that the values of $r^I (\sigma_\lambda)$ for $\lambda \in I_\ell$ are merely permutations of $\{1,\ldots,n_{j_\lambda}\}$ and similarly for the partition $J$. The identity \eqref{expandit} guarantees that \eqref{toexpand} is expressible of a linear combination of terms of the form \eqref{allcont1} with coefficients which depend on the $x_j^i$ and the $y_j^i$; moreover, it can be somewhat easily checked that each term of the form \eqref{allcont1} is invariant under the action of $(A_1,\ldots,A_m,A)$ precisely because \eqref{allcont1} is expressible in terms of alternating contractions and such contractions themselves have the desired invariance properties.

Now for each $\lambda \in \Lambda$, let $\pi_\lambda$ be a $\# \Lambda \times \# \Lambda$ matrix with rows and columns indexed by $\Lambda$ whose entries are
\[ (\pi_\lambda)_{\lambda'\lambda''} := \begin{cases} (\pi_{j_\lambda})_{r^I(\lambda') c^J(\lambda'')} & \mbox{ if } [\lambda]_{I} = [\lambda']_{I} \mbox{ and } [\lambda]_{J} = [\lambda'']_{J} \\
0 & \mbox{ otherwise} \end{cases}.
\]
With this definition, it must be the case that \eqref{allcont1} is equal to
\begin{equation} \sum_{\sigma,\tau \in {\mathfrak S}_{\Lambda}} (-1)^{\sigma + \tau} \prod_{\lambda \in \Lambda} (\pi_\lambda)_{\sigma_\lambda \tau_\lambda} \label{predet1} \end{equation}
where the sums are now over all permutations $\sigma$ and $\tau$ of $\Lambda$ because the terms of the sum  \eqref{predet1} simply vanish for all permutations $\sigma \in {\mathfrak S}_{\Lambda} \setminus {\mathfrak S}_{I}$ and $\tau \in {\mathfrak S}_{\Lambda} \setminus {\mathfrak S}_J$ (simply because there will necessarily be some $\lambda$ such that $[\lambda]_I \neq [\sigma_\lambda]_I$ or $[\lambda]_J \neq [\tau_\lambda]_J$, which means that one of the entries of $\pi_\lambda$ in the product $(\pi_\lambda)_{\sigma_\lambda \tau_\lambda}$ will necessarily be zero by definition of $(\pi_{\lambda})_{\lambda' \lambda''}$
).
The expression \eqref{predet1} is itself exactly equal to the expression
\[ \left( \prod_{\lambda \in \Lambda} \frac{\partial}{\partial t_\lambda} \right) \det \sum_{\lambda \in \Lambda} t_\lambda \pi_\lambda \]
for real parameters $t_\lambda$, since by the product rule
\begin{align*}
 \left( \prod_{\lambda \in \Lambda} \frac{\partial}{\partial t_\lambda} \right) \det \sum_{\lambda \in \Lambda} t_\lambda \pi_\lambda  & =  \left( \prod_{\lambda \in \Lambda} \frac{\partial}{\partial t_\lambda} \right) \sum_{\tau \in {\mathfrak S}_\Lambda} (-1)^{\tau} \prod_{\lambda' \in \Lambda} \left( \sum_{\lambda \in \Lambda} t_\lambda \pi_\lambda \right)_{\lambda' \tau_{\lambda'} } \\
 & = \sum_{\sigma,\tau \in {\mathfrak S}_\Lambda} (-1)^{\tau} \prod_{\lambda' \in \Lambda} (\pi_{\sigma_{\lambda'}})_{\lambda' \tau_{\lambda'}}
 %= \sum_{\sigma, \tau \in {\mathfrak S}_{dm}} (-1)^{\tau} \prod_{\lambda \in \Lambda} (\pi_{\sigma_\lambda})_{\lambda \tau_\lambda} (\tilde \pi_{\sigma_1})_{1 \tau_1} \cdots (\tilde \pi_{\sigma_{dm}})_{dm \tau_{dm}},
\end{align*}
(where the permutation $\sigma$ comes from all orderings of the partial derivatives)
which can be seen to equal \eqref{predet1} by replacing $\tau$ by $\tau \circ \sigma$, reordering the terms in the product, and then replacing $\sigma$ by $\sigma^{-1}$. Derivatives of polynomials can always be evaluated exactly as finite differences, which means that \eqref{allcont1} itself be realized as a linear combination of determinants $\det \sum_{\lambda \in \Lambda} t_\lambda \pi_\lambda$ for various values of the parameters $t_{\lambda}$.

To finish, observe that the matrices $\pi_\lambda$ have common block structure. To be precise, each row $\lambda'$ of the full matrix is uniquely associated with a unique element of $I$, namely, $[ \lambda' ]_{I} \in I$, in the sense that $\pi_{\lambda}$ will be identically zero in row $\lambda'$ unless $[\lambda]_{I} = [\lambda']_I$. The same goes for columns: $\pi_\lambda$ is zero in column $\lambda'$ unless $[\lambda']_{J} = [\lambda]_{J}$. By reordering rows so that rows associated to the same set in $I$ are adjacent and likewise bringing columns associated to the same set in $J$ together to be adjacent, it follows that the alternating contraction \eqref{allcont1} is expressible as a linear combination of determinants of $\#\Lambda \times \#\Lambda = n s_\Phi \times n s_\Phi$ matrices of the exact form described in the statement of the lemma.
To see that every block row associated to $\pi_j$ for fixed $j$ contains no more than $n_j$ nonzero copies of $\pi_j$, simply note that this block row is associated to exactly $n_j$ literal rows $\lambda'$ of the large matrix, and there are exactly $n_j$ values of $\lambda$ such that $\pi_\lambda$ is not automatically zero in this row (namely, the values of $\lambda$ such that $[\lambda]_I = [\lambda']_I$). If each such $\lambda$ belongs to a different element of the column partition $J$, then there can be at most $n_j$ nonzero block entries in this block row. The argument for block entries in block columns is similar.
\end{proof}

 \section{Radon-like operators: Proof of Theorem \ref{genradonthm}}
 \label{proofsec}
 This section contains the proof of Theorem \ref{genradonthm}. The general structure is to combine three elements: the characterization of the Brascamp-Lieb constant given by Lemma \ref{bl2polylem}, the continuous Kakeya-Brascamp-Lieb inequality as it is formulated in Theorem \ref{kakeyathm}, and key ideas from \cite{gressman2018} formulated for the study of nonconcentration inequalities.
The initial step is to observe that the quantity in the integrand on the left-hand side of \eqref{kakeya} is an integral nonconcentration quantity and so may be directly estimated from below via a supremum:
 \begin{lemma}
 Suppose $\pi$ is a continuous map from some $\ell$-dimensional manifold $M$ into $\R^{(n-k) \times n}$. For any Borel set $F \subset M$ and any finite nonnegative Borel measure $\sigma$ on $M$, there is a Borel subset $F' \subset F$ with $\sigma(F') \geq \sigma(F)/2$ such that
 \[ \int_{F^m}  \left[ \blw( \{\pi(t_j)\}_{j=1}^m) \right]^{\frac{1}{p}} d \sigma(t_1) \cdots d \sigma(t_m) \gtrsim (\sigma(F))^m \! \mathop{\sup_{t_1 \in F', \ldots,}}_{t_m \in F'} \!  \left[ \blw( \{\pi(t_j)\}_{j=1}^m) \right]^{\frac{1}{p}} \]
 with an implicit constant which depends only on $n,k$, and $m$. \label{suplemma}
 \end{lemma}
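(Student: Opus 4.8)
The plan is to reduce the statement to a multilinear nonconcentration estimate of the type established in \cite{gressman2018}, using the identification of the Brascamp--Lieb weight with a maximum of determinantal invariants furnished by Lemmas \ref{bl2polylem} and \ref{itsadet}. Write $g(t_1,\ldots,t_m):=[\blw(\{\pi(t_j)\}_{j=1}^m)]^{1/p}$. By Lemma \ref{bl2polylem} together with \eqref{blblock2},
\[ g(t_1,\ldots,t_m) \approx \max_{\Phi\in\mathrm{IP}_0}\ |||\Phi|||^{-\frac{n-k}{d_\Phi}}\,\bigl|\Phi(\pi(t_1),\ldots,\pi(t_m))\bigr|^{\frac{n-k}{d_\Phi}}, \]
where $\mathrm{IP}_0$ is finite and $\mathrm{IP}_0$, the degrees $d_\Phi$, and the implicit constants all depend only on $n,k,m$. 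Since the integral of a maximum dominates the maximum of the integrals and $\mathrm{IP}_0$ is finite, it suffices to produce, for each $\Phi\in\mathrm{IP}_0$, a Borel set $F'_\Phi\subset F$ with $\sigma(F'_\Phi)\geq(1-\tfrac{1}{2\#\mathrm{IP}_0})\sigma(F)$ such that $\int_{F^m}|\Phi(\pi(t_1),\ldots,\pi(t_m))|^{\frac{n-k}{d_\Phi}}\,d\sigma(t_1)\cdots d\sigma(t_m)\gtrsim\sigma(F)^m\sup_{t_1,\ldots,t_m\in F'_\Phi}|\Phi(\pi(t_1),\ldots,\pi(t_m))|^{\frac{n-k}{d_\Phi}}$: taking $F':=\bigcap_\Phi F'_\Phi$ then gives $\sigma(F')\geq\sigma(F)/2$, and reassembling the maxima (using $F'\subset F'_\Phi$ for every $\Phi$, together with the two-sided comparison above) yields the lemma. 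Continuity of $\pi$ is used here only to ensure Borel measurability of the integrands and good behaviour of the suprema.

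For a fixed $\Phi\in\mathrm{IP}_0$, Lemma \ref{itsadet} presents $\Phi$ as $\det M$, where the $n s_\Phi$ rows of $M$ split into $m$ blocks of $d_\Phi$ rows each, the $j$-th block being a fixed linear image of $\pi(t_j)$ alone. Expanding $\det M$ by the generalized Laplace formula successively along the first block, then the second, and so on, produces a fixed $m$-linear form $\mathcal T_\Phi$ (its entries being signs times indicators that the column subsets chosen at successive stages are disjoint) together with fixed vector-valued polynomials $A^j_\Phi$, whose components are the $d_\Phi\times d_\Phi$ minors of the $j$-th block, such that $\Phi(\pi(t_1),\ldots,\pi(t_m))=\mathcal T_\Phi\bigl(A^1_\Phi(\pi(t_1)),\ldots,A^m_\Phi(\pi(t_m))\bigr)$. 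Thus the integrand in the reduced statement is the absolute value of a fixed multilinear form, evaluated at continuous vector-valued functions $t_j\mapsto A^j_\Phi(\pi(t_j))$ of the individual variables, raised to a fixed positive power --- exactly the structure of the ``inflation'' quantities for which nonconcentration estimates are known.

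With this reduction in hand, the bound for a single $\Phi$ follows from the nonconcentration machinery of \cite{gressman2018} (for which the later Proposition \ref{convprop} is a convenient packaging): for any $\delta\in(0,1)$ one may discard from $F$ a set of $\sigma$-measure at most $\delta\,\sigma(F)$ so that over the remainder the supremum of $|\mathcal T_\Phi(A^1_\Phi(\pi(t_1)),\ldots,A^m_\Phi(\pi(t_m)))|^{(n-k)/d_\Phi}$ is dominated by a constant times its average over $F^m$; the proof is an iterated Gram--Schmidt/quantile selection in which, at each of the $m$ stages, one removes the $\tfrac{\delta}{m}$-fraction of $F$ where the corresponding block $A^j_\Phi(\pi(\cdot))$ protrudes past a greedily chosen threshold. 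I expect this last step to be the principal obstacle, and the reason the full strength of \cite{gressman2018} is needed rather than a soft argument: the selection must yield a set $F'_\Phi$ that is independent of the order in which the blocks are processed (so that it controls the symmetric supremum over $(F'_\Phi)^m$), it must be carried out for vector-valued rather than scalar data, and it must absorb the fractional exponent $(n-k)/d_\Phi$. Granting this, the first paragraph above assembles everything into the lemma, with a constant depending only on $n$, $k$, and $m$.
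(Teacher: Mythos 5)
Your proposal is correct and follows essentially the paper's own route: reduce via Lemma \ref{bl2polylem} to a finite family of invariant polynomials, then apply the nonconcentration selection of Proposition \ref{convprop} one variable at a time (with the selected set independent of the frozen variables) and iterate the integration over $t_1,\ldots,t_m$. The only deviations are cosmetic: the paper applies Proposition \ref{convprop} once with $\delta=1/2$ to the full space of polynomials of $\pi$ of bounded degree rather than intersecting per-$\Phi$ sets, and it never needs the Laplace-expansion/multilinear-form structure you extract from Lemma \ref{itsadet} --- homogeneity \eqref{algebra1} already places each map $t_1\mapsto\Phi(\pi(t_1),\ldots,\pi(t_m))$ (with the other variables frozen) in a fixed finite-dimensional polynomial space, which is all Proposition \ref{convprop} requires.
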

 
 The proof of Lemma \ref{suplemma} is based on the following proposition, which is a mild extension of Lemma 1 from \cite{gressman2018}:
 \begin{proposition}
Let $V$ be a normed vector space. For any positive integer $d$, 
 any topological space $X$, any nonnegative finite Borel measure $\mu$ on $X$, any  $d$-dimensional vector space $\mathcal F$ of continuous functions $f : X \rightarrow V$, and any $\delta \in (0,1)$, there is a closed subset $X_\delta \subset X$ with $\mu( X_\delta) \geq (1 - \delta) \mu(X)$ such that \label{convprop}
\begin{equation}
\mu \left( \set{ x \in X}{ | f(x)| \geq d^{-1} \sup_{y \in X_\delta} |f(y)| } \right) \geq \delta d^{-1} \mu(X) \label{mustbebig}
\end{equation}
for all $f \in {\mathcal F}$. The set $X_\delta$ has the form
\begin{equation} X_{\delta} := \set{x \in X}{ f_j(x) = 0 \ \forall j < j_0 \mbox{ and }  | f_j(x) | \leq 1, \forall j \geq j_0} \label{baseform} \end{equation}
for some functions $f_1,\ldots,f_d \in \mathcal F$ and some $j_0 \in \{0,\ldots,d+1\}$.
\end{proposition}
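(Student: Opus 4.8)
The plan is to prove Proposition~\ref{convprop} by induction on $d$, following the scheme of Lemma~1 of \cite{gressman2018}; the passage from scalar-valued to $V$-valued functions is cosmetic, the only properties of $|\cdot|$ used being continuity of $x\mapsto|f(x)|$ and the triangle inequality. First I would restate the target in a more convenient form: for $f\in\mathcal F$ put $q(f):=\inf\{t\ge0:\mu(\{x\in X:|f(x)|>t\})<\delta d^{-1}\mu(X)\}$, which is finite (since $\mu$ is finite and $f$ is finite everywhere), positively homogeneous, and satisfies $\mu(\{x:|f(x)|\ge q(f)\})\ge\delta d^{-1}\mu(X)$ by continuity from above of $\mu$. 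Hence if a closed set $X_\delta$ has the property that $\sup_{X_\delta}|f|\le d\,q(f)$ for every $f\in\mathcal F$, then $\{x:|f(x)|\ge d^{-1}\sup_{X_\delta}|f|\}\supseteq\{x:|f(x)|\ge q(f)\}$ and \eqref{mustbebig} follows; so it suffices to build a closed $X_\delta$ of the form \eqref{baseform} with $\mu(X_\delta)\ge(1-\delta)\mu(X)$ and $\sup_{X_\delta}|f|\le d\,q(f)$ for all $f$. The base case $d=1$, with $\mathcal F=\R f_1$, is immediate: with $\lambda^\ast:=\inf\{\lambda\ge0:\mu(\{|f_1|>\lambda\})<\delta\mu(X)\}$ one has $\mu(\{|f_1|>\lambda^\ast\})\le\delta\mu(X)\le\mu(\{|f_1|\ge\lambda^\ast\})$; if $\lambda^\ast=0$ take $X_\delta:=\{f_1=0\}$ with $j_0=2$, otherwise rescale $f_1$ so $\lambda^\ast=1$ and take $X_\delta:=\{|f_1|\le1\}$ with $j_0=1$, and in both cases $\mu(X\setminus X_\delta)\le\delta\mu(X)$ while $\sup_{X_\delta}|cf_1|\le|c|\,q(f_1)=q(cf_1)$.

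For the inductive step, assuming the statement for all dimensions $<d$, one peels off a single function. If some nonzero $f_1\in\mathcal F$ has $\mu(\{f_1\ne0\})<\delta d^{-1}\mu(X)$ (equivalently $q(f_1)=0$), impose the vanishing constraint: set $Y:=\{f_1=0\}$, which is closed with $\mu(Y)>(1-\delta d^{-1})\mu(X)$, choose a linear complement $\mathcal G$ of $\R f_1$, apply the inductive hypothesis to $\mathcal G$, intersect its output with $Y$, take the parameter small enough that the total measure loss stays below $\delta\mu(X)$, and note that $j_0$ increases by one. Otherwise $q$ is positive and homogeneous on $\mathcal F\setminus\{0\}$; then I would choose $f_1$ \emph{extremally}, as a (near-)maximizer over $f\ne0$ of the ratio $q_2(f)/q(f)$, where $q_2$ is a suitable higher quantile (e.g.\ at level $\delta(2d)^{-1}\mu(X)$) — informally, the function whose large values are least concentrated relative to its own $(1-\delta d^{-1})$-quantile — normalize so $q(f_1)=1$, and impose the truncation constraint $|f_1|\le1$ on $Y:=\{|f_1|\le1\}$, which is closed with $\mu(Y)\ge(1-\delta d^{-1})\mu(X)$; then recurse on a complement $\mathcal G$ as before. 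In either case $X_\delta\subseteq Y$ inherits the nested form \eqref{baseform}, with $j_0-1$ equal to the number of steps producing a vanishing constraint (and the extreme values $j_0=0$ and $j_0=d+1$ accounting, respectively, for no constraints and all-vanishing constraints), and $\mu(X_\delta)\ge(1-\delta)\mu(X)$.

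It then remains to verify $\sup_{X_\delta}|f|\le d\,q(f)$ for a general $f\in\mathcal F$. When $f_1$ was killed, $f=cf_1+g$ agrees with $g\in\mathcal G$ on $\{f_1=0\}\supseteq X_\delta$, so the claim reduces to the inductive estimate for $g$. When $f_1$ was truncated, write $f=cf_1+g$ with $g\in\mathcal G$; on $X_\delta\subseteq Y$ one has $|f_1|\le1$, hence $\sup_{X_\delta}|f|\le|c|+\sup_{X_\delta}|g|$, where $\sup_{X_\delta}|g|$ is controlled by the inductive estimate and $|c|$ is controlled by a multiple of $q(f)$ using sub-additivity of quantiles together with the extremal property of $f_1$, splitting into the regimes in which $cf_1$ or $g$ carries the bulk of $f$. \emph{The main obstacle is precisely this bookkeeping:} each naive use of the triangle inequality or of quantile sub-additivity costs a dimensional constant, and such losses would compound geometrically through the $d$ levels of the recursion, destroying the sharp constants $d^{-1}$ and $\delta d^{-1}$; the content of the proof is to arrange the extremal choice of $f_1$, its truncation level, and the recursion parameters so that the per-step losses telescope exactly. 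Secondary technical issues — that $q$ and the auxiliary quantile $q_2$ are only semicontinuous as functionals on $\mathcal F$ (handled by passing to near-maximizers and a limiting argument) and the borderline between the two cases — are routine.
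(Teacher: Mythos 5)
Your overall architecture (reduce to a sup bound over a sub-level set, handle the degenerate ``small support'' case by imposing a vanishing constraint and recursing on a complement) matches the paper's up to the point where the real work begins, but the core of the argument is missing, and you have in effect said so yourself: the claim that $|c|$ is ``controlled by a multiple of $q(f)$ using sub-additivity of quantiles together with the extremal property of $f_1$,'' with the losses arranged to ``telescope exactly,'' is precisely the step that does not obviously close. Quantile sub-additivity degrades the level: one only has $\mu(\{|f+g|>s+t\})\le\mu(\{|f|>s\})+\mu(\{|g|>t\})$, so each time you split $f=cf_1+g$ you must divide the mass threshold between the two pieces, and after $d$ levels of recursion the guaranteed measure of $\{|f|\ge d^{-1}\sup_{X_\delta}|f|\}$ is of order $\delta d^{-1}2^{-d}$ rather than $\delta d^{-1}$, while the sup bound picks up a multiplicative constant per level rather than the additive loss needed for the factor $d^{-1}$. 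You identify this as ``the content of the proof'' but do not supply the mechanism that makes it telescope, and it is not clear that any choice of extremal quantile ratio $q_2(f)/q(f)$ does so. As written, the proposal proves the statement only with constants depending exponentially on $d$, which is not what is claimed (and the sharp $d^{-1}$, $\delta d^{-1}$ are what get used downstream).

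The paper avoids the compounding entirely by not recursing in the main case. With $\epsilon:=\delta/d$ it forms the star-shaped closed body $\mathcal F_\epsilon:=\set{f\in\mathcal F}{\mu(\{|f|>1\})\le\epsilon}$; in the nondegenerate case this body is compact, and one chooses $f_1,\dots,f_d\in\mathcal F_\epsilon$ maximizing $|\det(f_1,\dots,f_d)|$. Cramer's rule then expresses every $f^*\in\mathcal F_\epsilon$ in this basis with all coefficients of magnitude at most $1$, so on $X_\delta:=\set{x}{|f_j(x)|\le1\ \forall j}$ one gets $\sup_{X_\delta}|f^*|\le d$ from a single triangle inequality, and $\mu(X_\delta^c)\le d\epsilon=\delta$ from a single union bound; the conclusion for general $f$ follows by scaling ($\alpha^{-1}f\notin\mathcal F_\epsilon$ whenever $\alpha<d^{-1}\sup_{X_\delta}|f|$). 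Induction on $d$ is used only to dispose of the degenerate case where some nonzero $f_1$ has $\mu(\{f_1\ne0\})\le\epsilon$, which is what produces the vanishing constraints $f_j=0$ for $j<j_0$ in \eqref{baseform}. If you want to complete your write-up, the determinant-maximizing (Auerbach-type) selection of all $d$ functions simultaneously is the idea to import; the one-at-a-time extremal quantile selection is the wrong tool here.
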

\begin{proof}
Informally, the content of \eqref{mustbebig} is that there must always be a relatively large subset $X_\delta \subset X$ (large as a fraction of $X$ with respect to the measure $\mu$) such that each $f \in \mathcal F$ exceeds $d^{-1} \sup_{y \in X_\delta} |f(y)|$ on some nontrivial fraction of $X$. In essence, it allows one to approximately reverse the usual inequalities of $L^p$-norms on $X$ if one is allowed to compute the $L^\infty$ norm over a slightly smaller set than all of $X$. The main challenge is to show that the set $X_\delta$ can be defined independently of the particular choice of $f \in \mathcal F$.

By homogeneity of \eqref{mustbebig} and homogeneity of the inequality $\mu(X_\delta) \geq (1-\delta) \mu(X)$ with respect to the measure $\mu$, it may be assumed that $\mu$ is a probability measure since \eqref{mustbebig} is clearly true for the zero measure. For any positive $\epsilon$, let
\[ {\mathcal F}_\epsilon := \set{f \in \mathcal F}{ \mu ( \set{x \in X}{ |f(x)| > 1} ) \leq \epsilon}. \]
The first task is to  establish a number of elementary facts about the sets ${\mathcal F}_{\epsilon}$. The most basic of such facts are that $0 \in {\mathcal F}_\epsilon$ and that $\mathcal F_\epsilon$ is star-shaped at the origin, i.e., $f \in \mathcal F_{\epsilon}$ implies $t f \in \mathcal F_{\epsilon}$ for all $t \in [0,1]$. This follows directly from the inequality $| t f(x)| \leq |f(x)|$ when $t \in (0,1)$. Moreover, for any $f \in {\mathcal F}$, $t f \in {\mathcal F}_\epsilon$ for all sufficiently small $t > 0$, since
\begin{align*}
\lim_{ t \rightarrow 0^+} \mu ( \{|t f| > 1 \}) = \int_X \lim_{t \rightarrow 0^+} \chi_{|t f| > 1} d \mu = 0
\end{align*}
by virtue of Dominated Convergence and the fact that $t f(x) \rightarrow 0$ for all $x$. 
 A fourth important simple fact is that $\mathcal F_\epsilon$ is closed in the vector space topology on $\mathcal F$. To see this, observe that for any sequence of functions $f_n \rightarrow f$ as $n \rightarrow \infty$, at every point $x \in X$ where $|f(x)| > 1$, it will always be the case that $|f_n(x)| > 1$ for all $n$ sufficiently large, simply by continuity of $| \cdot |$. Thus by Dominated Convergence,
\[ \mu ( \{ |f_n| > 1 \} \cap \{ |f| > 1 \} ) \rightarrow \mu ( \{ |f| > 1 \}) \mbox{ as } n \rightarrow \infty. \]
In particular, if $\mu (\{ |f_n| > 1 \}) \leq \epsilon$ for all $n$, then necessarily $\mu( \{|f| > 1\}) \leq \epsilon$.

 Fix a norm $|| \cdot ||_{\mathcal F}$ on $\mathcal F$, and for all $f$ on the unit sphere $\{ ||f||_{\mathcal F} = 1\}$, let
\[ L_{\epsilon} (f) := \sup \set{ t > 0}{ t f \in \mathcal F_{\epsilon}}. \]
This function $L_\epsilon(f)$ is necessarily upper semicontinuous on the unit sphere because $\mathcal F_\epsilon$ is closed: if $L_\epsilon(f) < a$ for some $a > 0$ and some $f$ with $||f||_{\mathcal F} = 1$, then $(a-\eta) f \in \mathcal F_{\epsilon}^c$ for all sufficiently small $\eta > 0$. Because $\mathcal F_\epsilon$ is closed, $(a - \eta) g \in \mathcal F_{\epsilon}^c$ for all $g$ sufficiently close to $f$, yielding $L_\epsilon(g) < a$.  Because the unit sphere is compact, there is a dichotomy: either $L_\epsilon$ is bounded on the unit sphere and $\mathcal F_\epsilon$ is a compact set (since in this case $|| \cdot ||_{\mathcal F}$ is necessarily a bounded function on $\mathcal F_{\epsilon}$), or $L_\epsilon$ is unbounded and there exists a nonzero $f \in \mathcal F$ such that $t f \in \mathcal F_\epsilon$ for all $t > 0$. By Dominated Convergence, any such $f$ must satisfy
\begin{equation} \mu( \{ f \neq 0 \} ) \leq \epsilon \label{itissmall} \end{equation}
because $\lim_{t \rightarrow \infty} \chi_{ |t f(x)| > 1 } = 1$ at every point $x$ where $f(x) \neq 0$.

Now fix any $\delta \in (0,1)$. From here forward, fix $\epsilon := d^{-1} \delta$.
Suppose there exists a nonzero $f_1 \in \mathcal F_\epsilon$ satisfying \eqref{itissmall} when $d = 1$. In this case, setting $X_{\delta} := \set{x \in X}{ f_1(x) = 0 }$ will satisfy the hypotheses of the lemma because all functions $f \in \mathcal F$ will be identically zero on $X_{\delta}$. This forces \eqref{mustbebig} to be vacuously true because the supremum over $X_\delta$ will always be zero. If $d = 1$ and \eqref{itissmall} does not hold for any nonzero $f_1 \in \mathcal F_\epsilon$, one can instead let $f_1 := L_\epsilon(f) f$ for some nonzero $f \in {\mathcal F}_\epsilon$ and define $X_{\delta} := \set{x \in X}{|f_1(x)| \leq 1}$.  Since $f_1 \in \mathcal F_\epsilon$, it must be that $\mu(X_\delta) \geq 1 - \epsilon = (1 - \delta) \mu(X)$. Now 
\[ \mu( \set{x \in X}{ |f_1(x)| \geq 1}) = \lim_{s \rightarrow 1^{-}} \mu(\set{x \in X}{ |f_1(x)| > s}) \]
by Dominated Convergence. If the value of the limit on the right-hand side were strictly less than $\epsilon$, $s^{-1} f_1$ would belong to $\mathcal F_\epsilon$ for some $s < 1$, which would mean that $s^{-1} L_\epsilon f \in {\mathcal F}_\epsilon$, contradicting the maximality of the supremum $L_\epsilon(f)$. Thus
\[ \mu(\set{x \in X}{ |f_1(x)| \geq 1})  \geq \epsilon = d^{-1} \delta \mu(X), \]
which implies \eqref{mustbebig} because $1 \geq \sup_{y \in X_\delta} |f_1(y)|$. By homogeneity of \eqref{mustbebig} in $f$ (and triviality of \eqref{mustbebig} when applied to the zero function), the lemma must hold when $d=1$.

Thus it suffices to assume that  $d > 1$. If $\mathcal F_\epsilon$ is not compact, let $f_1$ be taken to equal any nonzero $f$ satisfying \eqref{itissmall}, let $\tilde X := \set{ x \in X}{ f_1(x) = 0}$, and let $\tilde{\mathcal F}$ be any maximal subspace of $\mathcal F$ which is linearly independent when restricted to $\tilde X$. Because $f_1 = 0$ on $\tilde X$, the dimension $\tilde d$ of $\tilde{\mathcal F}$ is at most $d-1$; if $\tilde{\mathcal F}$ is trivial, then the lemma follows by fixing $X_\delta := \tilde X$. Thus it may be assumed that $1 \leq \tilde d \leq d-1$. By induction on dimension, setting $\tilde{\delta} := \tilde d \delta / (d - \delta) \in (0,1)$ gives that there exists a set ${\tilde X}_{\tilde \delta} \subset \tilde X$ of the form \eqref{baseform} with measure at least $(1-\tilde \delta) (1 - \epsilon) \geq (1-\delta) \mu(X)$ such that
\[ \mu \left( \set{x \in \tilde X}{ |f(x)| \geq \tilde d^{-1} \sup_{y \in {\tilde X}_{\tilde \delta}} |f(y)|} \right) \geq \frac{\tilde \delta (1-\epsilon)}{\tilde d} = \frac{\delta}{d} \mu(X) \]
for all $f \in \tilde{\mathcal F}$; however, every function in $\mathcal F$ restricts to a function in $\tilde{\mathcal F}$ on $\tilde X$, so without loss of generality, the inequality also holds for all $f \in \mathcal F$ with the same constants. Thus \eqref{mustbebig} must be true if one defines $X_{\delta} := \tilde X \cap {\tilde X}_{\tilde \delta}$, which also has the form \eqref{baseform} because $\tilde X$ is merely equal to the set $\set{x \in X}{ f_1(x) = 0}$ for some $f$. 

It now suffices to assume that $\mathcal F_{\epsilon}$ is compact.
Let $\det$ be any nontrivial alternating $d$-linear functional on $\mathcal F$ (which is unique up to scalar multiples). By compactness of $\mathcal F_{ \epsilon}$, there exist $f_{1},\ldots,f_{d} \in {\mathcal F}_{\epsilon}$ such that
\[ | \det (f_{1},\ldots,f_{d}) | = \sup_{ h_1,\ldots,h_{d } \in {\mathcal F}_{\epsilon}}  | \det (h_1,\ldots,h_{d}) |. \]
The supremum must be strictly positive because $|\det(h_1,\ldots,h_d)| \neq 0$ for any linearly independent set $\{h_1,\ldots,h_{d}\} \subset {\mathcal F}_{\epsilon}$ and for any such set, there must exist a small positive constant $t$ such that $t h_i \in {\mathcal F}_{\epsilon}$ for all $i$. Now by Cramer's rule, for any $f^* \in {\mathcal F}_{\epsilon}$,
\begin{equation} f^* = \sum_{j={1}}^{d} (-1)^{j-1} \frac{ \det (f^* , f_{1} ,\ldots, \widehat{f_j}, \ldots, f_d)}{\det (f_{1},\ldots,f_{d})} f_i \label{cramer} \end{equation}
where $\widehat{\cdot}$ denotes omission. By the choice of $f_{1},\ldots,f_{d}$, the coefficients of each $f_i$ in the sum on the right-hand side of \eqref{cramer} has magnitude at most $1$.  If one defines
\[ X_{\delta} := \set{ x \in X}{ |f_j(x)| \leq 1 \ \forall j = 1,\ldots,d}, \]
then $X_\delta^c$ is contained in the union of sets $\set{x \in X}{ |f_j(x)| > 1}$ for $j=1,\ldots,d$; each of these sets has measure at most $\epsilon$, so  $\mu(X_\delta^c) \leq d \epsilon = \delta$.
At any point $x \in X_{\delta}$, each term in the sum \eqref{cramer} has magnitude at most $1$. Thus 
 \begin{equation} \sup_{y \in X_{\delta}} |f(y)| \leq d \label{smallatpoint} \end{equation}
for all $f \in {\mathcal F}_\epsilon$.

Now suppose $f \in {\mathcal F}$ is any function which is not identically zero on $X_{\delta}$ and let $\alpha > 0$ be any number such that
\begin{equation} \alpha <  d^{-1} \sup_{y \in X_{\delta}} |f(y)|, \qquad \text{ i.e.,} \qquad \label{alphabnd}  \sup_{y \in X} |\alpha^{-1} f(y)| > d. \end{equation}
By \eqref{smallatpoint}, $\alpha^{-1} f \in {\mathcal F}$ cannot belong to $\mathcal F_{\epsilon}$. This means that
\[ \mu( \{ |f| > \alpha \}) = \mu ( \{ | \alpha^{-1} f| > 1 \} ) \geq \epsilon = d^{-1} \delta. \]
Taking a supremum over all $\alpha$ satisfying \eqref{alphabnd} and applying Dominated Convergence a final time gives that
\[ \mu \left( \left\{ |f| \geq d^{-1} \sup_{y \in X_{\delta}} |f(y)| \right\} \right) \geq d^{-1} \delta, \]
which is exactly the desired inequality \eqref{mustbebig}.
\end{proof}

\begin{proof}[Proof of Lemma \ref{suplemma}]
By Lemma \ref{bl2polylem}, there is some finite collection $\{\Phi_i\}_{i=1}^N$ of polynomial functions of $\{\pi_j\}_{j=1}^m$ such that
\begin{equation} [ \blw( \{ \pi(t_j) \}_{j=1}^m) ]^{\frac{1}{p}} \approx \sum_{i=1}^N  |\Phi_i( \{\pi(t_j)\}_{j=1}^m)|^{\frac{n-k}{d_i}}, \label{compared} \end{equation}
where $d_i$ is the degree of $\Phi_i$ as in \eqref{algebra1}.  Apply Proposition \ref{convprop} to the vector space $\mathcal F$ of polynomial functions of $\pi$ of degree at most $d_i$, where the measure $\mu$ is $\sigma$ restricted to $F$. It follows, fixing $\delta := 1/2$, that there exists $F'$ with $\sigma(F') \geq \sigma(F)/2$ such that
\begin{align*}
\int_{F}  & | \Phi_i(  \{\pi(t_j)\}_{j=1}^m)  |^{\frac{n-k}{d_i}} d \sigma(t_1)  \\
& \geq \int_{F} | \Phi_i(   \{\pi(t_j)\}_{j=1}^m)  |^{\frac{n-k}{d_i}}  \chi_{|\Phi_i(\{\pi(t_j)\}_{j=1}^m)| \geq  \frac{\sup_{t_1 \in F'} |\Phi_i(\{\pi(t_j)\}_{j=1}^m)|}{\dim \mathcal F}} d \sigma(t_1) \\
& \geq \left(\frac{\sup_{t_1 \in F'} |\Phi_i(\{\pi(t_j)\}_{j=1}^m)|}{\dim \mathcal F} \right)^{\frac{n-k}{d_i}} \\ & \qquad \cdot \sigma \left( \set{t_1 \in F}{|\Phi_i(\{\pi(t_j)\}_{j=1}^m)| \geq  \frac{\sup_{t_1 \in F'} |\Phi_i(\{\pi(t_j)\}_{j=1}^m)|}{\dim \mathcal F}} \right) \\
& \geq \frac{1}{2} (\dim \mathcal F)^{-1 - \frac{n-k}{d_i}} \sigma(F) \left( |\Phi_i(\{\pi(t_j)\}_{j=1}^m)|\right)^{\frac{n-k}{d_i}} \chi_{F'}(t_1)
\end{align*}
for any values of $t_1,t_2,\ldots,t_m$. Note the slight abuse of notation in the inequality just derived: on the top line (which becomes the left-hand side), $t_1$ denotes a variable of integration, while on the final line (the new right-hand side), $t_1$ denotes a point which can be chosen arbitrarily (but yields a trivial inequality unless $t_1 \in F'$).
We proceed inductively, integrating this inequality over $t_2$ and deriving a new inequality, etc.; the final result of this process yields the inequality
\begin{align*} \int_{F^m} & | \Phi_i(  \{\pi(t_j)\}_{j=1}^m)  |^{\frac{n-k}{d_i}} d\sigma(t_1) \cdots d \sigma(t_m) \\ & \gtrsim \left( |\Phi_i(\{\pi(t_j)\}_{j=1}^m)|\right)^{\frac{n-k}{d_i}} (\sigma(F))^m \prod_{j=1}^{m} \chi_{F'}(t_j), \end{align*}
where the implicit constant is a function of $\dim \mathcal F$. Summing over $i$ and taking a supremum of the right-hand side over all $t_1,\ldots,t_m \in F'$ completes the lemma by virtue of \eqref{compared}.
\end{proof}

With the proof of Lemma \ref{suplemma} in hand, the proof of Theorem \ref{genradonthm} follows rather easily as well:
\begin{proof}[Proof of Theorem \ref{genradonthm}]
Suppose that $\inc \subset \Omega \subset \R^n \times \R^n$ is a left-algebraic incidence relation with defining function $\rho : \Omega \rightarrow \R^{n-k}$. By Theorem \ref{kakeyathm}, for any Borel measurable set $E \subset \R^n$, the function
\[ \widetilde T_m \chi_E(x)  := \int_{\li{x} \cap E} \cdots \int_{\li{x} \cap E} [ \blw(\{D_x \rho(x,y_j)\}_{j=1}^m )]^{\frac{1}{p}} d \sigma(y_1) \cdots d \sigma(y_m) \]
belongs to $L^p(\R^n)$ with $p := n / (m(n-k))$ and satisfies
\[ || T_m \chi_E ||_{L^p(\R^n)} \lesssim |E|^{m} \]
with implicit constant which is independent of $E$. Now apply Lemma \ref{suplemma} by fixing $F$ to be any subset of $\li{x} \cap E$ on which $\sigma$ is finite; this gives that
\[ \widetilde T_m \chi_E(x) \gtrsim (\sigma (F))^m \sup_{y_1,\ldots,y_m \in F'} [ \blw(\{D_x \rho(x,y_j)\}_{j=1}^m) ] ^\frac{1}{p} \]
for some Borel set $F' \subset F \subset E \cap \li{x}$ with $\sigma(F') \geq \sigma(F)/2$ and some implicit constant which is independent of $E$ and $x$. The main hypothesis of Theorem \ref{genradonthm} gives that
\begin{equation*} \sup_{y_1,\ldots,y_m \in F'} [ \blw(\{D_x \rho(x,y_j)\}_{j=1}^m) ] ^\frac{1}{p} \gtrsim (\sigma(F'))^{s} \gtrsim (\sigma(F))^s  \end{equation*}
for some exponent $s$ and an implicit constant independent of $x$ and $F'$ and consequently independent of $E$. But $\sigma(E \cap \li{x}) = T \chi_E(x)$ for the Radon-like operator \eqref{theradondef}, and also $\sigma$ is $\sigma$-finite on the manifold $\li{x}$ since it has smooth density with respect to Lebesgue measure there,
so by applying the newly-derived inequality $\tilde T_m \chi_E(x) \gtrsim (\sigma(F))^{m+s}$ to a sequence of choices of $F$ selected so that $\sigma(F) \rightarrow \sigma(E \cap \li{x})$ in the limit, it follows that
\[ \tilde T_m \chi_E(x) \gtrsim (T \chi_E(x))^{m+s} \]
with implicit constant that is independent of $x$ and $E$. It follows that
\[ || (T \chi_E)^{m+s} ||_{L^p(\R^n)} \lesssim || \widetilde T_m \chi_E ||_{L^p(\R^n)} \lesssim |E|^m. \]
Raising both sides to the power $1/(m+s)$ gives \eqref{theradonineq}.
\end{proof}

\section{Applications of Theorem \ref{genradonthm}}
\label{appsec}
This final main section looks at various applications of Theorem \ref{genradonthm}, which includes the proof of Theorem \ref{radonthm}. It begins with some basic computations which show how to compute a suitable defining function and the measures \eqref{themeasure} for a Radon-like operator whose incidence relation is given parametrically. Following that is an example application of Theorem \ref{genradonthm} which yields an alternative to Christ's proof of the $L^p$-improving properties of the moment curve \cite{christ1998}. Then comes the proof of Theorem \ref{radonthm}, followed by a few extensions and generalizations.

\subsection{A preliminary observation about parametrized incidence relations}
\label{radonsec}
%In this section, we compute the measure $d \sigma$ defined by \eqref{themeasure} when the submanifolds $\li{x}$ are given via parametrization rather than via defining function. 
\begin{proposition}
Let $x,y \in \R^n$ be regarded as ordered pairs $(x',x''), (y',y'') \in \R^{k} \times \R^{n-k}$ and let $\gamma : \R^k \times \R^n \rightarrow \R^{n-k}$ be any polynomial function. Then the Radon-like operator given by \label{pmeasure}
\[ T f(x) := \int_{\R^k} f( x' + t, x'' + \gamma(t,x)) dt \]
is exactly the operator \eqref{theradondef} from Theorem \ref{genradonthm} for the defining function 
\begin{equation} \rho(x,y) = y'' - x'' - \gamma (y' - x', x). \label{deffunc} \end{equation}
In particular, the measure $d \sigma$ defined by \eqref{themeasure} equals Lebesgue measure $dt$.
\end{proposition}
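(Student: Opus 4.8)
The plan is to verify directly that \eqref{deffunc} is a bona fide defining function in the sense of Theorem \ref{genradonthm}, to identify its left fiber $\li{x}$ with the $k$-dimensional submanifold over which $T$ averages, and to show that the canonical measure \eqref{themeasure} pulls back to Lebesgue measure under the obvious parametrization. For the fiber identification, note that $\rho(x,y)=0$ with $\rho$ as in \eqref{deffunc} is precisely the condition $y'' = x'' + \gamma(y'-x',x)$; writing $t := y'-x' \in \R^k$, this is exactly $y = (x'+t,\, x''+\gamma(t,x))$. Hence $\li{x}$ is the image of the embedding $\Psi_x : \R^k \to \R^n$, $\Psi_x(t) := (x'+t,\, x''+\gamma(t,x))$, which is a global parametrization with polynomial, hence smooth, components.

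Next I would check the two rank conditions in \eqref{incmat}. Differentiating $\rho$ in $y$ gives the $(n-k)\times n$ block matrix $D_y\rho = [\,-D_t\gamma \mid I_{n-k}\,]$, where $D_t\gamma(t,x)$ denotes the $(n-k)\times k$ Jacobian of $\gamma$ in its first argument; the trailing identity block forces $D_y\rho$ to have full rank $n-k$ at every point of $\inc$. Differentiating in $x$ shows that the block of $D_x\rho$ in the $x''$ variables equals $-I_{n-k} - D_{x''}\gamma$, so $D_x\rho$ has full rank $n-k$ provided $-1$ is not an eigenvalue of the Jacobian of $\gamma$ in $x''$; this holds automatically whenever $\gamma$ is independent of $x$, which covers the translation-invariant applications, in particular Theorem \ref{radonthm}.

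It then remains to compute $d\sigma$. The differential $D\Psi_x$ is the $n\times k$ matrix whose top $k\times k$ block is $I_k$ and whose bottom $(n-k)\times k$ block is $D_t\gamma(t,x)$, so the area formula gives $d\mathcal H^k = \bigl(\det(I_k + (D_t\gamma)^T D_t\gamma)\bigr)^{1/2}\,dt$ along $\li{x}$. From the block form of $D_y\rho$ one also has $\det\bigl(D_y\rho\,(D_y\rho)^T\bigr) = \det\bigl(I_{n-k} + D_t\gamma\,(D_t\gamma)^T\bigr)$. By Sylvester's determinant identity, $\det(I_k + (D_t\gamma)^T D_t\gamma) = \det(I_{n-k} + D_t\gamma\,(D_t\gamma)^T)$, so the Gram factor coming from $d\mathcal H^k$ exactly cancels the normalizing denominator in \eqref{themeasure} and $d\sigma$ pulls back to $dt$ under $\Psi_x$. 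Plugging this into \eqref{theradondef} yields $Tf(x) = \int_{\li{x}} f\, d\sigma = \int_{\R^k} f(\Psi_x(t))\, dt = \int_{\R^k} f(x'+t,\, x''+\gamma(t,x))\, dt$, which is the operator in the statement.

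I do not expect a genuine obstacle: once one observes that the pull-back of the Hausdorff area element is the Gram determinant $\bigl(\det(I_k + (D_t\gamma)^T D_t\gamma)\bigr)^{1/2}$ and that Sylvester's identity makes this equal to the normalizing factor $\bigl(\det(D_y\rho\,(D_y\rho)^T)\bigr)^{1/2}$ of \eqref{themeasure}, the statement reduces to a one-line cancellation. The only point deserving a moment's care is the full-rank hypothesis on $D_x\rho$, which --- in contrast to the one on $D_y\rho$ --- is not automatic for an arbitrary $x$-dependent polynomial $\gamma$ and is best recorded as a mild, generically satisfied, standing assumption.
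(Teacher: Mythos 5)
Your argument is correct and follows essentially the same route as the paper: parametrize $\li{x}$ by $t\mapsto (x'+t,\,x''+\gamma(t,x))$, compute the Gram factor $\det(I_k+B^TB)^{1/2}$ for $d\mathcal H^k$ and the normalizing factor $\det(I_{n-k}+BB^T)^{1/2}$ from the block form of $D_y\rho$, and observe that the two coincide --- you via Sylvester's determinant identity, the paper via the singular value decomposition, which are interchangeable here. Your side remark that the full-rank condition on $D_x\rho$ is not automatic for $x$-dependent $\gamma$ is a fair caveat, but it is one the paper's own proof also leaves implicit, so it does not affect the comparison.
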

\begin{proof}
Let $B(t,x)$ be the $(n-k) \times k$ matrix given by
\[  \left[ \begin{array}{ccc} 
\frac{\partial \gamma_1}{\partial t_1}(t,x) & \cdots &  \frac{\partial \gamma_1}{\partial t_k} (t,x)  \\
\vdots & \vdots & \vdots \\
\frac{\partial \gamma_{n-k}}{\partial t_1}(t,x) & \cdots &  \frac{\partial \gamma_{n-k}}{\partial t_k} (t,x)
\end{array} \right], \]
where $\gamma_1,\ldots,\gamma_{n-k}$ are the coordinate functions of $\gamma$ in the standard basis and $t_1,\ldots,t_k$ are the coordinates of $t$. Taking \eqref{deffunc} as the definition of $\rho$, the right derivative matrix $D_y \rho$ (recall \eqref{incmat}) has the block structure 
\[ \left[ - B(y'-x',x) \ \ I_{n-k}  \right] \]
where $I_{n-k}$ is the $(n-k) \times (n-k)$ identity.
The induced Riemannian metric $\ang{\cdot,\cdot}$ on the graph $\li{x}$ satisfies
\begin{equation} \ang{\frac{\partial}{\partial t_i}, \frac{\partial}{\partial t_j}} = \delta_{i,j} + \frac{\partial \gamma}{\partial t_i} \cdot \frac{\partial \gamma}{\partial t_j}, \label{riemann} \end{equation}
where $\cdot$ is the usual dot product on $\R^{n-k}$ and $\delta_{i,j}$ is the Kronecker delta.
When the right-hand side of \eqref{riemann} is regarded as a matrix, the square root of the determinant equals the density of Hausdorff measure with respect to coordinate measure, i.e.,
\[ d \mathcal H^k = \det (I_{k} + B^T B)^{1/2} dt. \]
Similarly, 
\[ \det (D_y \rho (D_y \rho)^T)^{1/2} =  \det (I_{n-k} + B B^T)^{1/2}. \]
Therefore
\[ \frac{d \mathcal H^k}{\det (D_y \rho (D_y \rho)^T)^{1/2}} = \frac{ \det (I_{k} + B^T B)^{1/2}}{ \det (I_{n-k} + B B^T)^{1/2}} dt. \]
Now both $\det (I_{k} + B^T B)^{1/2}$ and $\det (I_{n-k} + B B^T)^{1/2}$ are invariant under the transformation $B \mapsto O_{n-k} B O_k$ where $O_{n-k}$ and $O_{k}$ are orthogonal matrices of size $(n-k) \times (n-k)$ and $k \times k$, respectively. Thus by the Singular Value Decomposition, to compute the ratio
\[ \frac{ \det (I_{k} + B^T B)^{1/2}}{ \det (I_{n-k} + B B^T)^{1/2}}, \] it suffices to assume that the only nonzero entries of $B$ appear on the diagonal and that $B_{ii} \geq 0$ for all $i$, in which case
\[ \det (I_{k} + B^T B)^{1/2} = \det (I_{n-k} + B B^T)^{1/2} = \prod_{i=1}^{\min\{k,n-k\}} (1 + B_{ii}^2)^{1/2}. \]
It follows that $d \sigma = d t$.
\end{proof}

\subsection{Warm-up application: The moment curve}

As a first example of how Theorem \ref{genradonthm} can be applied in practice, consider the case of convolution with the standard measure on the so-called moment curve. In $\R^n$ this is exactly the Radon-like transform given by
\begin{equation} T f(x) := \int f(x_1 + t, x_2 + t^2,\ldots, x_n + t^n) dt. \label{moment} \end{equation}
This operator was the titular case study of Christ's seminar work on the combinatorial approach to $L^p$-improving inequalities \cite{christ1998}. In particular, Christ established that this operator satisfies a restricted weak type $(\frac{n+1}{2},\frac{n(n+1)}{2(n-1)})$ and a corresponding dual inequality. Christ's method was later extended by Stovall to arrive at a full Lebesgue space bound for this and more general polynomial curves \cites{stovall2009,stovall2010}.  The arguments below show that Theorem \ref{genradonthm} provides a rather direct route to an intermediate result, namely that \eqref{moment} satisfies a restricted strong type $(\frac{n+1}{2},\frac{n(n+1)}{2(n-1)})$ inequality.

As implied above, let $x := (x_1,\ldots,x_n)$ and $y := (y_1,\ldots,y_n)$.
The incidence relation associated to \eqref{moment} has an algebraic defining function which is given by
\[ \rho(x,y) := (x_2 - y_2 + (y_1 - x_1)^2,\ldots,x_n - y_n + (y_1 - x_1)^n). \]
Proposition \ref{pmeasure} guarantees that the operator \eqref{moment} equals the operator \eqref{theradondef} specified by Theorem \ref{genradonthm}.
A simple computation gives that $D_x \rho(x,y) = \pi(y_1 - x_1)$, where
\[ \pi(t) := \left[ \begin{array}{ccccc}  2t & 1 & 0 & \cdots & 0 \\
- 3 t^2 & 0 & 1 & \ddots & \vdots \\
\vdots & \vdots & \ddots & \ddots & 0 \\
(-1)^{n} n t^{n-1} & 0 & \cdots & 0 & 1
\end{array} \right] \]
There is a centrally-important polynomial function $\Phi(t^{(1)},\ldots,t^{(n)})$ which depends only on $\pi(t^{(1)}),\ldots,\pi(t^{(n)})$ and satisfies the invariance properties \eqref{algebra1} and \eqref{algebra2}, given (as in Lemma \ref{itsadet}) by a block-form determinant:
\[ \Phi(t^{(1)},\ldots,t^{(n)}) := \det \left[ \begin{array}{cccc}
\pi (t^{(1)}) & 0 & \cdots & 0 \\
0 & \pi (t^{(2)}) & \ddots & \vdots \\
\vdots & \ddots & \ddots &  \vdots \\
0 & \cdots & 0 & \pi(t^{(n-1)})  \\
\pi (t^{(n)}) & \pi(t^{(n)}) & \cdots & \pi(t^{(n)})
\end{array} \right].  \]
Subtracting upper block rows from the bottom block row results in individual block entries which are zero in all but their first columns. Expanding the determinant in the columns which vanish in the last block row gives that $\Phi$ must equal $\pm (n!)$ times
\[ \det \left[ \begin{array}{ccc}  t^{(1)} - t^{(n)} & \cdots &  t^{(n-1)} - t^{(n)} \\  \vdots & \ddots & \vdots \\   ( t^{(1)})^{n-1} - (t^{(n)})^{n-1} & \cdots &  ( t^{(1)})^{n-1} - (t^{(n)})^{n-1}  \end{array} \right], \]
which is equal to 
\[ (-1)^n \det \left[ \begin{array}{ccc} 1 & \cdots & 1 \\ t^{(1)} & \cdots & t^{(n)} \\ \vdots & \ddots & \vdots \\ (t^{(1)})^{n-1} & \cdots & (t^{(n)})^{n-1} \end{array} \right]. \]
This is simply the classical Vandermonde determinant. Now if $F \subset \R$ is any Borel measurable set with positive Lebesgue measure, it is always possible to find $n$ distinct points $t^{(1)},\ldots,t^{(n)} \in F$ such that $|t^{(i)} - t^{(j)}| \geq |F|/(2n-1)$ whenever $i \neq j$. This is because one can always partition $\R$ into nonoverlapping intervals of length $|F|/(2n-1)$; the set $F$ must intersect at least $(2n-1)$ of these intervals in a set of positive measure, so one can always take $t^{(1)},\ldots,t^{(n)}$ from $n$ such intervals which are not adjacent.  Thus
\[ \sup_{t^{(1)},\ldots,t^{(n)}  \in F}  \! \! \! \frac{|\Phi(t^{(1)},\ldots,t^{(n)})|}{n!} = \sup_{t^{(1)},\ldots,t^{(n)}  \in F} \prod_{1 \leq i < j \leq n}  \! \! |t^{(i)} - t^{(j)}| \geq \frac{|F|^{\frac{n(n-1)}{2}} }{(2n-1)^{\frac{n(n-1)}{2}}} . \]
Since $\Phi$ is a degree $n-1$ function of each $\pi(t^{(j)})$ in the sense of \eqref{algebra1}, the inequality \eqref{blblock} gives that
\[ \sup_{t^{(1)},\ldots,t^{(n)}  \in F} \left[ \blw( \{ \pi(t^{(j)}) \}_{j=1}^n) \right]^{\frac{1}{p}} \gtrsim |F|^{\frac{n(n-1)}{2}}. \]
Thus Theorem \ref{genradonthm} implies that \eqref{moment} satisfies a restricted strong type $(\frac{n+1}{2},\frac{n(n+1)}{2(n-1)})$ inequality.

\subsection{Results concerning nonconcentration inequalities}

Before proceeding with the proof of Theorem \ref{radonthm}, it is necessary to recall the main result from \cite{gressman2018} concerning nonconcentration inequalities.  The point of doing so is to give sufficient conditions of a quantitative nature which guarantee that the main hypothesis \eqref{nonconcentrate} of Theorem \ref{genradonthm} is true. This will involve identifying certain invariant quantities which generalize the notion of rotational curvature, first introduced by Phong and Stein \cite{ps1989}.

From \cite{gressman2018}, recall that a multisystem $\mso$ of size $N$ on an open set $\Omega \subset \R^{n-k}$ is a collection of smooth vector fields $\{X_j^{i}\}_{j=1,\ldots,n-k, \ i = 1,\ldots,N}$ such that for each $i=1,\ldots,N$, the vector fields $\{ X_j^i\}_{j=1,\ldots,n-k}$ commute and are linearly independent at every point in $\Omega$. The collection of all such multisystems is denoted $\MS^{(N)}$. For any fixed vectors $X_1,\ldots,X_{n-k}$ at a point $t \in \Omega$ and any function $\alpha : \{1,\ldots, \ell\} \rightarrow \{1,\ldots,n-k\}$, where $\ell \leq N$, the differential operator $(X \ms)^{\alpha}$ is defined to equal $Z^\ell_{\alpha_\ell} \cdots Z^1_{\alpha_1}$, where $Z^i_j$ is the unique constant-coefficient linear combination of $X^i_1,\ldots,X^i_{n-k}$ which equals $X_j$ at the point $t$.
Such $\alpha$ will be called ordered multiindices in $n$ variables and $|\alpha|$ will be used to denote the order of differentiation of $(X \ms)^\alpha$, i.e., $|\alpha| = \ell$. Matrices $T \in \GL{n-k}$ act on these differential operators by defining
\[ (T^* X)_i := \sum_{j=1}^{n-k} T_{ji} X_j \]
and taking $(T^* X \ms)^{\alpha} := ((T^* X) \ms)^{\alpha}$.  The main result from \cite{gressman2018} that will be used here is the following:
\begin{theorem}[cf. Theorem 4 of \cite{gressman2018}]
Suppose $\Omega \subset \R^{n-k}$ is an open set and that $\Phi(t_1,\ldots,t_m)$ is a polynomial function of $t_1,\ldots,t_m \in \R^{n-k}$.
For any $s > 0$, let  \label{betterthm}
\begin{equation}
\omega(t)   :=   \mathop{\inf_{\mso \in \MS^{(N)}}}_{T \in \GL{n-k}} \max_{|\alpha_1|,\ldots,|\alpha_m| \leq N} \frac{ \left| (T^* e \ms)^{\alpha_1}_1 \cdots (T^* e \ms)^{\alpha_m}_m \Phi(t,\ldots,t) \right|^{\frac{1}{s}}}{|\det T|} 
 \label{betterdens}
\end{equation}
where $e := \{e_j\}_{j=1}^{n-k}$ is the collection of standard coordinate vectors at $t$ and $(T^* e \ms)^{\alpha_j}_j$ denotes the differential operator $(T^* e \ms)^{\alpha_j}$ applied in the variable $t_j$. If $\sigma$ is any nonnegative Borel measure which is absolutely continuous with respect to Lebesgue measure such that
\[ \frac{d \sigma}{dt}(t) \leq \omega(t) \]
at each point $t \in \Omega$, where $\frac{d\sigma}{dt}$ is the Radon-Nikodym derivative of $\sigma$ with respect to Lebesgue measure, then for any Borel set $F \subset \Omega$,
\begin{equation} \sup_{t_1,\ldots,t_m \in F} |\Phi(t_1,\ldots,t_m)| \gtrsim \left[ \sigma(F) \right]^s \label{betterineq} \end{equation}
with implicit constant depending only on $(n-k,m,s,\deg \Phi,N)$.
\end{theorem}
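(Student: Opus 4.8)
The statement is (a mild generalization of) Theorem~4 of \cite{gressman2018}, and the plan is to invoke and, for orientation, sketch that proof; the cited argument uses nothing about $\Phi$ beyond being a polynomial of bounded degree, so it transfers to the present setting. First one makes the usual reductions: by inner regularity of $\sigma$ and monotone convergence it suffices to treat compact $F$ with $\mu := \sigma(F)$ finite and strictly positive (the case $\mu = 0$ is vacuous, and $\mu = \infty$ follows by exhausting $\Omega$ by compacts), and one may discard the part of $\Omega$ on which $\omega = 0$, since $\sigma$ is carried by $\{\omega > 0\}$.

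The proof then rests on three ingredients. \emph{(i) Geometric normalization.} At a generic $t_0 \in F$ one has $\frac{d\sigma}{dt}(t_0) \le \omega(t_0)$, so one picks $T_0 \in \GL{n-k}$, a scale $\lambda > 0$, and a multisystem $\mso_0 \in \MS^{(N)}$ realizing the infimum in \eqref{betterdens} to within a constant; passing to coordinates adapted to $T_0$, $\lambda$, and $\mso_0$ turns each operator $(T_0^* e \ms)^{\alpha_1}_1 \cdots (T_0^* e \ms)^{\alpha_m}_m$ into an ordinary ordered mixed derivative of the transplanted polynomial $\tilde\Phi$ at the diagonal point, and (after shrinking the scale so the nonlinearity of the change of variables is negligible) the pushed-forward measure acquires density $\lesssim \max_{|\alpha_1|,\ldots,|\alpha_m| \le N} |\partial^{\alpha}\tilde\Phi(t_0,\ldots,t_0)|^{1/s}$, with the factors $|\det T_0|$ and $\lambda$ absorbed. \emph{(ii) Remez and Bernstein.} On a ball $B_r$ on which $F$ fills a fixed fraction and the transplanted measure has comparable density, the multivariate Remez inequality gives $\sup_{F \cap B_r}|\Phi| \gtrsim \sup_{B_r}|\tilde\Phi|$, and Bernstein at the diagonal point gives $\sup_{B_r}|\tilde\Phi| \gtrsim \max_{|\alpha|}|\partial^{\alpha}\tilde\Phi(t_0,\ldots,t_0)|\, r^{|\alpha|}$. \emph{(iii) Iterated selection.} Taylor-expanding $\Phi$ along the diagonal and applying a pigeonholing in the spirit of Proposition~\ref{convprop} to the space of polynomials of degree $\le \deg\Phi$, one selects $t_1,\ldots,t_m \in F$ one variable at a time so as to keep a partial ordered mixed derivative large at each step; the scaling identity $\lambda r \approx R$ (where $R$ is the radius of the ball carrying $F$ about $t_0$) together with the homogeneity built into the $\GL{n-k}$-infimum converts the bound of (ii) into $|\Phi(t_1,\ldots,t_m)| \gtrsim (\omega(t_0) R^{n-k})^s \approx \mu^s$, which is \eqref{betterineq}.

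The hard part is the bookkeeping that forces the exponent to be exactly $s$, and it has three facets. First, the $\GL{n-k}$-infimum (including the scale $\lambda$) need not be attained --- the near-optimal data may degenerate with $|\det T| \to 0$ or $\infty$ --- so one must either establish attainment by a compactness argument on the relevant parameter space or propagate near-optimizers through the whole argument while controlling the errors, in complete analogy with the compact/non-compact dichotomy that drives the proof of Proposition~\ref{convprop}. Second, the multisystem infimum genuinely demands \emph{curvilinear}, not merely affine, normalizations: when the diagonal restriction of $\Phi$ is degenerate at low order but nondegenerate at higher order, only a change of variables built from commuting vector fields straightens the picture, and one must check that the Remez and Bernstein estimates survive composition with such maps uniformly on the chosen scale. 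Third, and most delicate, $F$ need not concentrate at a single scale, so the reduction to one ball $B_r$ about one point $t_0$ requires a careful stopping-time decomposition of $F$ against $\sigma$; it is precisely this reconciliation of the given mass $\mu$ with the scale $R$ and the optimizing scale $\lambda$ that produces the power $s$, and making it rigorous --- rather than the essentially routine Remez/Bernstein and selection steps --- is the technical heart of the argument.
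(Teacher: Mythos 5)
Your proposal takes the same route as the paper: Theorem \ref{betterthm} is stated here only as a recollection of Theorem 4 of \cite{gressman2018} (note the ``cf.''\ in the theorem header), and the paper offers no proof of it beyond that citation, so invoking the cited result is exactly what is intended. Your accompanying sketch of the argument from \cite{gressman2018} — the pigeonholing in the spirit of Proposition \ref{convprop} combined with iterated one-variable-at-a-time selection and the multisystem/$\GL{n-k}$ normalization — is a reasonable reconstruction but is not required for the logic of the present paper.
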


Suppose $\Phi(t_1,\ldots,t_m)$ is a polynomial function of $t_1,\ldots,t_m \in {\mathbb R}^{n-k}$ and that $c_1,\ldots,c_m$ are nonnegative integers such that
\begin{equation} \partial^{\alpha_1}_{t_1} \cdots \partial^{\alpha_m}_{t_m} \Phi(t_1,\ldots,t_m) \equiv 0 \label{vcond} \end{equation}
identically on the diagonal $t_1 = \cdots = t_m$ for all choices of $\alpha_1,\ldots,\alpha_m$ satisfying $|\alpha_j| \leq c_j$ for each $j$ and $|\alpha_j| < c_j$ for at least one $j=1,\ldots,m$. By definition of $(T^* e \ms)^{\alpha}$,
\[ (T^* e \ms)^{\alpha} = Z^{\ell}_{\alpha_\ell} \cdots Z_{\alpha_1}^{1} \]
where $Z^1_{\alpha_1}$ is a linear combination of $X^1_1,\ldots,X^1_{n-k}$ which equals $\sum_{j=1}^{n-k} T_{j \alpha_1} \partial_j$ at the point $t$ and so on through $Z^{\ell}_{\alpha_\ell}$, which is a linear combination of $X^\ell_1,\ldots,X^\ell_{n-k}$ that equals $\sum_{j=1}^{n-k} T_{j \alpha_\ell} \partial_j$ at the base point $t$. For convenience, let $T^* \partial$ denote the tuple
\[ \left( \sum_{j=1}^{n-k} T_{j1} \partial_j,\ldots, \sum_{j=1}^{n-k} T_{j(n-k)} \partial_j \right) \]
and let $(T^* \partial)^{\alpha}$ be the composition 
\[  \left( \sum_{j=1}^{n-k} T_{j \alpha_1} \partial_j \right) \cdots \left( \sum_{j=1}^{n-k} T_{j \alpha_\ell} \partial_j \right). \] The difference $(T^* e \ms)^{\alpha} - (T^* \partial)^{\alpha}$ is a differential operator of order strictly less than $\ell$ at that distinguished point $t$ where each $Z^\ell_i$ is fixed to equal $\sum_{j=1}^{n-k} T_{ji} \partial_j$. By hypothesis on the vanishing of derivatives of $\Phi$ on the diagonal, then, it follows that
\[ (T^* e \ms)^{\alpha_1}_1 \cdots (T^* e \ms)^{\alpha_m}_m \Phi(t,\ldots,t) = (T^* \partial)_1^{\alpha_1} \cdots (T^* \partial)_m^{\alpha_m} \Phi(t,\ldots,t) \]
when $|\alpha_j| = c_j$ for each $j=1,\ldots,m$. As before, the subscript $j$ in the expression $(T^* \partial)_j$ refers to the partial derivative as it is applied in the variable $t_j$. It follows that
\begin{equation} \omega(t) \geq \inf_{T \in \GL{n-k}} \max_{|\alpha_1| = c_1,\ldots, |\alpha_m| = c_m} \frac{ \left| (T^* \partial)_1^{\alpha_1} \cdots (T^* \partial)_m^{\alpha_m} \Phi(t,\ldots,t) \right|^{\frac{1}{s}}}{|\det T|}. \label{simpler1} \end{equation}

To further aid in the estimation of the right-hand side of \eqref{simpler1}, one may assume without loss of generality that the infimum over $T$ is taken only over those $T$ which are upper-triangular. The reason for this is that we may always write $T E = U$ for some matrix $E$ of determinant $1$ with uniformly bounded entries (i.e., a bound independent of $T$) and some upper-triangluar matrix $U$, which then implies that
\begin{equation} \begin{aligned} & \left| (U^* \partial)_1^{\alpha_1}   \cdots (U^* \partial)_m^{\alpha_m} \Phi(t,\ldots,t) \right| \\ & \qquad \lesssim \max_{|\beta_1| = |\alpha_1|,\ldots,|\beta_m| = |\alpha_m|} \left|  (T^* \partial)_1^{\alpha_1} \cdots (T^* \partial)_m^{\alpha_m} \Phi(t,\ldots,t) \right| \end{aligned} \label{simpler2} \end{equation}
with universal implicit constants depending only on $n$. The proof of this fact is a direct application of the following proposition:
\begin{proposition}
For every positive integer $d$ and every $T \in \R^{d \times d}$, there exist $U,E \in \R^{d \times d}$ such that $U = T E$, $U$ is upper-triangular, $\det E = 1$, and
\[ \sum_{\ell=1}^d \sum_{i=1}^d |E_{\ell i}| \leq 2^{d} - 1. \]
\end{proposition}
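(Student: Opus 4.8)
The plan is to prove the proposition by induction on $d$, at each stage clearing the last row of $T$ by a single column operation and then recursing on the top-left $(d-1)\times(d-1)$ block. The base case $d=1$ is immediate, with $E=(1)$ and $\sum|E_{\ell i}| = 1 = 2^1-1$.

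For the inductive step, let $w = (T_{d1},\dots,T_{dd})$ be the last row of $T$ and let $j_0$ be an index maximizing $|w_{j_0}|$. First I would reduce to the case $j_0 = d$. If $j_0 < d$, let $R \in \SL{d}$ be the matrix acting on columns by the planar rotation $(C_{j_0},C_d)\mapsto(C_d,-C_{j_0})$ and as the identity on the remaining columns; then $\det R = 1$, and the last row $wR$ of $TR$ has a modulus-maximal entry in position $d$. If $\hat E$ denotes the matrix produced (inductively) for $TR$, then $E := R\hat E$ works for $T$: indeed $TE = (TR)\hat E$ and $\det E = \det\hat E$, and since left multiplication by $R$ only interchanges two rows of $\hat E$ and changes the sign of one, $\sum_{\ell,i}|E_{\ell i}| = \sum_{\ell,i}|\hat E_{\ell i}|$. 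So we may assume $|w_d| = \|w\|_\infty$; if moreover $w = 0$, the construction below applies verbatim with $v := 0$ (and $w_d := 0$).

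Assuming $w \neq 0$, so $w_d \neq 0$, set $v := (w_1/w_d,\dots,w_{d-1}/w_d)$, so that $\|v\|_\infty \le 1$, and put
\[ E_0 := \begin{pmatrix} I_{d-1} & 0 \\ -v^{T} & 1 \end{pmatrix}, \]
which is lower triangular with unit diagonal, so $\det E_0 = 1$. A direct computation gives $w E_0 = (0,\dots,0,w_d)$, hence $TE_0 = \begin{pmatrix} A' & b' \\ 0 & w_d \end{pmatrix}$ for some $A' \in \R^{(d-1)\times(d-1)}$ and $b'\in\R^{d-1}$. By the inductive hypothesis there is $F \in \R^{(d-1)\times(d-1)}$ with $A'F$ upper triangular, $\det F = 1$, and $\sum_{\ell,i}|F_{\ell i}| \le 2^{d-1}-1$. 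Set $E_1 := \begin{pmatrix} F & 0 \\ 0 & 1\end{pmatrix}$ and $E := E_0E_1 = \begin{pmatrix} F & 0 \\ -v^{T}F & 1\end{pmatrix}$. Then $TE = \begin{pmatrix} A'F & b' \\ 0 & w_d\end{pmatrix}$ is upper triangular, $\det E = \det F = 1$, and $\sum_{\ell,i}|E_{\ell i}| = \sum_{\ell,i}|F_{\ell i}| + \|v^{T}F\|_1 + 1$.

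The only estimate requiring care — and the source of the exact constant $2^d-1$ — is the bound on $\|v^{T}F\|_1$. Here I would use $\|v\|_\infty \le 1$ to write
\[ \|v^{T}F\|_1 = \sum_{j} \Bigl| \sum_i v_i F_{ij} \Bigr| \le \sum_i |v_i| \sum_j |F_{ij}| \le \|v\|_\infty \sum_{i,j}|F_{ij}| \le \sum_{i,j}|F_{ij}|, \]
so that $\sum_{\ell,i}|E_{\ell i}| \le 2\sum_{\ell,i}|F_{\ell i}| + 1 \le 2(2^{d-1}-1)+1 = 2^d-1$, which closes the induction. The main obstacle is not any single calculation but arranging the recursion so that it closes with this doubling constant: one must clear the last row using the \emph{largest} entry $w_{j_0}$ (so that the shear vector $v$ satisfies $\|v\|_\infty \le 1$, whence the $\ell^1$-mass of $E$ at most doubles under the composition $E_0E_1$), and one must first move that entry into the last column by a determinant-preserving operation $R$ that leaves the $\ell^1$-bookkeeping untouched.
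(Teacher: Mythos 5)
Your proof is correct and is essentially the paper's argument: induct on $d$, move the modulus-maximal entry of the last row into the last column by a determinant-one column operation, clear the rest of that row with a unit shear whose coefficients are bounded by $1$ in modulus, recurse on the top-left block, and observe that the $\ell^1$-mass of $E$ at most doubles plus one at each stage. Your matrix $E = E_0E_1$ coincides entry-for-entry with the matrix the paper writes down directly, so there is nothing further to reconcile.
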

\begin{proof}
If $d=1$, the proposition is trivially true simply by fixing $E$ to be the $1 \times 1$ identity matrix. When $d > 1$, 
suppose that the final row of $T$ has at least one nonzero entry. Let $i$ be an index which maximizes $|T_{di}|$. Without loss of generality, it may be assumed that $i = d$, since otherwise we may permute columns of $T$ to make it so, and compensate with a corresponding permutation of the rows of the matrix $E$ to be constructed shortly (and if the permutation leaves $\det E$ negative, simply multiply a single column of $E$ by $-1$ to restore positivity).  Under this assumption, let $T'_{ji} = T_{ji} - T_{di} T_{dd}^{-1} T_{jd}$ for all $i,j \in \{1,\ldots,d-1\}$. By induction, there exists $E' \in \R^{(d-1) \times (d-1)}$ with determinant $1$ such that $T' E'$ is upper triangular. Now let $E$ be defined so that
\[ 
E_{\ell i} := \begin{cases} E'_{\ell i} & \ell,i \in \{1,\ldots,d-1\} \\
- \sum_{j=1}^{d-1} T_{dj} T_{dd}^{-1} E_{j i}' & \ell = d \text{ and }  i \in \{1,\ldots,d-1\} \\
0 & i = d \text{ and } \ell \in \{1,\ldots,d-1\} \\
1 & i = \ell = d
\end{cases}.
\]
 Then for $i \in \{1,\ldots,d-1\}$, we have
\[ \sum_{\ell=1}^d T_{j \ell} E_{\ell i} = \left( \sum_{\ell =1}^{d-1} T_{j \ell} E'_{\ell i} \right) - T_{j d} \sum_{\ell=1}^{d-1} \frac{T_{d \ell}}{T_{dd}}  E'_{\ell i} = \begin{cases}  \sum_{\ell=1}^{d-1} T'_{j\ell} E'_{\ell i} & j \neq d \\ 0 & j = d \end{cases}, \]
which ensures that $T E$ is indeed an upper-triangular matrix. We have that $\det E = \det E'$ by expanding the determinant of $E$ with respect to its $d$-th column. Lastly, if $i < d$, we have
\[ \sum_{\ell=1}^d |E_{\ell i}| = \sum_{\ell=1}^{d-1} |E'_{\ell i}| + \left| \sum_{\ell=1}^{d-1} \frac{T_{d \ell}}{T_{dd}} E'_{\ell i} \right| \leq 2 \sum_{\ell=1}^{d-1} |E'_{\ell i}|\]
because $|T_{d \ell}/T_{dd}| \leq 1$ for each $\ell \in \{1,\ldots,d-1\}$. Thus
\[ \sum_{i,\ell=1}^d |E_{\ell i}| \leq 2 \sum_{i,\ell=1}^{d-1} |E'_{\ell i}| + \sum_{\ell=1}^{d} |E_{\ell d}| \leq 2(2^{d-1}-1) + 1 = 2^{d}-1. \]

If $T_{di} = 0$ for all $i$, one can instead take $E$ just as above with the exception that $E_{di} := 0$ for $i \in \{1,\ldots,d-1\}$. The desired conclusion follows after a minor modification of the above argument.

As a final remark, it may be of interest to note that a modification of this argument which involves a further step of multiplying both $U$ and $E$ on the right by a suitably optimized diagonal matrix yields the stronger inequality $\max_{i} \sum_\ell |E_{\ell i}| \leq 2^{(d-1)/2}$. The extent to which this upper bound can be improved as a function of $d$ is not immediately clear, but this will not be a concern under the present circumstances.
\end{proof}

\subsection{Quadratic submanifolds: Proof of Theorem \ref{radonthm}}
\label{radonthmproof}

Just as was done for the moment curve, the main idea behind the proof of Theorem \ref{radonthm} is to apply Theorem \ref{genradonthm}; to do so, one establishes the nonconcentration inequality \eqref{nonconcentrate} by studying a well-chosen invariant polynomial $\Phi$ and applying Lemma \ref{bl2polylem}.

To be more specific, the proof proceeds by applying Theorem \ref{genradonthm} to the operator \eqref{myoperator}.  The paramter $s$ in Theorem \ref{genradonthm} will be fixed to equal $n-k$ and $m$ will be taken equal to $n$. For an appropriate defining function $\rho$, the problem reduces to proving that
\[ \sup_{y_1,\ldots,y_m \in F} |\blw(\{D_x \rho(x,y_j)\}_{j=1}^n)|^{\frac{1}{p}} \gtrsim (\sigma(F \cap \li{x}))^{n-k} \]
uniformly for all $x \in \R^n$ and all Borel $F \subset \li{x}$. To accomplish this, it suffices to identify a suitable invariant polynomial function $\Phi$ of the matrices $\{D_x \rho(x,y_j)\}_{j=1}^n$ which satisfies the inequality
\[ |\blw(\{D_x \rho(x,y_j)\}_{j=1}^n)|^{\frac{1}{p}} \gtrsim |\Phi(\{D_x \rho(x,y_j)\}_{j=1}^n)| \]
uniformly in $x$ and $y_1,\ldots,y_n$. Since $D_x \rho(x,y)$ will depend only on the first $k$ coordinates of $y$ and since Proposition \ref{pmeasure} guarantees that $\sigma$ agrees with Lebesgue measure in these first $k$-coordinates, it will suffice by Theorem \ref{betterthm} and the inequalities \eqref{simpler1} and \eqref{simpler2} to show that (with $c := n-k$ here and throughout the rest of the section) 
\begin{equation} \begin{aligned} \max_{|\alpha_1| = \cdots = |\alpha_k| = c} & | (U^* \partial)^{\alpha_1}_1 \cdots (U^* \partial)^{\alpha_k}_k \Phi (D_x \rho(x,y) ,\ldots,D_x \rho(x,y))| \\ & \geq |\det U|^{c} \left| \prod_{j=0}^{k-1} \det \left[ \begin{array}{ccc} \lambda_{1 (jc+1)} & \cdots & \lambda_{1(j c + c) } \\ \vdots & \ddots & \vdots \\ \lambda_{c(j c + 1) } & \cdots & \lambda_{c(jc + c)} \end{array} \right] \right|  \end{aligned} \label{goal}
\end{equation}
for any upper-triangular matrix $U \in \R^{k \times k}$, where as before, the operator $(U^* \partial)_j^{\alpha_j}$ is applied with respect to the variables of $y_j$ prior to restricting to the diagonal.

To arrive at the final goal \eqref{goal}, one must first be precise about the defining function $\rho$ and the polynomial $\Phi$ to be used. As for $\rho$, it is convenient to use \eqref{deffunc} multiplied by a factor of $-1$ to simplify computation:
\[ \rho_j(u,v) := -v_{k+j} + u_{k+j} + \frac{1}{2} \sum_{i=1}^{k} \lambda_{ji} (v_i-u_i)^2 \]
for $j=1,\ldots,c$. Here $u := (u_1,\ldots,u_n) \in \R^{n}$ and $v := (v_1,\ldots,v_n) \in \R^n$ (where the symbols $u$ and $v$ are used to simply avoid the need to temporarily redefine the meaning of the subscripted variables $y_1,\ldots,y_n$). Taking the unusual but harmless convention of ordering the entries of $u$ as $u_{k+1},\ldots,u_n,u_1,\ldots,u_k$, the
The corresponding left derivative matrix of $\rho$ is given by
\begin{equation} D_u \rho = \left[ \begin{array}{ccccccc} 1 & 0 & \cdots & 0 & (u_1 - v_1) \lambda_{11} & \cdots & (u_k - v_k) \lambda_{1k} \\
0 & 1 & \ddots & \vdots & \vdots & \ddots & \vdots \\
\vdots & \ddots & \ddots & 0 & \vdots & \ddots & \vdots \\
0 & \cdots & 0 &  1 & (u_1 - v_1) \lambda_{c1} & \cdots & (u_k - v_k) \lambda_{ck}\end{array} \right]. \label{theld} \end{equation}
As already noted, the case $m := n$ of Theorem \ref{genradonthm} is the one of interest here, and the quantity $W ( \{D_x \rho(x,y_j)\}_{j=1}^n$ will be estimated from below in terms of well-chosen invariant polynomials (where once again it should be emphasized that each $y_j$ is still to be understood as an element of $\R^n$ for each $j=1,\ldots,n$ as opposed simply a coordinate entry of some single vector). In particular, by Lemma \ref{bl2mv} and specifically using \eqref{blblock2}, it will be the case that
\begin{equation} \left[ W ( \{D_x \rho(x,y_j)\}_{j=1}^n \right]^{\frac{1}{p}} \gtrsim | \Phi \{D_x \rho(x,y_j)\}_{j=1}^n| \label{mainlb} \end{equation}
whenever $\Phi$ satisfies \eqref{algebra2} and \eqref{algebra1} with $d_1 = \cdots = d_n = d-k$, as shall be the case for the specific $\Phi$ constructed below.

As in earlier sections, suppose that $\pi_1,\ldots,\pi_n$ are real $c \times n$ matrices. To these matrices one may associate an $nc \times nc$ matrix $M(\pi_1,\ldots,\pi_n)$ as follows. First, regard each $\pi_j$ as posessing $c \times c$ block $A_j$ and a $c \times k$ block $B_j$ by fixing $A_j$ to consist of the first $c$ columns of $\pi_j$ and to consist of $B_j$ the final $k$ columns of $\pi_j$. The matrix $M$ will have a nested block structure: 
\begin{itemize}
\item an upper left block $M^{UL}$ of size $kc \times c^2$ which itself is divided into smaller blocks of size $c \times c$ which are denoted $M^{UL}_{ij}$ for $i=1,\ldots,k$, $j=1,\ldots,c$,
\item a lower left block $M^{LL}$ of size $c^2 \times c^2$ which is itself divided into smaller $c \times c$ blocks $M^{UR}_{ij}$ for $i,j=1,\ldots,c$,
\item an upper right block $M^{UR}$ of size $kc \times kc$ consisting of smaller $c \times k$ blocks $M^{UR}_{ij}$ for $i=1,\ldots,k$, $j=1,\ldots,c$, and 
\item a lower right block $M^{LR}$ of size $c^2 \times kc$ consisting of smaller $c \times k$ blocks $M^{LR}_{ij}$ for $i,j=1,\ldots,c$. 
\end{itemize}
The various sub-blocks of $M$ are derived from the matrices $A_j$ and $B_j$ as follows:
\begin{itemize}
\item Let $M^{UR}_{ij} = B_i$ if the diagonal of $M^{UR}$ passes through $M^{UR}_{ij}$ and let $M^{UR}_{ij} = 0$ otherwise. (Here the diagonal is understood as the literal diagonal of the $kc \times kc$ matrix $M^{UR}$.)
\item Let $M^{UL}_{ij} = A_i$ if $(i,j)$ is a pair for which $M^{UR}_{ij}$ lies on the diagonal of $M^{UR}$ and $M^{UL} = 0$ otherwise.  The layout of $M^{UL}$ matches the layout of $M^{UR}$ with the $B_i$ blocks replaced by $A_i$ blocks.
\item Let $M^{LL}_{ii} = A_{k+i}$ and $M^{LL}_{ij} = 0$ when $i \neq j$.
\item Let $M^{LR}_{ii} = B_{k+i}$ and $M^{LR}_{ij} = 0$ when $i \neq j$.
\end{itemize}
Figure \ref{matrixfig} illustrates the structure of this matrix $M(\pi_1,\ldots,\pi_n)$.
With the matrix $M(\pi_1,\ldots,\pi_n)$ defined, let
\begin{equation}
\Phi(\pi_1,\ldots,\pi_n) := \det M(\pi_1,\ldots,\pi_n). \label{thepoly}
\end{equation}
(To apply $\Phi$ in the case of \eqref{goal}, one need only specify that $\pi_j = D_x \rho(x,y_j)$ for each $j=1,\ldots,n$.)
Permuting the columns of $M(\pi_1,\ldots,\pi_n)$ brings it exactly into the form identified in Section \ref{blpoly}, so in particular \eqref{thepoly} defines a polynomial $\Phi$ which has the invariance property \eqref{algebra2} and is homogeneous of degree $c$ in each of the matrices $\pi_1,\ldots,\pi_n$ (so $m=n$ and $d_1,\ldots,d_n = c$ in \eqref{algebra1}).
In particular, this quantity \eqref{thepoly} will satisfy \eqref{mainlb} when $\pi_j := D_x \rho (x,y_j)$ for each $j=1,\ldots,n$.

\begin{figure}[ht]
\begin{center}
\begin{tikzpicture}[rotate=90,xscale=-0.16,yscale=-0.16]
\draw [thick] (0,0) rectangle (60,60);	

\draw [dotted] (0,25) -- (35,60);  
\draw [dotted] (0,25) -- (60,25);
\draw [dotted] (35,0) -- (35,60);

\draw (0,25) rectangle (5,32);		\node at (2.5,28.5) {$B_1$};
\draw (5,25) rectangle (10,32);		\node at (7.5,28.5) {$B_2$};
\draw (5,32) rectangle (10,39);		\node at (7.5,35.5) {$B_2$};
\draw (10,32) rectangle (15,39);		\node at (12.5,35.5) {$B_3$};
\draw (10,39) rectangle (15,46);		\node at (12.5,42.5) {$B_3$};
\draw (15,39) rectangle (20,46);		\node at (17.5,42.5) {$B_4$};
\draw (20,39) rectangle (25,46);		\node at (22.5,42.5) {$B_5$};
\draw (20,46) rectangle (25,53);		\node at (22.5,49.5) {$B_5$};
							\node at (27.5,49.5) {$\ddots$};
%\draw (25,46) rectangle (30,53);
%\draw (25,53) rectangle (30,60);
\draw (30,53) rectangle (35,60);		\node at (32.5,56.5) {$B_k$};

\draw (35,25) rectangle (40,32);		\node at (37.5,28.5) {$B_{k+1}$};
\draw (40,32) rectangle (45,39);		\node at (42.5,35.5) {$B_{k+2}$};
\draw (45,39) rectangle (50,46);		\node at (47.5,42.5) {$B_{k+3}$};
							\node at (52.5,49.5) {$\ddots$};
%\draw (50,46) rectangle (55,53);
\draw (55,53) rectangle (60,60);		\node at (57.5,56.5) {$B_{n}$};

\draw (0,0) rectangle (5,5);		\node at (2.5,2.5) {$A_1$};
\draw (5,0) rectangle (10,5);		\node at (7.5,2.5) {$A_2$};
\draw (5,5) rectangle (10,10);		\node at (7.5,7.5) {$A_2$};
\draw (10,5) rectangle (15,10);		\node at (12.5,7.5) {$A_3$};
\draw (10,10) rectangle (15,15);		\node at (12.5,12.5) {$A_3$};
\draw (15,10) rectangle (20,15);		\node at (17.5,12.5) {$A_4$};
\draw (20,10) rectangle (25,15);		\node at (22.5,12.5) {$A_5$};
\draw (20,15) rectangle (25,20);		\node at (22.5, 17.5) {$A_5$};
							\node at (27.5, 17.5) {$\ddots$};
%\draw (25,15) rectangle (30,20);	\node at (27.5, 17.5) {$A_6$};
%\draw (25,20) rectangle (30,25);	\node at (27.5, 22.5) {$A_6$};
\draw (30,20) rectangle (35,25);		\node at (32.5, 22.5) {$A_k$};

\draw (35,0) rectangle (40,5);		\node at (37.5,2.5) {$A_{k+1}$};
\draw (40,5) rectangle (45,10);		\node at (42.5,7.5) {$A_{k+2}$};
\draw (45,10) rectangle (50,15);		\node at (47.5,12.5) {$A_{k+3}$};
							\node at (52.5,17.5) {$\ddots$};
%\draw (50,15) rectangle (55,20);	\node at (52.5,17.5) {$I$};
\draw (55,20) rectangle (60,25);		\node at (57.5, 22.5) {$A_n$};
\end{tikzpicture}
\end{center}
\caption{Illustration of the structure of the matrix $M(\pi_1,\ldots,\pi_n)$. \label{matrixfig}}
\end{figure}
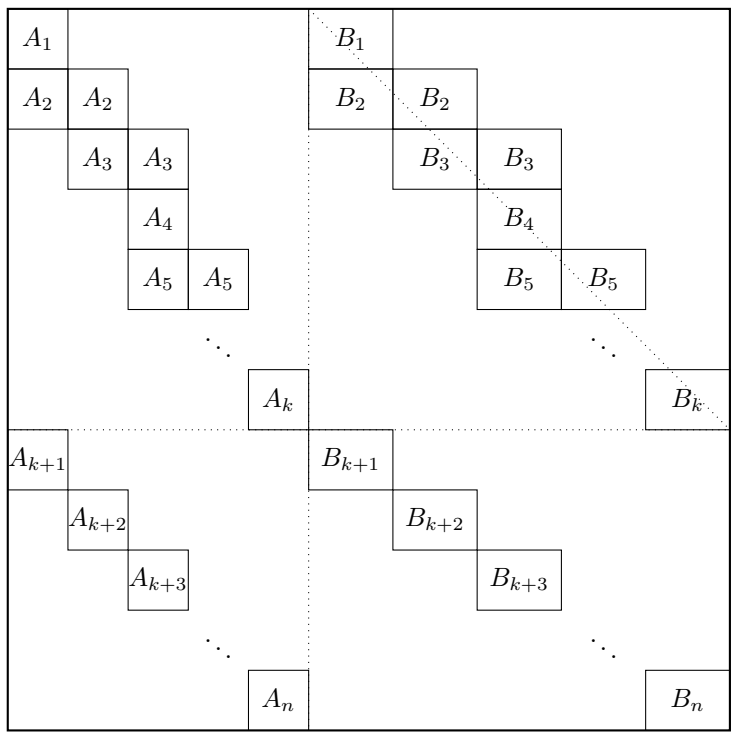

When \eqref{theld} is used for the matrices $\pi_1,\ldots,\pi_n$ as described above, it will be the case that $A_1 = \cdots = A_{n} = I_{c \times c}$ and $B_j = B (x - y_j)$ for each $j=1,\ldots,n$ with
\[ B(t) := \left[ \begin{array}{ccc} t_1 \lambda_{11} & \cdots & t_k \lambda_{1k} \\ \vdots & \ddots & \vdots \\ t_1 \lambda_{c1} & \cdots & t_k \lambda_{ck} \end{array} \right], \]
under the convention that $t = (t_1,\ldots,t_k) \in \R^{k}$ and that each $x-y_j$ is understood to be projected down to $\R^{k}$ by retaining only the first $k$ coordinates of each $x - y_j \in \R^n$.
  Restricting to the situation in which $y_{k+1} = \cdots = y_{n}$, it will be the case that $\pi_{k+1} = \cdots = \pi_{n}$. By elementary row operations and expanding the determinant of $M$, it follows that
\[ \Phi(\pi_1,\ldots,\pi_k,\pi_{n},\ldots,\pi_n) = (-1)^{c^3k} \det M^{UR} (B_1 - B_n,\ldots,B_k - B_n). \]
For convenience, define
\[ \Phi^{UR} (B_1,\ldots,B_k, B_n) := \det M^{UR} (B_1 - B_n,\ldots,B_k - B_n), \]
where by $M^{UR}(B_1 - B_n,\ldots,B_k - B_n)$, we mean simply the matrix with the same structure as $M^{UR}$ but with each $B_1,\ldots,B_k$ replaced by $B_1 - B_n,\ldots,B_k-B_n$, respectively.
On the full diagonal $y_1 = \cdots = y_n$, the matrix $M^{UR}$ will be identically zero, and so $\Phi^{UR}$ will be zero as well.

Since $B(t)$ is some $c \times k$ real matrix which depends smoothly on the parameter $t \in \R^{k}$, one can precisely understand the low-order derivatives of $\Phi^{UR}$ on the diagonal. For each $j=1,\ldots,n$, let $t^{(j)} \in \R^{k}$ denote the first $k$ coordinates of $x - y_j$. The immediate goal is to compute $\Phi^{UR}$ and its low-order derivatives at a point $t^{(1)} = \cdots = t^{(n)} = t^{(0)}$ for some fixed value of $t^{(0)}$. Since $B_{k+1} = \cdots = B_n = B(t^{(0)})$ on the diagonal,
For each index $j \in \{1,\ldots,k\}$, there is a unique collection of $c$ rows of the matrix $M^{UR}(B_1 - B_n,\ldots,B_k-B_n)$ which vanish identically when $t^{(j)} = t^{(0)}$; consequently
\[ \partial^{\alpha_1}_{t^{(1)}} \cdots \partial^{\alpha_k}_{t^{(k)}} \Phi^{UR}(B(t^{(1)}), \ldots,B(t^{(k)}), B(t^{(0)})) = 0 \]
when $t^{(1)} = \cdots = t^{(k)} = t^{(0)}$ if $|\alpha_j| < c$ for any $j = 1,\ldots,k$. This is precisely the situation anticipated by \eqref{vcond}: taking $c_1 = \cdots = c_k = c$ and $c_{k+1} = \cdots = c_n = 0$, then implies that \eqref{simpler1} holds, which then establishes that the inequality \eqref{goal} would in principle be sufficient to prove Theorem \ref{radonthm} by the application of Theorem \ref{genradonthm}.

A precise analysis of higher derivatives of $\Phi^{UR}$ on the diagonal is more delicate.  By linearity of $B$ as a function of $t$, it suffices to assume $t^{(0)} = 0$. To establish a lower bound for quantity $\omega(t)$ from \eqref{betterdens}, one may use \eqref{simpler1} and \eqref{simpler2}. After these reductions, it suffices to compute or otherwise estimate the derivatives of $\Phi^{UR}(B(t^{(1)}),\ldots,B(t^{(k)}),0)$ with respect to constant-coefficient vector fields
 $X_1,\ldots,X_k$ of the form
\[ X_j^{(i)} = \sum_{\ell = 1}^{j} c_{j\ell} \partial_{t^{(i)}_\ell}. \]
These are just the vector fields determined by $U^* \partial$ in \eqref{goal}. In particular, $X_j^{(i)}$ denotes the $j$-th operator among those defining $U^* \partial$, applied to the variable $t^{(i)}$.
Note in particular that $X_1^{(i)}$ points in the first coordinate direction in the variables $t^{(i)}$, $X_2^{(i)}$ lies in the span of the first two coordinate directions, and so on. To simplify computations, it will be assumed for the moment that the diagonal entries $c_{\ell \ell} = U_{\ell \ell}$ are all equal to $1$. It will also be useful to take the periodicity convention $X_{j + Nk}^{(i)} := X_{j}^{(i)}$ for any positive integer $N$.  
When $j$ and $j'$ are both integer subscripts of the vector fields just defined, the relation $j < j'$ will be said to hold when this inequality holds in the usual sense for the representatives of $j,j'$ taken from the interval $\{1,\ldots,k\}$ (i.e., the relation $j < j'$ will mean that the representative of $j$ which belongs to $\{1,\ldots,k\}$ is less than the corresponding representative of $j'$).

 It will be shown by induction on $\ell$ that for any $\ell \leq k$, one has
\begin{equation} \label{mainindid} \begin{split} X^{(\ell)}_{\ell c} & \cdots X^{(\ell)}_{(\ell-1)c+1} \cdots X^{(1)}_{c} \cdots X^{(1)}_{1} \Phi^{UR} (B(t^{(1)}), \ldots,B(t^{(k)}), 0 )   \\ & = \det M^{UR}_{\ell} \prod_{j=0}^{\ell-1} \det \left[ \begin{array}{ccc} \lambda_{1(jc+1)} & \cdots & \lambda_{1 (j c + c)} \\ \vdots & \ddots & \vdots \\ \lambda_{c(j c + 1)} & \cdots & \lambda_{c(jc + c)} \end{array} \right], 
\end{split}  \end{equation}
where $M^{UR}_{\ell}$ is the $(k - \ell) c \times (k-\ell)c$ lower-right minor of the matrix $M^{UR}$ and where the columns of the matrix $\lambda$ of coefficients associated to the operator \eqref{myoperator} are regarded as periodic with period $k$ just as was the case for the index $j$ of the vectors $X^{(i)}_j$.  There are two cases to consider: one case when the block $B_{\ell+1}$ appears exactly once in the matrix $M^{UR}$ (e.g., $B_1$ or $B_4$ in Figure \ref{matrixfig}) and another case when the block appears twice in $M^{UR}$ with one copy appearing immediately to the right of the other (e.g., $B_2$ or $B_3$ in Figure \ref{matrixfig}).  In the first case, the truncated matrix $M^{UR}_\ell$ has the $c \times 1$ block
\[ \left[ \begin{array}{ccc} t_{\ell c + 1}^{(\ell+1)} \lambda_{1 (\ell c +1)} & \cdots & t_{\ell c + 1} ^{(\ell+1)} \lambda_{ c (\ell c + 1)} \end{array} \right]^T \]
in its upper left-hand corner. As a function of $t^{(\ell+1)}$, the determinant $\det M^{UR}_\ell$ does not depend on $t^{(\ell+1)}_i$ for any $i < \ell c + 1$ (interpreted periodically), since all such columns of $M^{UR}$ that do depend on these variables lie outside the minor $M^{UR}_\ell$. This means that the derivative of $\det M^{UR}_\ell$ with respect to $X^{(\ell+1)}_{\ell c + 1}$ must simply equal the derivative with respect to ${t^{(\ell)}_{\ell c + 1}}$, the effect of which is to replace the upper left block in the first column with the new block
\[ \left[ \begin{array}{ccc}  \lambda_{1(\ell c + 1)} & \cdots & \lambda_{ c(\ell c + 1)} \end{array} \right]^T \]
and to replace all other entries in the first column with zeros (if they do not vanish already) because they are constant with respect to $t^{(\ell+1)}$. The argument then repeats for all the remaining derivatives $X_{\ell c + 1}^{(\ell+1)}$ through $X_{(\ell+1)c}^{(\ell+1)}$ by advancing to the second column and so on. At each stage, there is no dependence on $t^{(\ell)}$ with respect to any ``lower'' coordinate directions.  Once all derivatives of $\det M^{UR}_{\ell}$ with respect to $t^{(\ell+1)}$ have been taken, the result is that
\[ X_{\ell c + c}^{(\ell+1)} \cdots X_{\ell c + 1}^{(\ell+1)} \det M^{UR}_{\ell} \]
may be expressed as
the determinant of a matrix with a $c \times c$ minor 
in the upper-left corner equalling
\[ \left[ \begin{array}{ccc} \lambda_{1(\ell c+1)} & \cdots & \lambda_{1 (\ell c + c)} \\ \vdots & \ddots & \vdots \\ \lambda_{c(\ell c + 1)} & \cdots & \lambda_{c( \ell c + c)} \end{array} \right] \]
 and the matrix $M^{UR}_{\ell+1}$ in the lower lower right corner.

On the other hand, if the block $B_{\ell+1}$ appears twice in $M^{UR}$, then the argument above requires slight modification.  First, there must be an index $p$ in the range $\{\ell c + 1, \ldots, (\ell+1) c\}$ which is equivalent to $1$ modulo periodicity.  If any columns of the leftmost $B_{\ell+1}$ block appear in the minor $M^{UR}_{\ell}$, they must appear alone on their own column since no block in $M^{UR}$ can have neighbors both on the right and below.  This would mean that $M^{UR}_{\ell}$ has a block in the upper left hand corner with the form
\[ \left[ \begin{array}{ccc} t_{\ell c + 1}^{(\ell+1)} \lambda_{1 (\ell c + 1) } & \cdots &  t_{p-1}^{(\ell+1)} \lambda_{1 (p-1)} \\ \vdots & \ddots & \vdots \\
t_{\ell c + 1} ^{(\ell+1)} \lambda_{ c(\ell c + 1)}  & \cdots & t_{p-1} ^{(\ell+1)} \lambda_{ c (p-1) }  \end{array} \right] \]
and all other entries in these same columns must be zero.
It follows when taking the determinant of $M^{UR}_{\ell}$ that factors of $t^{(\ell+1)}_{\ell c + 1}, \ldots, t^{(\ell+1)}_{p-1}$ appearing on their own rows simply factor out by multilinearity of the determinant as a function of the columns.  Furthermore, although these same columns of the leftmost $B_{\ell+1}$ appear again in the rightmost $B_{\ell+1}$ block, elementary column operations allow one to subtract the leftmost copy of these columns from the rightmost block without changing the determinant of $M^{UR}_{\ell}$. Thus it may be assumed without loss of generality that $\det M^{UR}_{\ell}$ has no dependence on $t^{(\ell+1)}_{ \ell c + 1}, \ldots, t^{(\ell+1)}_{p-1}$ beyond the factors already obtained from the initial columns. By exactly the same argument as above, then, it follows that
\[ \begin{aligned} X^{(\ell+1)}_{(\ell+1)c} &  \cdots X^{(\ell+1)}_{p} \det M^{UR}_{\ell} \\ & = t^{(\ell-1)}_{\ell c + 1} \cdots t^{(\ell+1)}_{p-1} \det M^{UR}_{\ell+1} \det \left[ \begin{array}{ccc} \lambda_{1(\ell c + 1)} & \cdots & \lambda_{1 (\ell c + c) } \\ \vdots & \ddots & \vdots \\ \lambda_{c (\ell c + 1)} & \cdots & \lambda_{c(\ell c + c)} \end{array} \right] \end{aligned},  \]
and from this identity the desired conclusion holds after differentiating once again with respect to the remaining derivatives $X^{(\ell+1)}_{\ell c + 1}, \ldots, X^{(\ell+1)}_{p-1}$ in order just listed ($X_{(\ell c + 1)}^{(\ell+1)}$ first, etc.), once again using the fact that at every step, there is no dependence on variables from the ``lower'' coordinate directions. Finally, because the $X$ vector fields are constant-coefficient linear combinations of coordinate vector fields, we see that while the order of differentiation was extremely useful to exploit for computational purposes, it does not have an effect on the final result. Therefore in both cases we conclude that
\[ X^{(\ell+1)}_{(\ell+1)c} \cdots X^{(\ell+1)}_{\ell c + 1} \det M^{UR}_{\ell} = \det M^{UR}_{\ell+1} \det  \left[ \begin{array}{ccc} \lambda_{1(\ell c+1)} & \cdots & \lambda_{1 (\ell c + c)} \\ \vdots & \ddots & \vdots \\ \lambda_{c(\ell c + 1)} & \cdots & \lambda_{c( \ell c + c)} \end{array} \right].
 \]
Now \eqref{mainindid} with $\ell = k$ gives the final conclusion that
\begin{equation} \begin{aligned} X^{(1)}_1 & \cdots X^{(1)}_c \cdots X^{(k)}_{(k-1) c + 1} \cdots X^{(k)}_{k c} \Phi^{UR} (B(t^{(1)}), \ldots,B(t^{(k)}), 0 )   \\ & =  \prod_{j=0}^{k-1} \det \left[ \begin{array}{ccc} \lambda_{1 (jc+1)} & \cdots & \lambda_{1(j c + c) } \\ \vdots & \ddots & \vdots \\ \lambda_{c(j c + 1) } & \cdots & \lambda_{c(jc + c)} \end{array} \right]. 
\end{aligned} \label{themainid} \end{equation}

The inequality \eqref{themainid} gives exactly the desired inequality \eqref{goal}, i.e., 
\begin{align*} \max_{|\alpha_1| = \cdots = |\alpha_k| = c} & | (U^* \partial)^{\alpha_1}_1 \cdots (U^*)^{\alpha_k}_k \Phi (D_x \rho(x,y) ,\ldots,D_x \rho(x,y))| \\ & \geq \left| \prod_{j=0}^{k-1} \det \left[ \begin{array}{ccc} \lambda_{1 (jc+1)} & \cdots & \lambda_{1(j c + c) } \\ \vdots & \ddots & \vdots \\ \lambda_{c(j c + 1) } & \cdots & \lambda_{c(jc + c)} \end{array} \right] \right|  \end{align*}
under the assumption that $U_{ii} = 1$ for each $i$. When the diagonal elements of $U$ are \textit{not} all $1$; one may instead apply \eqref{themainid} by choosing
\[ X_j^{(i)} = \sum_{\ell=1}^j U_{jj}^{-1} U_{j \ell} \partial_{t_\ell^{(i)}} \]
for each $i=1,\ldots,k$ and $j=1,\ldots,k$.
Because each subscript index in the set $\{1,\ldots,k\}$ appears exactly $c$ times among the derivatives on the left-hand side of \eqref{themainid}, multiplying both sides of \eqref{themainid} by $|\det U|^c$ (which is simply the $c$-fold product of the absolute value of the diagonal elements of $U$) gives the more general inequality
\begin{align*} \max_{|\alpha_1| = \cdots = |\alpha_k| = c} & | (U^* \partial)^{\alpha_1}_1 \cdots (U^*)^{\alpha_k}_k \Phi (D_x \rho(x,y) ,\ldots,D_x \rho(x,y))| \\ & \geq |\det U|^c \left| \prod_{j=0}^{k-1} \det \left[ \begin{array}{ccc} \lambda_{1 (jc+1)} & \cdots & \lambda_{1(j c + c) } \\ \vdots & \ddots & \vdots \\ \lambda_{c(j c + 1) } & \cdots & \lambda_{c(jc + c)} \end{array} \right] \right|  \end{align*}
for arbitrary invertible upper-triangular matrix $U$; if $U$ is not invertible, the inequality just established is trivially true.  This is exactly the desired inequality \eqref{goal}.

By \eqref{simpler1} and \eqref{simpler2} (fixing $s = c$), it follows that the appropriate density $\omega(t)$ from \eqref{simpler1} is at least bounded below by a fixed implicit constant (depending only on $n$) times $K^{1/c}$, where
\[ K := \left| \prod_{j=0}^{k-1} \det \left[ \begin{array}{ccc} \lambda_{1 (jc+1)} & \cdots & \lambda_{1(j c + c) } \\ \vdots & \ddots & \vdots \\ \lambda_{c(j c + 1) } & \cdots & \lambda_{c(jc + c)} \end{array} \right] \right|. \]
By Theorem \ref{betterthm}, the measure $K^{1/c} dt$ satisfies $K^{1/c} dt \leq \omega dt$, so that
\[ \sup_{y_1,\ldots,y_m \in F \cap \li{x}} | \Phi(\{D_x \rho(x,y_j)\})| \gtrsim \left[ K^{1/c} \sigma(F \cap \li{x}) \right]^c \]
for all Borel sets $F \subset \R^{n}$. Assuming that $K > 0$, the inequality \eqref{theradonineq} must hold by Theorem \ref{genradonthm} after fixing $m = n$ and $s = n-k$. This is exactly the desired conclusion of Theorem \ref{radonthm}.

\subsection{A Generalization}

The nature of nonconcentration inequalities such as the main hypothesis \eqref{nonconcentrate} of Theorem \ref{genradonthm} is that when \eqref{nonconcentrate} can be shown to for some model operator, this can often be used to show that it must hold for some generic class of operators and that there must exist some nontrivial polynomial functions of the data which govern the sort of nondegeneracy which \eqref{nonconcentrate} implicitly requires. The following result gives such an example:
\begin{theorem}
Let $k$ and $n$ be positive integers satisfying the inequalities $k < n \leq 2k$ and let all vectors $x,y \in \R^n$ be regarded as pairs $(x',x'') \in \R^{k} \times \R^{n-k}$ and $(y',y'') \in \R^{k} \times \R^{n-k}$, respectively. There exists a nonempty collection of nontrivial polynomials $\{P_1,\ldots,P_N\}$ on the space $(\R^{k \times k})^{n-k}$ (i.e., on the space of $(n-k)$-tuples of $k \times k$ real matrices) such that the following holds: For any incidence relation $\rho$ of the form \label{quantthm}
\[ 
\rho(x,y) :=  y'' - x'' - Q(x',y')\]
where $Q : \R^{k} \times \R^{k} \rightarrow \R^{n-k}$ is a polynomial in $x'$ and $y'$, if $\Omega' \subset \R^k \times \R^k$ is an open set such that
\[ \sum_{i=1}^N |P_i ( \partial^2_{x'y'} Q(x',y')))|^2 \geq c \]
at every point $(x',y') \in \Omega'$ for some constant $c > 0$, then for any Borel set $E \subset \R^n$, the Radon-like operator
\[ T f(x) := \int_{(x',y') \in \Omega'} f(y', x'' + Q(x',y')) dy' \]
satisfies
\[ || T \chi_E ||_{L^{\frac{2n-k}{n-k}}(\R^n)} \leq C |E|^{\frac{n}{2n-k}} \]
for some $C < \infty$ independent of $E$ (where $|E|$ denotes Lebesgue measure of $E$).
\end{theorem}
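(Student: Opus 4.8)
The plan is to run the argument of Section~\ref{radonthmproof} for the general polynomial $Q$ in place of the model quadratic of \eqref{myoperator}, reading off from the computation the precise nondegeneracy of the mixed Hessian $\partial^2_{x'y'}Q$ that is actually used, and then to invoke the Geometric Invariant Theory description of the Brascamp--Lieb constant to phrase that nondegeneracy polynomially. Write $c:=n-k$. Exactly as for Theorem~\ref{radonthm}, apply Theorem~\ref{genradonthm} with $m=n$ and $s=c$; Proposition~\ref{pmeasure} identifies the measure $\sigma$ on $\li{x}$ with Lebesgue measure in the $y'$-parametrization, and since $\tfrac{n(m+s)}{(n-k)m}=\tfrac{2n-k}{n-k}$ and $\tfrac{m}{m+s}=\tfrac{n}{2n-k}$, the conclusion of Theorem~\ref{genradonthm} is exactly the asserted bound on $\|T\chi_E\|_{L^{(2n-k)/(n-k)}}$. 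It therefore suffices to verify the nonconcentration hypothesis \eqref{nonconcentrate}, that is, $\sup_{y_1,\dots,y_n\in F}[W(\{D_x\rho(x,y_j)\}_{j=1}^n)]^{1/p}\gtrsim(\sigma(F\cap\li{x}))^{c}$ uniformly in $x$ and in Borel $F\subset\li{x}$. Since $\rho(x,y)=y''-x''-Q(x',y')$ has left derivative matrix $D_x\rho(x,y)=[-I_c\mid-\partial_{x'}Q(x',y')]$ (after the harmless column reordering of \eqref{theld}), the identity-block structure used in Section~\ref{radonthmproof} is present, so the invariant polynomial $\Phi:=\det M(\pi_1,\dots,\pi_n)$ of Lemma~\ref{itsadet} --- block layout of Figure~\ref{matrixfig}, with $A_j=I_c$ and $B_j=-\partial_{x'}Q(x',y'_j)$ --- is available, and \eqref{blblock2} gives $[W(\{D_x\rho(x,y_j)\})]^{1/p}\gtrsim|\Phi(\{D_x\rho(x,y_j)\})|$.

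Next, apply Theorem~\ref{betterthm} to the polynomial $\Phi_x(y'_1,\dots,y'_n):=\Phi(\{D_x\rho(x,y_j)\}_{j=1}^n)$ in the variables $y'_j\in\R^k$ (with $x$ a parameter). On the locus $y_{k+1}=\dots=y_n$ one has, verbatim as in Section~\ref{radonthmproof}, $\Phi_x=\pm\det M^{UR}(B_1-B_n,\dots,B_k-B_n)$, and for each $j\le k$ the difference $B_j-B_n$ occupies a block-row of $c$ rows of $M^{UR}$ which vanishes identically on the diagonal $y_1=\dots=y_n$; hence the vanishing condition \eqref{vcond} holds with $c_1=\dots=c_k=c$ and $c_{k+1}=\dots=c_n=0$, and \eqref{simpler1}--\eqref{simpler2} reduce the density $\omega(t)$ of Theorem~\ref{betterthm} to a lower bound for $\inf_U\max_{|\alpha_j|=c}\bigl|(U^*\partial)^{\alpha_1}_1\cdots(U^*\partial)^{\alpha_k}_k\Phi_x(t,\dots,t)\bigr|^{1/c}/|\det U|$ over upper-triangular $U$. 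The key observation is that, by multilinearity of $\det M^{UR}$ in the $c$ rows of each block-row together with the fact that any such row left undifferentiated remains zero on the diagonal, the only surviving terms of this $(c,\dots,c)$-order derivative are those in which each derivative in $y'_j$ lands on a distinct row of block-row $j$; since the first differential of $(B_j-B_n)_{a\ell}$ at the diagonal is a fixed linear function of $y'_j-\bar y'$ whose coefficients are the entries of $\partial^2_{x'y'}Q(x',\bar y')$, it follows that this derivative equals a \emph{universal} polynomial $\Psi_{U,\alpha}\bigl(\partial^2_{x'y'}Q(x',\bar y')\bigr)$ in the entries of the mixed Hessian, with \emph{no higher derivatives of $Q$ entering}.

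It then remains to show that the resulting function $\widetilde\omega(H):=\inf_U\max_\alpha|\Psi_{U,\alpha}(H)|^{1/c}/|\det U|$ on the space $(\R^{k\times k})^{n-k}$ of mixed Hessians is bounded below, away from a proper algebraic subvariety, by a polynomial quantity in $H$. Here I would observe that $\det M^{UR}$ evaluated on the linearized block-rows is again of the block-determinant form supplied by Lemma~\ref{itsadet}, so that $\widetilde\omega(H)$ is comparable to the reciprocal Brascamp--Lieb constant of the \emph{linearized} Brascamp--Lieb data built from $H$; Lemma~\ref{bl2polylem} (with Lemma~\ref{bl2mv}) then produces a \emph{finite} collection $P_1,\dots,P_N$ of invariant polynomials in the entries of $H$ with $\widetilde\omega(H)\approx\bigl(\sup_i|P_i(H)|\bigr)^{1/c}$ up to harmless normalizing constants. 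These $P_i$ are not all identically zero, because the explicit identities \eqref{mainindid}--\eqref{themainid} of Section~\ref{radonthmproof} exhibit a value of $H$ --- the model value, with $\partial^2_{x'y'}Q$ corresponding to $\operatorname{diag}(-\lambda_{a1},\dots,-\lambda_{ak})$ for $\lambda$ having nonvanishing consecutive minors --- at which $\widetilde\omega(H)>0$. Taking $\{P_1,\dots,P_N\}$ to be this collection, the hypothesis that $\sum_i|P_i(\partial^2_{x'y'}Q(x',y'))|^2$ is bounded below by a positive constant on $\Omega'$ forces $\omega(t)\gtrsim 1$ there, uniformly in $x$; Theorem~\ref{betterthm} then gives $\sup_{y_j\in F}|\Phi(\{D_x\rho(x,y_j)\})|\gtrsim(\sigma(F\cap\li{x}))^{c}$, which together with \eqref{blblock2} yields \eqref{nonconcentrate} with $s=c$ and hence, by Theorem~\ref{genradonthm}, the claimed $L^{(2n-k)/(n-k)}$ bound.

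The step I expect to be the main obstacle is the last one. The infimum over $U$ --- equivalently the infimum over multisystems that is concealed inside the density $\omega$ of Theorem~\ref{betterthm} --- ranges over a non-compact group, so one cannot simply specialize $U$ to the identity, as one effectively could in the model case where the computation \eqref{themainid} turned out to be independent of the off-diagonal part of $U$. It is precisely to control this infimum that one must recognize $\widetilde\omega(H)$ as a reciprocal Brascamp--Lieb constant and appeal to the GIT machinery of Lemmas~\ref{bl2mv} and~\ref{bl2polylem}, which is what converts a non-compact optimization into the nonvanishing of finitely many polynomials. A secondary technical nuisance is the bookkeeping in the case where a block $B_j$ appears twice, in two adjacent block-columns of $M^{UR}$; this is handled just as the corresponding case in Section~\ref{radonthmproof}, using elementary column operations to remove the duplicated columns before differentiating.
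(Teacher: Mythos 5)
Your proposal is correct and follows essentially the same route as the paper: reduce to the nonconcentration hypothesis \eqref{nonconcentrate} via Theorem \ref{genradonthm} with $m=n$ and $s=n-k$, observe that the diagonal derivatives of the block determinant $\Phi$ of order exactly $n-k$ in each $y_j'$ depend only on the mixed Hessian $\partial^2_{x'y'}Q$, bound the infimum over $T$ from below by $\SL{}$-invariant polynomials of these derivatives, and use the model case of Theorem \ref{radonthm} to see that the relevant invariant algebra is nontrivial. The only divergence is your claimed two-sided comparability of $\widetilde\omega(H)$ with a reciprocal Brascamp--Lieb constant of ``linearized data,'' which the paper neither asserts nor needs --- the one-directional lower bound by invariants (the scaling argument behind \eqref{blblock} and \eqref{simpler1}) together with the Hilbert--Mumford nullcone argument for nontriviality is all that is required.
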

\begin{proof}
As noted above, let $x = (x',x'') \in \R^{k} \times \R^{n-k}$ and similarly for $y$. 
Consider the Radon-like operator parametrized by $y = x + (t,Q(x',x'+t))$ for $t \in \R^{k}$, which has defining function $\rho(x,y) := y'' - x'' - Q(x',y')$ as noted in the statement of the theorem. Using Theorem \ref{betterthm} and 
following the same initial derivation as in the proof of Theorem \ref{radonthm}, to verify the main hypothesis of Theorem \ref{genradonthm}, it suffices to show that
\[ \begin{aligned} \Phi & (\{ D_x  \rho(x,y_j)\}_{j=1}^n) \\ & = \det M^{UR}( D_{x'} Q(x',y'_1) - D_{x'} Q(x',y'),\ldots, D_{x'} Q(x',x'+y'_k - D_{x'} Q(x',y')) \end{aligned} \]
has the property that
\begin{equation} \max_{T \in \GL{n-k}} \max_{|\alpha_1| = \cdots = |\alpha_k| = n-k} \frac{| (T^* \partial')_1^{\alpha_1} \cdots (T^* \partial')_k^{\alpha_k} \Phi  (\{ D_x  \rho(x,y)\}_{j=1}^n)|^{\frac{1}{n-k}}}{|\det T|} \label{theq} \end{equation}
is uniformly bounded below for all $(x',y') \in \Omega' \subset \R^{k \times k}$, where $\partial'$ represents the partial derivatives with respect to the single-primed $y'$-variables. As before, note once again that $|\alpha_j| < n-k$ for some $j \in \{1,\ldots,k\}$, the matrix $M^{UR}$ will have a row which is identically zero when it is evaluated on the diagonal $y_1 = \cdots = y_n = y$; when $|\alpha_j| = n-k$ for each $j \in \{1,\ldots,n-k\}$, the resulting derivative $(T^* \partial')_1^{\alpha_1} \cdots (T^* \partial')_k^{\alpha_k} \Phi  (\{ D_x  \rho(x,y)\}_{j=1}^n)$ is expressible on the diagonal as a polynomial function of $\partial^2_{x'y'} Q$ simply because each derivative must fall on a distinct row of $M^{UR}$ for the determinant to be nonzero, which means that no higher-order derivatives in $y'$ occur in nonzero terms.  If $R$ is any polynomial function of the quantities
\[ \{ {\partial_1'}^{\alpha_1} \cdots {\partial_k'}^{\alpha_k} \Phi(\{D_x \rho(x,y)\}) \}_{|\alpha_1| = \cdots = |\alpha_k| = m}  \]
which is invariant under the natural action of $T \in \SL{n-k}$, then just as in the proof of Theorem \ref{radonthm}, it must be the case that
\begin{align*} \max_{T \in \GL{n-k}}&  \max_{|\alpha_1| = \cdots = |\alpha_k| = n-k} \frac{| (T^* \partial')_1^{\alpha_1} \cdots (T^* \partial')_k^{\alpha_k} \Phi  (\{ D_x  \rho(x,y)\}_{j=1}^n)|^{\frac{1}{n-k}}}{|\det T|} \\ & \gtrsim |R(  \{ {\partial_1'}^{\alpha_1} \cdots {\partial_k'}^{\alpha_k} \Phi(\{D_x \rho(x,y)\}) \}_{|\alpha_1| = \cdots = |\alpha_k| = m} ) |^{\frac{1}{n-k}} 
\end{align*}
for some implicit constant that depends only on $n$, $k$, and $R$.  Because we know that the quantity \eqref{theq} on the left-hand side is nonzero for \textit{some} choice of $\rho$ (namely, the case established by Theorem \ref{radonthm}), this guarantees that it is possible to find a nontrivial invariant polynomial $R$ because the null cone of the $\SL{n-k}$ representation associated to \eqref{theq} does not trivially contain all vectors. Taking $P ( \partial^2_{x'y'} Q(x',y')))$ to equal $R(  \{ {\partial_1'}^{\alpha_1} \cdots {\partial_k'}^{\alpha_k} \Phi(\{D_x \rho(x,y)\}) \}_{|\alpha_1| = \cdots = |\alpha_k| = m} ) $ for all possible nontrivial $R$ establishes the conclusion of this theorem.
\end{proof}

\subsection{Maximal codimension}
\label{radonmaxsec}

The final application of Theorem \ref{genradonthm} is to establish boundedness of certain non-translation-invariant quadratic model operators which have the maximum possible codimension for the given dimension. When the dimension of the underlying submanifold is $k$, the codimension cannot exceed $k^2$, which is simply equal to the number of mixed partial derivatives $\partial^2_{x'y'}$.  

Let $x := (x',x'')$ for $x' \in \R^{k}$ and $x'' \in \R^{k^2}$.  For convenience, $x_i'$ will denote the coordinates of $x'$ in the standard basis and $x''_{ij}$ will be the coordinates of $x''$, where $i,j$ range over $\{1,\ldots,k\}$. The operator which will be studied here is given by the definition
\begin{equation} T f(x) := \int_{\R^k} f( x' + t, \{x_{ij} + x_i' (t_j + x_j) \}_{i,j =1}^n ) dt \label{thecodimop} \end{equation}
for all measurable functions on $\R^{k} \times \R^{k^2}$. The associated defining function $\rho(x,y)$ maps into $\R^{k^2}$ and has
\[\rho(x,y) :=  -y'' + x'' + \{ x_i' y_j' \}_{i,j=1}^{k}. \]
\begin{theorem}
The Radon-like operator given by \eqref{thecodimop} is of restricted strong type $(\frac{2k+1}{k+1}, \frac{2k+1}{k})$.
\end{theorem}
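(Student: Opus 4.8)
The plan is to deduce the claim from Theorem \ref{genradonthm} in the manner of the proof of Theorem \ref{radonthm}, with $m = n = k + k^2$ and $s = n - k = k^2$; for these values $\tfrac{m+s}{m} = \tfrac{2k+1}{k+1}$ and $\tfrac{n(m+s)}{(n-k)m} = \tfrac{2k+1}{k}$, so \eqref{theradonineq} is exactly the asserted restricted strong type inequality and everything comes down to the hypothesis \eqref{nonconcentrate}. First I would record the Brascamp--Lieb data: writing $x = (x',x'') \in \R^{k}\times\R^{k^2}$ and ordering the entries of $x$ so that those of $x''$ precede those of $x'$, a direct computation from $\rho(x,y) = -y'' + x'' + \{x_i' y_j'\}_{i,j=1}^{k}$ gives
\[ D_x\rho(x,y) = \bigl[\,I_{k^2}\ \big|\ I_k\otimes y'\,\bigr], \]
where the rectangular $k^2\times k$ block $I_k\otimes y'$ has $((a,b),c)$-entry $\delta_{ac}\,y'_b$; in particular $D_x\rho$ is independent of $x$ and depends on $y$ only through $y'$. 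Proposition \ref{pmeasure} (with $\gamma(t,x) = \{x_i'(t_j + x_j')\}$) identifies $T$ with \eqref{theradondef} for this $\rho$ and gives $d\sigma = dt$. Since $I_k\otimes(y'+a) = (I_k\otimes y') + (I_k\otimes a)$, the map $D_x\rho$ at $y'+a$ equals $D_x\rho$ at $y'$ post-composed with the determinant-one shear $(v',v'')\mapsto(v',v''+(I_k\otimes a)v')$; as \eqref{datadef} is invariant under right composition of all the $\pi_j$ with one common determinant-one map, $\blw(\{D_x\rho(x,y_j)\}_j)$ is unchanged if every $y_j'$ is translated by a common vector. Hence \eqref{nonconcentrate} reduces to showing
\[ \sup_{t_1,\dots,t_n\in F}\bigl[\blw(\{\pi(t_j)\}_{j=1}^n)\bigr]^{1/p}\ \gtrsim\ |F|^{k^2}\qquad\text{for all bounded Borel }F\subset\R^k, \]
where $\pi(t) := \bigl[\,I_{k^2}\ \big|\ I_k\otimes t\,\bigr]$ and $1/p = k^2$.

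As in Theorem \ref{radonthm}, the route to this is to exhibit (via Lemma \ref{itsadet}, using Lemma \ref{bl2polylem}) a single nonzero invariant polynomial $\Phi$ of $\pi_1,\dots,\pi_n$ of the block-determinant form, homogeneous of degree $d = k^2$ in each $\pi_j$ and satisfying \eqref{algebra1}, \eqref{algebra2}; then \eqref{blblock2} (with exponent $(n-k)/d = 1$) gives $[\blw(\{\pi(t_j)\})]^{1/p}\gtrsim|\Phi(\pi(t_1),\dots,\pi(t_n))|$, and it remains to bound $\sup_{t_j\in F}|\Phi(\pi(t_j))|$ below by $|F|^{k^2}$ via Theorem \ref{betterthm} and the reductions \eqref{simpler1}--\eqref{simpler2}. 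The main obstacle --- and the respect in which this case is genuinely harder than Theorem \ref{radonthm} --- is the construction of $\Phi$. The varying block $B_j := I_k\otimes y_j'$ of $\pi_j$ has rank only $k$, whereas the codimension here is $n-k = k^2$, which exceeds $k$ as soon as $k\ge 2$; consequently the matrix $M$ used for \eqref{myoperator}, in which an entire block-row of height $n-k$ is occupied by copies of a single varying block (so that, after the row operations used there, it consists of copies of $I_k\otimes(y_j' - y_n')$), is rank-deficient and has determinant identically zero, and the same degeneracy persists on the diagonal after differentiation. One must instead distribute the $\pi_j$ among the block rows and columns so that the full-rank blocks $A_j := I_{k^2}$ are used to keep every block-row of rank $n-k$, while still arranging --- after elementary row and column operations --- that the $y_j'$-dependence sits in positions forcing $\Phi$ to vanish on the diagonal $y_1 = \dots = y_n$ to order exactly $c_j$ in the $j$-th slot for $j = 1,\dots,k$ (and $c_{k+1} = \dots = c_n = 0$), with $c_1 = \dots = c_k = n-k$. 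Equivalently, one must pin down the ``matrix Vandermonde''-type invariant of $y_1',\dots,y_n'$ that governs finiteness of the Brascamp--Lieb constant for $\{\pi(y_j')\}$; Lemma \ref{bl2polylem} guarantees that such a nonzero invariant exists (the constant being finite for generic configurations), but producing it explicitly enough to read off its diagonal jet is the heart of the argument.

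Granting such a $\Phi$, the remainder follows the template of Theorem \ref{radonthm} essentially verbatim. Regarding $\Phi(\pi(t_1),\dots,\pi(t_n))$ as a polynomial of $t_1,\dots,t_n\in\R^k$, Theorem \ref{betterthm} together with \eqref{simpler1}--\eqref{simpler2} reduces matters to producing a constant $K>0$ (depending only on $k$) with
\[ \max_{|\alpha_1| = \dots = |\alpha_k| = n-k}\bigl|(U^*\partial)^{\alpha_1}_1\cdots(U^*\partial)^{\alpha_k}_k\,\Phi(\pi(t),\dots,\pi(t))\bigr|\ \geq\ |\det U|^{\,n-k}\,K \]
for every upper-triangular $U\in\GL{k}$, the operators $(U^*\partial)^{\alpha_j}_j$ acting in the $t_j$-variables before restriction to the diagonal. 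For $U$ with unit diagonal one takes each $\alpha_j = (k,k,\dots,k)$ (so $|\alpha_j| = k^2 = n-k$), for which $\partial^{\alpha_j}_{t_j}$ exactly ``fills in'' the varying block of the $j$-th block-row of the reduced matrix --- each of its $k^2$ rows, being of the form $(t_j)_b\,e_a^T$, is replaced by the constant row $e_a^T$ after the single differentiation $\partial_{(t_j)_b}$ that reaches it --- and evaluating the resulting determinant on the diagonal then reduces to a single explicit arithmetic verification that it is nonzero, this being the role played by the product of $\lambda$-minors on the right side of \eqref{themainid} and, since \eqref{thecodimop} carries no free parameters, making $K$ a fixed positive number. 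The passage to arbitrary invertible upper-triangular $U$ is then standard: replace each $\partial$ by $\sum_{\ell\le j}U_{jj}^{-1}U_{j\ell}\partial_{t_\ell}$, discard the resulting lower-order operators using \eqref{vcond}, and multiply through by $|\det U|^{\,n-k}$ (since each coordinate index of $\{1,\dots,k\}$ is differentiated exactly $n-k$ times), the inequality being trivial when $U$ is singular. With $K$ established, \eqref{simpler1}--\eqref{simpler2} give $\omega(t)\gtrsim K^{1/(n-k)}$ uniformly, so Theorem \ref{betterthm} applied to a suitable constant multiple of Lebesgue measure yields $\sup_{t_j\in F}|\Phi(\pi(t_j))|\gtrsim|F|^{k^2}$, hence \eqref{nonconcentrate}, and Theorem \ref{genradonthm} with $m = n$ and $s = k^2$ gives the restricted strong type $\bigl(\tfrac{2k+1}{k+1},\tfrac{2k+1}{k}\bigr)$ inequality.
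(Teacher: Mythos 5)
Your reduction to Theorem \ref{genradonthm} is set up consistently --- with $m = n = k+k^2$ and $s = k^2$ the exponents in \eqref{theradonineq} do come out to $(\tfrac{2k+1}{k+1},\tfrac{2k+1}{k})$, and your observations about the structure of $D_x\rho$, the translation invariance of $\blw$, and the degeneracy of the na\"ive block determinant are all correct. But the proof has a genuine gap at exactly the point you flag: you never construct the invariant polynomial $\Phi$, and everything after ``Granting such a $\Phi$'' is conditional on an object whose existence and diagonal jet you describe as ``the heart of the argument.'' Appealing to Lemma \ref{bl2polylem} for abstract existence does not suffice either, since you would still need to verify that $\mathrm{BL}(\{\pi(t_j)\}_{j=1}^{n})$ is finite for generic $t_1,\dots,t_n$ with these particular rank-deficient varying blocks, and more importantly you would need quantitative control of the diagonal vanishing order of the (unknown) polynomial to run Theorem \ref{betterthm} with $s = k^2$. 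As written, the argument does not close.

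The source of the difficulty is your insistence on $m = n$, inherited from the proof of Theorem \ref{radonthm}; Theorem \ref{genradonthm} does not require this, and the paper instead takes $m = k+1$ and $s = k$ (note $s/m = k/(k+1)$ is the same ratio, so the exponents are unchanged). With only $k+1$ maps one can use the completely elementary block determinant with $D_x\rho(x,y_j)$ on the diagonal for $j=1,\dots,k$ and copies of $D_x\rho(x,y_{k+1})$ along the bottom block row: each block row of height $n-k = k^2$ contains a full copy of $D_x\rho$, whose $I_{k^2}$ block keeps it full rank, so the degeneracy you worried about never arises. Row reduction and expansion in the identity columns collapse this determinant to $\bigl|\det[\,y_{k+1}-y_1\ \cdots\ y_{k+1}-y_k\,]\bigr|^{k}$, and the nonconcentration estimate $\sup_{y_j\in F}|\det[\cdots]|\gtrsim|F|$ for the multilinear determinant functional is already known from \cite{gressman2018}, so no computation of a diagonal jet via Theorem \ref{betterthm} is needed at all. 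I'd encourage you to revisit which value of $m$ is forced by the problem: here the codimension $k^2$ is so large relative to $k$ that a small number of fibers already saturates the transversality, and choosing $m$ accordingly turns the hard step of your outline into a two-line determinant identity.
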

\begin{proof}
The matrix $D_x \rho(x,y)$ consists of two blocks: one $k^2 \times k$ block on the left and a $k^2 \times k^2$ block on the right which simply equals the $k^2 \times k^2$ identity matrix. The block on the left can itself be understood as composed of $k \times 1$ sub-blocks which equal $y'$ (interpreted as a column matrix) along the block diagonal and $0$ elsewhere, i.e., in row $(i,j)$ and column $\ell$, the entry of this matrix is $y'_{j} \delta_{i,\ell}$ with $\delta$ being the Kronecker $\delta$. The simplest invariant polynomial which may be used to estimate the Brascamp-Lieb weight is the following:
\begin{align*} \Phi(\{D_x & \rho(x,y_j)\}_{j=1}^{k+1}) \\ &  := \det \left[ \begin{array}{cccc} D_x \rho(x,y_1) & 0 & \cdots & 0 \\ 0 & D_x \rho(x,y_2) & \ddots & \vdots \\
\vdots & \ddots & \ddots & 0 \\ 0 & \cdots & 0 & D_x \rho(x,y_k) \\ D_x \rho(x,y_{k+1}) & \cdots & \cdots & D_x \rho(x,y_{k+1}) \end{array} \right]. 
\end{align*}
To compute this determinant, subtract one copy of each of the upper block rows from the bottom block row and expand the determinant in those columns corresponding to the $k^2 \times k^2$ identity blocks of $D_x \rho(x,y_1),\ldots,D_x \rho(x,y_k)$; since there are now no nonzero entries in these columns in the final block row, the expansion is trivial and one concludes that, up to a possible factor of $\pm 1$, the determinant equals
\[ \det \left[ \begin{array}{cccccccc}
y_{k+1} - y_1 & 0 & \cdots & 0 & y_{k+1} - y_k & 0 & \cdots & 0 \\
0 & \ddots & \ddots & \ddots & \ddots  & \ddots & \ddots & \vdots \\
\vdots & \ddots & \ddots & \ddots & \ddots & \ddots & \ddots & 0 \\
0 & \cdots & 0 & y_{k+1} - y_1 & 0 & \cdots & 0 & y_{k+1} - y_k
\end{array} \right] \]
where each $y_{k+1} - y_j$ is understood as a $k \times 1$ block, as is each $0$. Rearranging columns, this matrix can itself be brought into block form, and consequently
\[ | \Phi(\{D_x \rho(x,y_j)\}_{j=1}^{k+1})| = \left| \det \left[ \begin{array}{ccc} y_{k+1} - y_1 & \cdots & y_{k+1} - y_k \end{array} \right] \right|^k. \]
Up to the factor of $k$, this $\Phi$ corresponds to the case of a multilinear determinant functional, which has been studied in a variety of contexts \cite{gressman2010}.  In particular, it is known (see \cite{gressman2018}) that
\[ \sup_{y_1,\ldots,y_{k+1} \in F} |\det \left[ \begin{array}{ccc} y_{k+1} - y_1 & \cdots & y_{k+1} - y_k \end{array} \right]| \gtrsim |F| \]
for any Borel set $F \in \R^k$, so it follows that
\[ \sup_{y_1,\ldots,y_{k+1} \in F \cap \li{x} } | \Phi(\{D_x \rho(x,y_j)\})| \gtrsim |\sigma(\li{x} \cap F)|^{k}. \]
By \eqref{blblock2} with $m = k+1$, $n = k(k+1)$ and $d_1 = \cdots = d_{k+1} = k^2$ (one can see that the exponent is $k^2$ by using multilinearity of the determinant defining $\Phi$ as a function of its rows), it follows that that $|\blw(\{D_x \rho(x,y_j)\}_{j=1}^{k+1}| \gtrsim |\Phi(\{D_x \rho(x,y_j)\}_{j=1}^{k+1})|$, so Theorem \ref{genradonthm} applies when $s = k$ to give that
\[ || T \chi_E ||_{L^{\frac{2k+1}{k}}(\R^{k} \times \R^{k^2})} \lesssim |E|^{\frac{k+1}{2k+1}} \]
for all Borel $E \subset \R^{k} \times \R^{k^2}$.
\end{proof}

\section{Appendix}
\label{appendix}

This Appendix contains the proof of Lemma \ref{specialdf}, which establishes the existence of a ``normalized'' defining function which satisfies a number of desirable properties. Lemma \ref{specialdf} was used in Section \ref{lastksec} to complete the proof of Theorem \ref{kakeyathm}. The proof of Lemma \ref{specialdf} is essentially a consequence of a quantitative version of the Implicit Function Theorem.

To simplify matters somewhat, it is useful to adopt some additional notation. For any $x \in \R^n$ and any $r > 0$, let $Q_{x,r} := x + (-r,r)^n$.vFix $| \cdot |$ to be the $\ell^\infty$ norm $\R^n$ in the standard coordinates and further fix $|| \cdot ||$ to be the $\ell^\infty \rightarrow \ell^\infty$ operator norm on matrices in $\R^{n \times n}$. There is no intrinsic reason why such a choice is required, but having norm balls equal to product boxes makes the application of these results somewhat simpler.

\begin{proposition}
Let $\Phi$ be an everywhere differentiable map from the ball $Q_{x_0,r}$ into $\R^{n-k}$, where $0 \leq k < n$. Let $D \Phi_x$ be the $(n-k) \times n$ derivative matrix of $\Phi$ at $x$ and let $R$ be an $n \times (n-k)$ matrix such that \label{ift}
\[ \sup_{x \in Q_{x_0,r}}  ||  D \Phi_x R  - I || \leq  c < 1. \]
If $|\Phi(x_0)| < r ||R||^{-1} (1-c)$, there exists some $u \in \R^n$ such that the point $x = x_0 + R u$ satisfies $x \in Q_{x_0,r}$, $\Phi(x) = 0$, and $| x - x_0 | \leq ||R|| (1-c)^{-1} | \Phi(x_0)|$. 
\end{proposition}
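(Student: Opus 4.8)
The plan is to solve the equation $\Phi(x_0 + Ru) = 0$ for $u \in \R^{n-k}$ by a Banach fixed-point (contraction mapping) argument — the classical proof of the Implicit Function Theorem, but carrying the explicit constants. First I would introduce the map $T(u) := u - \Phi(x_0 + Ru)$, defined on the open convex set $\mathcal U := \{u \in \R^{n-k} : x_0 + Ru \in Q_{x_0,r}\}$; its fixed points correspond precisely to the points $x = x_0 + Ru$ with $\Phi(x) = 0$. Differentiating, $DT_u = I - D\Phi_{x_0+Ru}\, R$, so the hypothesis gives $\|DT_u\| \le c < 1$ at every point of $\mathcal U$. By the mean value inequality along line segments — valid for a merely differentiable map, not only a $C^1$ one — the map $T$ is therefore $c$-Lipschitz on any convex subset of $\mathcal U$, distances being measured in the $\ell^\infty$ norm $|\cdot|$ (used on $\R^{n-k}$ as well, so that $\|D\Phi_x R - I\|$ and $\|R\|$ are the corresponding operator norms).

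Next I would pin down the ball on which to run the contraction. Set $\rho_0 := |\Phi(x_0)|$ and $B := \{u \in \R^{n-k} : |u| \le \rho_0/(1-c)\}$. The first key estimate is that $B \subset \mathcal U$: for $u \in B$ one has $|Ru| \le \|R\|\,|u| \le \|R\|\rho_0/(1-c)$, and the hypothesis $\rho_0 < r\|R\|^{-1}(1-c)$ makes this \emph{strictly} less than $r$, so $x_0 + Ru \in Q_{x_0,r}$. The second key estimate is that $T$ maps $B$ into itself: since $T(0) = -\Phi(x_0)$ has norm $\rho_0$, for $u \in B$ one gets $|T(u)| \le |T(u) - T(0)| + |T(0)| \le c|u| + \rho_0 \le c\rho_0/(1-c) + \rho_0 = \rho_0/(1-c)$, so $T(u) \in B$.

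Then I would invoke the Banach fixed-point theorem: $B$ is a complete metric space, and $T : B \to B$ is a $c$-contraction, so there is a (unique) $u_* \in B$ with $T(u_*) = u_*$, i.e. $\Phi(x_0 + R u_*) = 0$. Setting $x := x_0 + R u_*$, the inclusion $x \in Q_{x_0,r}$ is immediate from the first key estimate, and $|x - x_0| = |R u_*| \le \|R\|\,|u_*| \le \|R\|(1-c)^{-1}\rho_0 = \|R\|(1-c)^{-1}|\Phi(x_0)|$, which is the asserted bound. The degenerate cases are absorbed automatically: when $n = k$ the statement is vacuous, and when $\Phi(x_0) = 0$ one simply takes $u_* = 0$.

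The only genuinely delicate points are the bookkeeping of norms (the $\ell^\infty$ norm on $\R^n$, the matching $\ell^\infty$ norm on $\R^{n-k}$, and the induced operator norms, so that everything in the hypothesis parses as intended) and the inclusion $B \subset \mathcal U$ — this is the one place where the quantitative hypothesis on $|\Phi(x_0)|$ is used, and it is exactly what guarantees that the contraction iterates never leave the region where $\Phi$ is defined and differentiable. I expect that inclusion to be the main (if minor) obstacle; the appeal to the mean value inequality for a differentiable-but-not-$C^1$ map is standard but worth an explicit sentence.
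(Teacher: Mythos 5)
Your proposal is correct and is essentially the paper's own argument in different packaging: the paper runs the explicit iteration $x_{j+1} := x_j - R\,\Phi(x_j)$ (which is precisely the Picard iteration for your map $T$) and verifies the geometric decay and the containment in $Q_{x_0,r}$ by induction, using the same dual-functional Mean Value Theorem trick you invoke for the $c$-Lipschitz bound. Your appeal to the Banach fixed-point theorem on the closed ball of radius $|\Phi(x_0)|/(1-c)$ replaces the paper's hand-rolled induction, and the two key quantitative estimates (the self-mapping bound and the strict inclusion of that ball in the region where $\Phi$ is defined) match the paper's exactly.
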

\begin{proof}
The point $x$ will be the limit of the sequence given by 
\[ x_{j+1} := x_j - R   \Phi(x_j) \]
for all $j \geq 0$.  By assumption,  $| \Phi(x_0) | < r ||R||^{-1} (1-c)$. Suppose that for some value of the index $j$, it is known that the following inequalities hold:
\[
\begin{aligned}
| \Phi(x_j)| & \leq c^j | \Phi(x_0)|, \\
|x_j - x_{0}| & \leq ||R|| \frac{1 -c^j}{1-c} |\Phi(x_0)| < r (1-c^j).
\end{aligned}
\] 
By definition of $x_{j+1}$ and the above inequality for $|\Phi(x_j)|$, 
\begin{equation} |x_{j+1} - x_j| \leq ||R|| c^{j} |\Phi(x_0)|  \label{makescauchy} \end{equation}
which gives that
\[ |x_{j+1} - x_0| \leq |x_{j} - x_0| + ||R|| c^{j} |\Phi(x_0)| \leq ||R|| \frac{1-c^{j+1}}{1-c} | \Phi(x_0) | < r (1 - c^{j+1}).\]
One consequence of this inequality is that the line segment joining $x_j$ and $x_{j+1}$ belongs to $Q_{x_0,r}$. Consequently, the function 
\[ t \mapsto \Phi( x_j - t R \Phi(x_j) ) \]
is well-defined and differentiable for all $t$ in some open interval containing $[0,1]$. By the chain rule and the Mean Value Theorem, for any $z \in \R^n$, there is some $t \in [0,1]$ such that 
\[ \ang{z,  - D \Phi_{x_j - t R \Phi(x_j)} R \Phi(x_j) } = \ang{z,    \Phi(x_{j+1}) - \Phi(x_j)} , \]
where $\ang{ \cdot, \cdot}$ is the usual inner product in standard coordinates.
For convenience, let $x_* := x_j - t R  \Phi(x_j)$. Rearranging terms in the above expression yields
\[  \ang{ z ,  \Phi(x_{j+1}) } = -\ang{ z  , (D \Phi_{x_*} R - I ) \Phi(x_j) }. \]
Taking absolute values and a supremum over all $z$ with coordinates whose magnitudes sum to $1$ and applying the main hypothesis of this proposition gives that $| \Phi(x_{j+1}) | \leq c | \Phi(x_j)|$, which implies that the induction hypotheses continue to hold when the index $j$ is replaced by $j+1$. By \eqref{makescauchy}, the sequence $\{x_j\}$ must be Cauchy; by continuity of $\Phi$, defining $x := \lim_{j \rightarrow \infty} x_j$ gives that $\Phi(x) = \lim_{j \rightarrow \infty} \Phi(x_j) = 0$. The definition of the sequence and continuity of matrix multiplication gives that $x - x_0 = R u $ for some $u \in \R^n$, and the limit of the induction hypotheses gives that  $|x - x_0| \leq ||R|| (1-c)^{-1} | \Phi(x_0)| < r$. 
\end{proof}

\begin{proposition}
Let $\Phi$, $R$, $x_0$, and $r$ be as in Proposition \ref{ift} and suppose $k > 0$.  Let $V$ be the orthogonal complement of the image space of $R$ and suppose  \label{ift2}
\[ \mathop{\sup_{x \in Q_{x_0,r}}}_{|v| \leq 1, \ v \in V}  | D \Phi_x v | \leq C. \]
If $| \Phi(x_0)| < \frac{r}{3} ||R||^{-1} (1-c)$, then 
\[ \mathcal H^{k} ( \set{ x \in Q_{x_0,r} }{ \Phi(x) = 0} ) \geq c_{n} r^k \left( \min\left\{ \frac{1}{2}, \frac{1-c}{6 C ||R||} \right\} \right)^k \]
for some constant $c_n > 0$ that depends only on $n$.
\end{proposition}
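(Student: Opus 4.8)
The plan is to produce a $k$-dimensional sheet inside the zero set $\{\Phi=0\}\cap Q_{x_0,r}$ which is a graph over the $k$-plane $V$, and to estimate its $\mathcal H^k$ measure from below by projecting it onto $V$. The first step is to locate a base point of the zero set near $x_0$: since $|\Phi(x_0)| < \tfrac{r}{3}||R||^{-1}(1-c) < r||R||^{-1}(1-c)$, Proposition \ref{ift} applied to $\Phi$ on $Q_{x_0,r}$ produces a point $x_1 = x_0 + Ru$ with $\Phi(x_1)=0$, $x_1 \in Q_{x_0,r}$, and $|x_1 - x_0| \le ||R||(1-c)^{-1}|\Phi(x_0)| < r/3$.

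Next, set $\delta := \min\{\, r/3,\ \tfrac{r(1-c)}{3C||R||}\,\}$ and, for each $w \in V$ with $|w| < \delta$, consider the shifted center $x_1+w$. The segment $s \mapsto x_1 + sw$, $s\in[0,1]$, lies in $Q_{x_0,r}$ because $|x_1 + sw - x_0| < r/3 + \delta \le 2r/3$, so the Mean Value Theorem together with the hypothesis $\sup_{x\in Q_{x_0,r},\,|v|\le 1,\,v\in V}|D\Phi_x v|\le C$ gives $|\Phi(x_1+w)| \le C|w| < \tfrac{r}{3}||R||^{-1}(1-c)$. Moreover $Q_{x_1+w,\,r/3} \subseteq Q_{x_0,r}$ (any $z$ in the former satisfies $|z-x_0| < r/3 + 2r/3 = r$), so $\Phi$ restricted to $Q_{x_1+w,\,r/3}$ still satisfies the hypotheses of Proposition \ref{ift} with the same $R$ and $c$. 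Applying that proposition at $x_1+w$ with radius $r/3$ yields a point $x(w) = (x_1+w) + Ru(w) \in Q_{x_0,r}$ with $\Phi(x(w)) = 0$.

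The crucial observation is that the orthogonal projection $P_V$ onto $V$ annihilates $\operatorname{im}(R) = V^\perp$, so $P_V(x(w) - x_1) = P_V(w) = w$; in particular $w \mapsto x(w)$ is an injective map from $\{w\in V : |w| < \delta\}$ into $\{x\in Q_{x_0,r} : \Phi(x)=0\}$, and the $1$-Lipschitz map $x \mapsto P_V(x-x_1)$ carries its image onto $V \cap (-\delta,\delta)^n$. Since $1$-Lipschitz maps do not increase $\mathcal H^k$, this gives $\mathcal H^k(\{\Phi=0\}\cap Q_{x_0,r}) \ge \mathcal H^k(V\cap(-\delta,\delta)^n)$, and the latter is at least $\mathcal H^k$ of the Euclidean $\delta$-ball in $V$, namely $\omega_k\delta^k$ where $\omega_k$ is the volume of the unit ball in $\R^k$; unwinding $\delta$ and absorbing the numerical factors ($r/3$ versus $r/2$, $1/3$ versus $1/6$) into the constant yields the claim with, e.g., $c_n := \min_{1\le k\le n}\omega_k(2/3)^k$. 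I expect no serious obstacle: the argument is just Proposition \ref{ift} invoked twice. The only points needing care are the box nesting $Q_{x_1+w,\,r/3}\subseteq Q_{x_0,r}$, which is what forces the factor $r/3$, and the elementary fact that the $\ell^\infty$-box $V\cap(-\delta,\delta)^n$ contains a genuine Euclidean $\delta$-ball so that its $\mathcal H^k$ measure is controlled from below.
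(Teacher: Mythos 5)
Your proposal is correct and follows essentially the same route as the paper: bound $\Phi$ along directions in $V$ via the Mean Value Theorem, correct back to the zero set with Proposition \ref{ift} applied on a shrunken box, and lower-bound the $\mathcal H^k$ measure of the resulting graph by its projection onto $V$. The only differences are cosmetic (you first relocate to an exact zero $x_1$ and use radius $r/3$ instead of $r/2$, and you spell out the $1$-Lipschitz projection step that the paper merely asserts), and your constants are compatible with the stated bound.
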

\begin{proof}
The new hypothesis guarantees that $| \Phi(x_0 + v)  - \Phi(x_0)| \leq C |v|$ whenever $v \in V$ and $|v| < r$.
The proof is by the Mean Value Theorem as it just appeared: 
\[ \left| \ang{z,  \Phi(x_0 + v) - \Phi(x_0) }  \right| = \left| \ang{z, D \Phi_{x_*} v} \right| \]
for some $x_* \in Q_{x_0,r}$; applying the new hypothesis of this proposition and taking a supremum over $z$ gives $| \Phi(x_0  + v) - \Phi(x_0)| \leq C |v|$. 

Suppose now that $|\Phi(x_0)| < \frac{r}{3} ||R||^{-1} (1-c)$ as assumed in the statement of this proposition. For any $v \in V$ such that $|v| \leq \min\{\frac{r}{2}, \frac{r}{6 C} ||R||^{-1} (1-c) \}$,
\[ |\Phi(x_0 + v) - \Phi(x_0)| \leq C |v| \leq \frac{r}{6} ||R||^{-1} (1-c), \]
which means that $|\Phi(x_0 + v)| \leq |\Phi(x_0)| + \frac{r}{6} ||R||^{-1} (1-c) < \frac{r}{2} ||R||^{-1} (1-c)$.  Moreover, $Q_{x_0 + v, r/2} \subset Q_{x_0,r}$, so the previous proposition applies on the box with new center $x_0 + v$ and new radius $r/2$. This implies that there exists $u \in \R^n$ such that $\Phi(x_0 + v + R u) = 0$ and $|R u| \leq \frac{r}{2}$.
In other words, the zero set $\set{ x \in Q_{x_0,r}}{ \Phi(x) = 0}$ must contain a graph over the  $k$-dimensional set $\set{v \in V}{ |v| \leq \min\{\frac{r}{2}, \frac{r}{6 C} ||R||^{-1} (1-c) \} }$, which forces the graph to have $k$-dimensional Hausdorff measure at least as large as the $k$-dimensional Hausdorff measure of the parametrizing set. This establishes the conclusion of the proposition.
\end{proof}

\begin{lemma}
Suppose $\rho$ is a smooth defining function on some open set $\Omega$ of an incidence relation $\inc$. There exists some open set $\tilde \Omega \subset \Omega$ containing $\inc$ and another smooth defining function $\tilde \rho$ of $\inc$ such that the following hold: \label{specialdf}
\begin{enumerate}
\item At every point $(x,y) \in \inc$, the matrix $D_x \tilde \rho(x,y)$ has rows which are orthonormal vectors in $\R^n$.
\item At every point $(x,y) \in \inc$,
\[ 1 = \det D_x \tilde \rho (D_x \tilde \rho)^T \mbox{ and } \frac{ \det D_y \rho (D_y \rho)^T}{\det D_x \rho (D_x \rho)^T} = \det D_y \tilde \rho (D_y \tilde \rho)^T. \]
\item For every compact subset $K \subset \inc$, there is an open set $U \subset \tilde \Omega$ containing $K$ and a positive $\delta_0$ such that for any $(x,y) \in U$, $|\tilde \rho(x,y)| < \delta \kappa_n$ for any $\delta \leq \delta_0$ (where $\kappa_n$ is some fixed constant depending only on $n$) implies that
\[ \mathcal H^k ( Q_{x,\delta} \cap \ri{y} ) \geq c_n \delta^k \]
for some positive $c_n$ depending only on $n$.
\end{enumerate}
\end{lemma}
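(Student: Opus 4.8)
The plan is to obtain $\tilde\rho$ from $\rho$ by left multiplication by a symmetric positive-definite matrix field chosen so as to orthonormalize the rows of the left derivative matrix along $\inc$. Concretely, let
\[
\tilde\Omega := \set{(x,y) \in \Omega}{ D_x\rho(x,y)(D_x\rho(x,y))^T \text{ and } D_y\rho(x,y)(D_y\rho(x,y))^T \text{ are positive definite}},
\]
which is open and contains $\inc$ by the full-rank hypothesis on $\rho$, and set $\tilde\rho(x,y) := M(x,y)\rho(x,y)$ with $M(x,y) := \bigl(D_x\rho(x,y)(D_x\rho(x,y))^T\bigr)^{-1/2}$. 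The inverse square root of a smooth positive-definite symmetric matrix field is smooth, so $\tilde\rho$ is smooth on $\tilde\Omega$; since $M$ is everywhere invertible, $\tilde\rho^{-1}(0) \cap \tilde\Omega = \inc$, so $\tilde\rho$ is again a defining function for $\inc$ on $\tilde\Omega$ (shrinking $\tilde\Omega$ harmlessly if needed to keep it a manageable neighborhood).

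For properties (1) and (2), the crucial observation is that along $\inc$ the product rule collapses: $D_x\tilde\rho = M\,D_x\rho$ and $D_y\tilde\rho = M\,D_y\rho$, since the terms carrying derivatives of $M$ are multiplied by $\rho = 0$. Hence $D_x\tilde\rho(D_x\tilde\rho)^T = M\,(D_x\rho(D_x\rho)^T)\,M^T = I$ by definition of $M$, which gives (1) and the first identity in (2); for the second, taking determinants in $D_y\tilde\rho(D_y\tilde\rho)^T = M(D_y\rho(D_y\rho)^T)M^T$ and using $(\det M)^2 = (\det D_x\rho(D_x\rho)^T)^{-1}$ yields $\det D_y\tilde\rho(D_y\tilde\rho)^T = \det D_y\rho(D_y\rho)^T / \det D_x\rho(D_x\rho)^T$. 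Invertibility of $M$ also shows $D_x\tilde\rho$ and $D_y\tilde\rho$ have full rank along $\inc$.

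Property (3) I would deduce from Propositions \ref{ift} and \ref{ift2}, applied for fixed $(x,y)$ to $\Phi := \tilde\rho(\cdot,y)$ on the box $Q_{x,\delta}$ with the choices $R := (D_x\tilde\rho(x,y))^T$ and $V := (\text{row space of } D_x\tilde\rho(x,y))^\perp = \ker D_x\tilde\rho(x,y)$. The quantity $\|D_x\tilde\rho(x',y)(D_x\tilde\rho(x,y))^T - I\|$ is continuous in $(x',x,y)$ and vanishes whenever $x' = x$ and $(x,y) \in K$ (there $D_x\tilde\rho$ has orthonormal rows), so by uniform continuity on a compact neighborhood there are an open $U \supset K$ and a $\delta_0 > 0$ (both allowed to depend on $K$, hence on $\rho$) with $\sup_{x'\in Q_{x,\delta}}\|D_x\tilde\rho(x',y)R - I\| \le \tfrac12$ for all $(x,y)\in U$ and all $\delta \le \delta_0$, the boxes staying inside $\tilde\Omega$. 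Shrinking $U$ further makes $\|R\|$ and $\sup_{v\in V,\,|v|\le1}|D_x\tilde\rho(\cdot,y)v|$ bounded by a constant depending only on $n$ — this is the entire point of the normalization, since a matrix with orthonormal rows has operator norm controlled by $n$ alone. Choosing the dimensional constant $\kappa_n$ below $\tfrac13\|R\|^{-1}(1-\tfrac12)/\delta$, the hypothesis $|\tilde\rho(x,y)| < \delta\kappa_n$ lets Proposition \ref{ift2} apply with $r = \delta$ and $c = \tfrac12$, giving
\[
\mathcal H^k(Q_{x,\delta}\cap\ri{y}) = \mathcal H^k(\set{x'\in Q_{x,\delta}}{\tilde\rho(x',y)=0}) \ge c_n\,\delta^k\Bigl(\min\bigl\{\tfrac12,\tfrac{1}{12 C\|R\|}\bigr\}\Bigr)^k \ge c_n'\,\delta^k
\]
with $c_n'$ depending only on $n$, using $\tilde\rho(\cdot,y)^{-1}(0) = \ri{y}$ by invertibility of $M$. (When $k = 0$ one instead invokes Proposition \ref{ift} directly to place a point of $\ri{y}$ in $Q_{x,\delta}$.)

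The routine ingredients — smoothness of the matrix inverse square root, the product-rule computation, and the invertibility bookkeeping — cause no difficulty. The one delicate point is the stratification of dependencies in part (3): the neighborhood $U$ and scale $\delta_0$ may depend on the compact set $K$, but $\kappa_n$ and $c_n$ must depend on $n$ alone, which forces every quantitative bound feeding into Propositions \ref{ift} and \ref{ift2} to be extracted purely from the orthonormality of the rows of $D_x\tilde\rho$ along $\inc$, together with a single compactness and uniform-continuity step. Arranging this cleanly is the main obstacle.
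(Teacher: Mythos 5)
Your proposal matches the paper's proof essentially step for step: the same normalization $\tilde\rho = (D_x\rho(D_x\rho)^T)^{-1/2}\rho$, the same product-rule collapse along $\inc$ for properties (1) and (2), and the same compactness/uniform-continuity reduction to Propositions \ref{ift} and \ref{ift2} with $c=C=\tfrac12$ and $\|R\|^{-1}$ bounded below by a dimensional constant coming from orthonormality of the rows. The only cosmetic differences (taking $R$ at the point of $U$ rather than at the nearby point of $K$, and the stray $\delta$ in your choice of $\kappa_n$) do not affect the argument.
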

\begin{proof}
For any real symmetric positive-definite matrix $A$, let $A^{-1/2}$ be the matrix such that every eigenvector $e$ of $A$ with eigenvector $\lambda > 0$ of $A$ is also an eigenvector with eigenvalue $\lambda^{-1/2}$ of $A^{-1/2}$.
It is relatively easy to see that the mapping $A \mapsto A^{-1/2}$ is a smooth function of $A$; the standard way to see this is to use the identity
\[ A^{-1/2} = \frac{1}{2 \pi i} \int_{\gamma} z^{1/2} (z I  - A)^{-1} dz \]
where $z^{1/2}$ is a branch of the square root on the right half space ${ \mathop{Re} z > 0}$ which equals the positive square root on the real axis and $\gamma$ is, for example, a closed circular contour in the right half space which encloses all eigenvalues of $A$.

Let $\tilde \Omega \subset \Omega$ be the neighborhood of $\inc$ on which $\det D_x \rho (D_x \rho)^T > 0$; the function
\[ \tilde \rho (x,y) := (D_x \rho (D_x \rho)^T)^{-1/2} \rho(x,y) \]
is well-defined and smooth on $\tilde \Omega$ provided that $\rho$ is smooth. This mapping $\tilde \rho$ vanishes if and only if $\rho$ vanishes (so that $\inc$ is also the set of points $(x,y)$ where $\tilde \rho(x,y) = 0$), and by the product rule, $D_x \tilde \rho = (D_x \rho (D_x \rho)^T)^{-1/2} D_x \rho$ at all points of $\inc$ (since all terms in which derivatives fall on $(D_x \rho (D_x \rho)^T)^{-1/2}$ vanish because $\rho$ vanishes). This implies that $D_x \tilde \rho (D_x \tilde \rho)^T$ is the identity matrix at all points $(x,y) \in \inc$, which means that the rows of $D_x \tilde \rho$ are mutually orthogonal unit vectors when $(x,y) \in \inc$.  The formula for $\det D_y \tilde \rho (D_y \tilde \rho)^T$ also follows directly from the definition of $\tilde \rho$.

Now fix any compact subset $K \subset \inc$.  Because $K$ is compact, there must exist some $r > 0$ such that $Q_{x_0,3r} \times Q_{y_0,3r} \subset \tilde \Omega$ for any $(x_0,y_0) \in K$. 
It may further be assumed (after possibly reducing the value of $r$) that
\[ || D_x \tilde \rho(x',y') (D_x \tilde \rho(x_0,y_0))^T - I || < \frac{1}{2} \]
and 
\[ | D_x \tilde \rho(x',y') v | \leq \frac{1}{2} \mbox{ for all } v \in \ker D_x \tilde \rho(x_0,y_0) \mbox{ such that } |v| \leq 1 \]
whenever $(x_0,y_0) \in K$ and $(x',y') \in \tilde \Omega$ are any points that satisfy $|x_0 - x'| < 2r$ and $|y_0 - y'| < 2r$
(simply because the quantities on the left-hand sides of these inequalities will be identically zero when $(x_0,y_0) = (x',y')$  and are continuous functions on compact sets, so are consequently uniformly continuous). 

Now suppose $U$ is the open set of pairs $(x,y)$ such that $|x-x_0| < r$ and $|y-y_0| < r$ for some $(x_0,y_0) \in K$. For any $(x,y) \in U$, fixing $R := (D_x \tilde \rho(x_0,y_0))^T$ gives that %$||R||^{-1} \leq 1$ and that
\[ \sup_{x' \in Q_{x,\delta}} || D_x \rho(x',y) R - I || \leq \frac{1}{2} \mbox{ and } \mathop{\sup_{x' \in Q_{x,\delta}}}_{|v| \leq 1, v \in \ker R^T} | D_x \rho(x',y) v| \leq \frac{1}{2} \]
for any $\delta < r$. Because $R$ consists of orthonormal columns, there must be a constant $\kappa_n' > 0$ depending only on $n$ such that $||R||^{-1} \geq \kappa_n'$.
By Propositions \ref{ift} and \ref{ift2} (taking $c = C = \frac{1}{2}$)
It follows that
\[ |\tilde \rho(x,y)| < \frac{\delta \kappa_n'}{6} \Rightarrow \mathcal H^k( Q_{x,\delta} \cap \ri{y}) \geq c_n \delta^k. \]
The lemma is complete by simply fixing $\delta_0 := r$ and $\kappa_n := \kappa'_n/6$.
\end{proof}

\bibliography{mybib}

\end{document}